
\documentclass[a4paper]{amsart}

\oddsidemargin-6mm
\evensidemargin-6mm
\topmargin-8mm
\textwidth17cm
\textheight25cm

\usepackage[english]{babel} 
\usepackage{amsmath,amsthm,amsfonts,amssymb,graphicx,units,color}


\newcommand{\comment}[1]{}


\theoremstyle{plain}
\newtheorem{theo}{Theorem}[section]
\newtheorem{lem}[theo]{Lemma}

\newtheorem{cor}[theo]{Corollary}
\newtheorem{rem}[theo]{Remark}

\theoremstyle{definition}

\numberwithin{equation}{section}


\def\reals{\mathbb{R}}
\def\realsnotz{\mathbb{R}\setminus\{0\}}

\def\p{\partial}

\def\Amat{\alpha}
\def\L{\Lambda}

\def\elems{\mathcal{T}}
\def\elem{T}

\def\ul{\underline}
\def\ol{\overline}
\def\ubr{\underbrace}

\def\ds{\displaystyle}


\newcommand{\set}[2]{\{#1\,\mid\,#2\}}


\DeclareMathOperator{\Max}{M}
\DeclareMathOperator{\A}{A}

\def\As{\A^{*}}

\DeclareMathOperator{\ed}{d}

\def\na{\nabla}
\def\nas{\na_{\!\!\mathrm{s}}\,}
\DeclareMathOperator{\rot}{rot}

\DeclareMathOperator{\opdiv}{div}
\def\div{\opdiv}
\DeclareMathOperator{\Div}{Div}
\def\Divs{\Div_{\mathrm{s}}\,}
\DeclareMathOperator{\id}{id}
\DeclareMathOperator{\sym}{sym}

\DeclareMathOperator{\R}{Q}

\DeclareMathOperator{\sign}{sign}


\def\om{\Omega}
\def\ga{\Gamma}
\def\gad{\ga_{\mathtt{D}}}
\def\gan{\ga_{\mathtt{N}}}
\def\gar{\ga_{\mathtt{R}}}
\def\eps{\epsilon}







\def\ft{\tilde{f}}
\def\ut{\tilde{u}}
\def\pt{\tilde{p}}
\def\xt{\tilde{x}}
\def\yt{\tilde{y}}
\def\Et{\tilde{E}}
\def\Ht{\tilde{H}}

\def\alphao{\alpha_{1}}
\def\alphat{\alpha_{2}}


\def\M{\mathcal{M}}
\def\Mi{\mathcal{M}_{i}}
\def\Mrd{\M_{\mathrm{rd}}}
\def\Mrdi{\M_{i,\mathrm{rd}}}
\def\Mec{\M_{\mathrm{ec}}}
\def\Meci{\M_{i,\mathrm{ec}}}
\def\Mle{\M_{\mathrm{le}}}
\def\Mlei{\M_{i,\mathrm{le}}}

\def\R{\mathcal{I}}
\def\Ri{\mathcal{I}_{i}}
\def\Rrd{\R_{\mathrm{rd}}}
\def\Rrdi{\R_{i,\mathrm{rd}}}



\DeclareMathOperator{\hilbert}{\sf H}
\DeclareMathOperator{\cont}{\sf C}
\DeclareMathOperator{\lebesgue}{\sf L}
\DeclareMathOperator{\rotation}{\sf R}
\DeclareMathOperator{\divergence}{\sf D}


\def\hio{\hilbert_{1}}
\def\hit{\hilbert_{2}}



\newcommand{\cgen}[2]{\cont^{#1}_{#2}}

\def\cigad{\cgen{\infty}{\gad}}
\def\cigan{\cgen{\infty}{\gan}}


\newcommand{\lgen}[2]{\lebesgue^{#1}_{#2}}
\def\lt{\lgen{2}{}}
\def\li{\lgen{\infty}{}}



\newcommand{\hgen}[2]{\hilbert^{#1}_{#2}}
\def\ho{\hgen{1}{}}

\def\hogad{\hgen{1}{\gad}}
\def\hogan{\hgen{1}{\gan}}
\def\htwo{\hgen{2}{}}

\def\hoh{\hgen{\nicefrac{1}{2}}{}}

\def\hmoh{\hgen{-\nicefrac{1}{2}}{}}


\newcommand{\dgen}[2]{\divergence^{#1}_{#2}}
\def\d{\dgen{}{}}

\def\dgan{\dgen{}{\gan}}


\newcommand{\rgen}[2]{\rotation^{#1}_{#2}}
\def\r{\rgen{}{}}

\def\rgad{\rgen{}{\gad}}

\def\rgan{\rgen{}{\gan}}







\newcommand{\norm}[1]{|#1|}

\newcommand{\normhio}[1]{\norm{#1}_{\hio}}

\newcommand{\normhioao}[1]{\norm{#1}_{\hio,\alphao}}

\newcommand{\normhioaomo}[1]{\norm{#1}_{\hio,\alphao^{-1}}}

\newcommand{\normhitat}[1]{\norm{#1}_{\hit,\alphat}}

\newcommand{\normhitatmo}[1]{\norm{#1}_{\hit,\alphat^{-1}}}

\newcommand{\normdaidaoat}[1]{\norm{#1}_{D(\A),\alphao,\alphat}}

\newcommand{\normdasaomoatmo}[1]{\norm{#1}_{D(\As),\alphao^{-1},\alphat^{-1}}}

\newcommand{\normhioomao}[1]{\norm{#1}_{\hio,\norm{\omega}\alphao}}
\newcommand{\normdaidomaoat}[1]{\norm{#1}_{D(\A),\norm{\omega}\alphao,\alphat}}
\newcommand{\normhioomaomo}[1]{\norm{#1}_{\hio,(\norm{\omega}\alphao)^{-1}}}
\newcommand{\normdasomaomoatmo}[1]{\norm{#1}_{D(\As),(\norm{\omega}\alphao)^{-1},\alphat^{-1}}}

\newcommand{\tnorm}[1]{\|#1\|}	
\newcommand{\ttnorm}[1]{|\!|\!| #1 |\!|\!|}	

\newcommand{\normlt}[1]{\norm{#1}_{\lt}}	
\newcommand{\normho}[1]{\norm{#1}_{\ho}}	
\newcommand{\normd}[1]{\norm{#1}_{\d}}


\newcommand{\scp}[2]{\langle#1,#2\rangle}

\newcommand{\scphio}[2]{\scp{#1}{#2}_{\hio}}

\newcommand{\scphioao}[2]{\scp{#1}{#2}_{\hio,\alphao}}
\newcommand{\scphioaomo}[2]{\scp{#1}{#2}_{\hio,\alphao^{-1}}}
\newcommand{\scphit}[2]{\scp{#1}{#2}_{\hit}}

\newcommand{\scphitat}[2]{\scp{#1}{#2}_{\hit,\alphat}}
\newcommand{\scphitatmo}[2]{\scp{#1}{#2}_{\hit,\alphat^{-1}}}

\newcommand{\scphioomao}[2]{\scp{#1}{#2}_{\hio,\omega\alphao}}
\newcommand{\scphioomaomo}[2]{\scp{#1}{#2}_{\hio,(\omega\alphao)^{-1}}}

\newcommand{\scplt}[2]{\scp{#1}{#2}_{\lt}}


%


\title[\sc Functional A Posteriori Error Equalities]
{\Large\sf Functional A Posteriori Error Control for Conforming Mixed Approximations
of Coercive Problems with Lower Order Terms}
\author{Immanuel Anjam}
\author{Dirk Pauly}
\address{Fakult\"at f\"ur Mathematik,
Universit\"at Duisburg-Essen, Campus Essen, Germany}
\email[Immanuel Anjam]{immanuel.anjam@uni-due.de}
\email[Dirk Pauly]{dirk.pauly@uni-due.de}
\keywords{functional a posteriori error estimates, error equalities, 
mixed formulations, combined norm}
\subjclass{65N15}
\date{\today}
\thanks{The first author thanks Emil Aaltonen Foundation for support.}

\begin{document}

\begin{abstract}
The results of this contribution are derived in the framework of functional type
a posteriori error estimates.
The error is measured in a combined norm which takes into account both
the primal and dual variables denoted by $x$ and $y$, respectively.
Our first main result is an error equality for all equations of the class
$$\As\A x+x=f\qquad
\text{or in mixed formulation}\qquad
\As y+x=f,\quad\A x=y,$$
where $\A$ is a linear, densely defined and closed (usually a differential)
operator and $\As$ its adjoint. In order to obtain the exact global error value 
of a conforming mixed approximation one only needs the problem data and
the mixed approximation $(\xt,\yt)\in D(\A)\times D(\As)$
of the exact solution $(x,y)\in D(\A)\times D(\As)$, i.e.,
we have the \ul{e}q\ul{ualit}y
$$\norm{x-\xt}^2
+\norm{\A(x-\xt)}^2
+\norm{y-\yt}^2
+\norm{\As(y-\yt)}^2
=\M(\xt,\yt),$$
where 
$$\M(\xt,\yt)
:=\norm{f-\xt-\As\yt}^2+\norm{\yt-\A\xt}^2$$ 
contains only known data.
Our second main result is an error estimate for all
equations of the class
$$\As\A x+ix=f\qquad
\text{or in mixed formulation}\qquad
\As y+ix=f,\quad\A x=y,$$
where $i$ is the imaginary unit. For this problem we have the \ul{two-sided} \ul{estimate}
$$\frac{\sqrt{2}}{\sqrt{2}+1} \M_i(\xt,\yt)
\leq
\norm{x-\xt}^2
+\norm{\A(x-\xt)}^2
+\norm{y-\yt}^2
+\norm{\As(y-\yt)}^2
\leq 
\frac{\sqrt{2}}{\sqrt{2}-1} \M_i(\xt,\yt),$$
where 
$$\M_i(\xt,\yt)
:=\norm{f-i\xt-\As\yt}^2 + \norm{\yt-\A\xt}^2$$ 
contains only known data.
We will point out a motivation for the study of the latter problems by time discretizations 
of linear partial differential equations and we will present an extensive list of applications.
\end{abstract}



\maketitle
\tableofcontents
\newpage


\section{Introduction}

The results presented in this paper are based on the conception of functional type a posteriori error control. 
Often these type estimates are valid for any conforming approximation and contain only global constants. 
In the case of the class of problems studied in this paper the results do not contain even global constants,
just fixed numbers. 
For a detailed exposition of the theory see the books by Repin, Neittaanm\"aki, and Mali 
\cite{repinbookone,NeittaanmakiRepin2004,MaliRepinNeittaanmaki2014} 
for a more computational point of view.

In this paper we will consider only conforming approximations, 
and we will measure the error of our approximations in a combined norm, 
which includes the error of both the primal and the dual variable. 
This is especially useful for mixed methods where one calculates an approximation 
for both the primal and dual variables, see e.g. the book 
of Brezzi and Fortin \cite{brezzifortinbookone}. 
We call this approximation pair a mixed approximation.

To the best of the authors' knowledge
functional a posteriori error estimates for combined norms were first exposed in the paper 
\cite{repinsautersmolianskiaposttwosideell}, where the authors present 
two-sided estimates bounding the error by the same quantity from below 
and from above aside from basic and global Poincar\'e type constants and some special numbers. 
In \cite{repinsautersmolianskiaposttwosideell} the authors studied real valued 
elliptic problems of the type $\As\alpha\A x=f$ given in mixed formulations $\As y=f, \alpha\A x = y$.

The first class of problems we study in the paper at hand is the linear equation 
$$(\As\alphat\A+\alphao)x=f$$
presented in the mixed formulation
\begin{equation} 
\label{eq:case1}
\As y+\alphao x=f,\quad
\alphat\A x=y,
\end{equation}
where $\alphao,\alphat$ are linear, self adjoint, and positive topological isomorphisms 
on two complex Hilbert spaces $\hio$ and $\hit$, and $\A:D(\A)\subset\hio\to\hit$ is a linear, 
densely defined, and closed operator with adjoint operator $\As:D(\As)\subset\hit\to\hio$. 
Throughout this paper we will refer to the class of problems represented by \eqref{eq:case1} 
as `Case I' in section headings. Our first main result is Theorem \ref{thm:Gmain} 
and it shortly reads as the functional a posteriori error equality
\begin{align*}
&\qquad\normhioao{x-\xt}^2
+\normhitat{\A(x-\xt)}^2
+\normhitatmo{y-\yt}^2
+\normhioaomo{\As(y-\yt)}^2\\
&=\normhioaomo{f-\alphao\xt-\As\yt}^2
+\normhitatmo{\yt-\alphat\A\xt}^2
\end{align*}
being valid for any conforming mixed approximation pair $(\xt,\yt)\in D(\A)\times D(\As)$ 
of the exact solution pair $(x,y)\in D(\A)\times D(\As)$. 
In the purely real case this result can also be derived as a special case 
of the very general result \cite[(7.2.14)]{NeittaanmakiRepin2004} 
in the context of the dual variational technique. However, we proof this result here 
by elementary methods in a general Hilbert space setting. 
Our results hold then also for the complex case. 
The equality for the purely real reaction-diffusion equation 
($\A=\nabla$, $\As=-\div$), 
was found also by Cai and Zhang \cite[Remark 6.12]{caiestimators}
and has been used for error indication of the primal variable.

The second class of problems we study in this paper is the linear equation
$$(\As\alphat\A+i\omega\alphao)x=f$$
presented in the mixed formulation
\begin{equation} 
\label{eq:case2}
\As y+i\omega\alphao x=f,\quad
\alphat\A x=y,
\end{equation}
where $\omega\in\realsnotz$. Throughout this paper we will refer 
to the class of problems represented by \eqref{eq:case2} 
as `Case II' in section headings. Our second main result is 
Theorem \ref{thm:Gmain2} and it shortly reads 
as the two-sided functional a posteriori error estimate
\begin{align*}
&\qquad\frac{\sqrt{2}}{\sqrt{2}+1} 
\left(\normhioomaomo{f-i\omega\alphao\xt-\As\yt}^2
+\normhitatmo{\yt-\alphat\A\xt}^2\right)\\
&\leq\normhioomao{x-\xt}^2+\normhitat{\A(x-\xt)}^2
+\normhitatmo{y-\yt}^2+\normhioomaomo{\As(y-\yt)}^2\\
&\leq\frac{\sqrt{2}}{\sqrt{2}-1}
\left(\normhioomaomo{f-i\omega\alphao\xt-\As\yt}^2
+\normhitatmo{\yt-\alphat\A\xt}^2\right)
\end{align*}
being valid for any conforming mixed approximation pair $(\xt,\yt)\in D(\A)\times D(\As)$ 
of the exact solution pair $(x,y)\in D(\A)\times D(\As)$. 
Note that the square root of the ratio of the upper and lower bound 
is always $1+\sqrt{2} < 2.42$, 
so the estimate gives reliable information of the combined error value. 
To the authors' best knowledge this result is new.

A motivation to study these problems comes from time-dependent PDEs. 
For many problems, if the time-derivative is discretized with `finite differences',
e.g., the backward Euler scheme, then on each time-step one solves 
a static problem of the type \eqref{eq:case1}. On the other hand, 
many time-dependent problems, e.g., the eddy current problem, can be approximated 
by a series resp. sum of static complex valued problems of the kind \eqref{eq:case2} 
by using multifrequency analysis, e.g., Fourier transformation. 
We elaborate on this in Section \ref{subsec:timediscr}.

The paper is organized as follows. In Section \ref{sec:G} 
we derive our main results in an abstract Hilbert space setting 
and in Section \ref{sec:A} we show applications of the general results 
to several partial differential equations.


\section{Results in the General Setting} \label{sec:G}

In this section we derive our main results in an abstract Hilbert space setting, 
which allows for mixed boundary conditions as well as coefficients for the case, where the
underlying problem is a PDE. 

Let ${\hio}$ and ${\hit}$ be two complex Hilbert spaces 
with the inner products $\scphio{\,\cdot\,}{\,\cdot\,}$ 
and $\scphit{\,\cdot\,}{\,\cdot\,}$, respectively. 
The right hand side $f$ belongs to ${\hio}$.
Let $\A:D(\A)\subset\hio\to\hit$ be a densely defined and closed linear operator
and $\As:D(\As)\subset\hit\to\hio$ its adjoint. We note $\A^{**}=\bar{\A}=\A$ and 
\begin{align}
\label{partint}
\forall\,\varphi\in D(\A)\quad\forall\,\psi\in D(\As)\qquad
\scphit{\A\varphi}{\psi}=\scphio{\varphi}{\As\psi}.
\end{align}
Equipped with the natural graph norms $D(\A)$ and $D(\As)$ are Hilbert spaces. 
Furthermore, we introduce two linear, self adjoint, and positive topological isomorphisms 
$\alphao:{\hio}\to{\hio}$ and $\alphat:{\hit}\to{\hit}$. Especially we have
$$\exists\,c>0\quad\forall\,\varphi\in\hio\qquad 
c^{-1}\normhio{\varphi}^2\leq\scphio{\alphao\varphi}{\varphi}\leq c\normhio{\varphi}^2$$
and the corresponding holds for $\alphat$.
In case the underlying problem is a PDE, the operators $\alpha_1$ and $\alpha_2$
describe material properties, and are often called material coefficients,
giving the constitutive laws.

For any inner product and corresponding norm we introduce weighted counterparts with sub-index notation. 
As an example, for elements from ${\hio}$ we define a new inner product 
$\scphioao{\,\cdot\,}{\,\cdot\,}:=\scphio{\alphao\,\cdot\,}{\,\cdot\,}$ 
and a new induced norm $\normhioao{\,\cdot\,}$.
Note that in Section \ref{subsec:Gcase2} we slightly abuse this notation: We use
$\scphioomao{\,\cdot\,}{\,\cdot\,}=\scphio{\omega\alphao\,\cdot\,}{\,\cdot\,}$,
where $\omega\neq0$ is possibly a negative real number. 
Clearly, this sesquilinear form neither defines an inner product nor a norm,
if $\omega$ is negative.



\subsection{Case I: Error Equality for Coefficients $\alphao$ and $\alphat$}
\label{subsec:Gcase1}

Extending the sub-index notation we define for $\varphi\in D(\A)$ and $\psi\in D(\As)$ new weighted norms
on $D(\A)$, $D(\As)$ and on the product space $D(\A)\times D(\As)$ by
\begin{align*}
\normdaidaoat{\varphi}^2
&:=\normhioao{\varphi}^2
+\normhitat{\A\varphi}^2,\\
\normdasaomoatmo{\psi}^2
&:=\normhitatmo{\psi}^2
+\normhioaomo{\As\psi}^2,\\
\tnorm{(\varphi,\psi)}^2
&:=\normdaidaoat{\varphi}^2
+\normdasaomoatmo{\psi}^2.
\end{align*}

By the Lax-Milgram's lemma
(or by Riesz' representation theorem) we get immediately:

\begin{lem}
\label{laxmilgramA}
The (primal) variational problem
\begin{align}
\label{varA}
\forall\,\varphi\in D(\A)\qquad
\scphitat{\A x}{\A\varphi}+\scphioao{x}{\varphi}=\scphio{f}{\varphi}
\end{align}
admits a unique solution $x\in D(\A)$ satisfying $\normdaidaoat{x}\leq\normhioaomo{f}$. 
Moreover, $y_{x}:=\alphat\A x$ belongs to $D(\As)$ and $\As y_{x}=f-\alphao x$.
Hence, the strong and mixed formulations
\begin{align} 
\label{eq:Gstrong}
\As\alphat\A x+\alphao x&=f,\\
\label{mixedformulation}
\As y_{x}+\alphao x&=f,\quad\alphat\A x=y_{x}
\end{align}
hold with $(x,y_{x})\in D(\A)\times \big(D(\As)\cap\alphat R(\A)\big)$.
\end{lem}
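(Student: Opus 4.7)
The plan is to recognize the variational problem \eqref{varA} as a direct application of the Riesz representation theorem (equivalently, the symmetric Lax--Milgram lemma) on the graph space $D(\A)$, and then to read off the mixed formulation from the definition of the adjoint $\As$.

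First I would observe that the left-hand side of \eqref{varA},
$B(\varphi,\psi) := \scphitat{\A\varphi}{\A\psi} + \scphioao{\varphi}{\psi}$,
is precisely the Hermitian inner product inducing the weighted graph norm $\normdaidaoat{\,\cdot\,}$. Since $\alphao$ and $\alphat$ are self-adjoint positive topological isomorphisms on $\hio$ and $\hit$, this weighted norm is equivalent to the ordinary graph norm of $\A$, so $\bigl(D(\A),B\bigr)$ is a complex Hilbert space. The right-hand side $\varphi \mapsto \scphio{f}{\varphi}$ is a continuous anti-linear functional on $D(\A)$: rewriting it as $\scphio{f}{\varphi} = \scphioao{\alphao^{-1}f}{\varphi}$ and applying the Cauchy--Schwarz inequality in the $\alphao$-weighted inner product yields
$|\scphio{f}{\varphi}| \leq \normhioaomo{f}\,\normhioao{\varphi} \leq \normhioaomo{f}\,\normdaidaoat{\varphi}$.
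The Riesz representation theorem then delivers a unique $x\in D(\A)$ solving \eqref{varA}, and testing with $\varphi=x$ immediately yields the asserted bound $\normdaidaoat{x}\leq\normhioaomo{f}$.

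For the mixed and strong formulations I would rewrite \eqref{varA} as
$\scphit{\alphat\A x}{\A\varphi} = \scphio{f-\alphao x}{\varphi}$
for every $\varphi\in D(\A)$. The right-hand side is a continuous anti-linear functional of $\varphi$ in the $\hio$-norm with representing element $f-\alphao x\in\hio$; by the very definition of the Hilbert space adjoint (cf.\ \eqref{partint}) this is exactly the statement that $y_x:=\alphat\A x$ lies in $D(\As)$ with $\As y_x = f-\alphao x$, i.e.\ \eqref{mixedformulation}. Substituting $y_x$ back into the first relation recovers the strong equation \eqref{eq:Gstrong}.

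I do not anticipate a genuine obstacle in this proof. The only point requiring a little care is the book-keeping with the $\alphao$- and $\alphat$-weights so that the continuity constant of $\scphio{f}{\,\cdot\,}$ on $\bigl(D(\A),B\bigr)$ comes out exactly as $\normhioaomo{f}$ with multiplicative constant one; this sharp form of the bound is dictated by the choice of the weighted norm $\normdaidaoat{\,\cdot\,}$ on $D(\A)$ and is what will matter for the subsequent error identities.
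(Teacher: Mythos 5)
Your proposal is correct and follows essentially the same route as the paper, which simply invokes the Lax--Milgram lemma (or Riesz representation) on $D(\A)$ equipped with the weighted graph inner product and then reads off $y_x=\alphat\A x\in D(\As)$ with $\As y_x=f-\alphao x$ from the definition of the adjoint; your write-up merely makes explicit the weight book-keeping that gives the sharp constant $\normhioaomo{f}$. No gaps.
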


To get the dual problem, we multiply the first equation of \eqref{mixedformulation}
by $\As\psi$ with $\psi\in D(\As)$ 
taking the right weighted scalar product and use $y_{x}=\alphat\A x\in D(\As)$.
We obtain
$$\scphioaomo{\As y_{x}}{\As\psi}
+\scphioaomo{\alphao x}{\As\psi}
=\scphioaomo{f}{\As\psi}.$$
Since $x\in D(\A)$,
$$\scphioaomo{\alphao x}{\As\psi}
=\scphio{x}{\As\psi}
=\scphit{\A x}{\psi}
=\scphitatmo{y_{x}}{\psi}$$
holds, and we get again by the Lax-Milgram's lemma:

\begin{lem}
\label{laxmilgramAs}
The (dual) variational problem
\begin{align}
\label{varAs}
\forall\,\psi\in D(\As)\qquad
\scphioaomo{\As y}{\As\psi}
+\scphitatmo{y}{\psi}
=\scphioaomo{f}{\As\psi}
\end{align}
admits a unique solution $y\in D(\As)$ satisfying $\normdasaomoatmo{y}\leq\normhioaomo{f}$.
Moreover, $y=y_{x}$ holds and thus $y$ even belongs to 
$D(\As)\cap\alphat R(\A)$ with $x$ and $y_{x}$ from Lemma \ref{laxmilgramA}.
Furthermore, $\alphao^{-1}(\As y-f)\in D(\A)$ with
$A\alphao^{-1}(\As y-f)=-\alphat^{-1}y$.
\end{lem}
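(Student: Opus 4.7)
The plan is to apply the Lax-Milgram lemma (or equivalently Riesz' representation theorem) on the Hilbert space $D(\As)$ equipped with the weighted graph norm $\normdasaomoatmo{\,\cdot\,}$. Consider the sesquilinear form $a(y,\psi):=\scphioaomo{\As y}{\As\psi}+\scphitatmo{y}{\psi}$ on $D(\As)\times D(\As)$ and the antilinear form $\ell(\psi):=\scphioaomo{f}{\As\psi}$. Because $\alphao^{-1}$ and $\alphat^{-1}$ are again self adjoint positive topological isomorphisms, the weighted inner products are genuine inner products and $a$ literally equals the graph inner product inducing $\normdasaomoatmo{\,\cdot\,}$. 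Hence continuity and coercivity are immediate with constant one, $a(y,y)=\normdasaomoatmo{y}^2$, and $|\ell(\psi)|\leq\normhioaomo{f}\,\normhioaomo{\As\psi}\leq\normhioaomo{f}\,\normdasaomoatmo{\psi}$, so $\ell\in D(\As)'$. Lax-Milgram yields a unique $y\in D(\As)$ solving \eqref{varAs}, and testing with $\psi=y$ gives $\normdasaomoatmo{y}^2=\ell(y)\leq\normhioaomo{f}\,\normdasaomoatmo{y}$, hence the asserted bound.

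Next I identify $y$ with $y_x$ from Lemma \ref{laxmilgramA}. The computation preceding the lemma in the excerpt has already verified this: starting from $\As y_x+\alphao x=f$ and $y_x=\alphat\A x\in D(\As)$, multiplying by $\As\psi$ in the $\alphao^{-1}$-weighted inner product, and using $x\in D(\A)$ together with partial integration \eqref{partint} shows that $y_x$ satisfies \eqref{varAs}. By the uniqueness just established, $y=y_x$, which places $y$ in $D(\As)\cap\alphat R(\A)$.

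For the last assertion, Lemma \ref{laxmilgramA} gives $\As y=\As y_x=f-\alphao x$, so $\alphao^{-1}(\As y-f)=-x\in D(\A)$ is immediate, and applying $\A$ yields $\A\alphao^{-1}(\As y-f)=-\A x=-\alphat^{-1}(\alphat\A x)=-\alphat^{-1}y_x=-\alphat^{-1}y$, as claimed. There is no genuine obstacle here; the proof is a transparent application of Lax-Milgram once one notices that the weighted graph norm on $D(\As)$ \emph{is} precisely the one induced by the bilinear form of the dual problem, so coercivity and continuity are tautological. The only care needed is in tracking the sesquilinearity conventions in the complex setting and in observing that the $\alphao^{-1}$- and $\alphat^{-1}$-weighted inner products inherit positivity from the hypotheses on $\alphao$ and $\alphat$.
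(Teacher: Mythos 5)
Your proof is correct and follows essentially the same route as the paper: Lax--Milgram (with the tautological coercivity of the weighted graph inner product) gives existence, uniqueness and the bound, and the identification $y=y_{x}$ comes from verifying that $y_{x}$ solves \eqref{varAs} via the computation preceding the lemma. The only cosmetic difference is in the final assertion, where you read off $\alphao^{-1}(\As y-f)=-x\in D(\A)$ directly from $y=y_{x}$, while the paper phrases the same fact through $\A^{**}=\A$; both are immediate.
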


\begin{proof}
We just have to show that $y_{x}\in D(\As)$ solves \eqref{varAs}.
But this follows directly since for all $\psi\in D(\As)$
\begin{align*}
\scphioaomo{\As y_{x}}{\As\psi}
&=-\scphio{x}{\As\psi}
+\scphioaomo{f}{\As\psi}\\
&=-\scphit{\A x}{\psi}
+\scphioaomo{f}{\As\psi}
=-\scphitatmo{y_{x}}{\psi}
+\scphioaomo{f}{\As\psi}.
\end{align*}
Hence $y_{x}=y$ and $\A^{**}=\A$ completes the proof.
\end{proof}

\begin{rem}
\label{rem:isometrygen}
We know $\normdaidaoat{x}\leq\normhioaomo{f}$ and $\normdasaomoatmo{y}\leq\normhioaomo{f}$.
It is indeed notable that 
$$\tnorm{(x,y)}=\normhioaomo{f}$$ 
holds, which follows immediately by $y=\alphat\A x$ and
\begin{align*}
\normhioaomo{f}^2
=\normhioaomo{\As\alphat\A x+\alphao x}^2
&=\normhioaomo{\As y}^2
+\normhioaomo{\alphao x}^2
+2\Re\ubr{\scphioaomo{\As\alphat\A x}{\alphao x}}_{\ds=\scphio{\As\alphat\A x}{x}}\\
&=\normhioaomo{\As y}^2
+\normhioao{x}^2
+2\ubr{\Re\scphit{\alphat\A x}{\A x}}_{\ds=\normhitat{\A x}^2}
=\tnorm{(x,y)}^2.
\end{align*}
Thus the solution operator
$$L:\hio\to D(\A)\times D(\As);f\mapsto(x,y)$$
(equipped with the proper weighed norms)
has norm $\norm{L}=1$, i.e., $L$ is an isometry.
\end{rem}

By the latter remark the combined norm on $D(\A)\times D(\As)$ yields an isometry.
This motivates the usage of the combined norm also for error estimates.
As it turns out, we even obtain error equalities.
First we show that an error equality follows directly 
from the isometry property Remark \ref{rem:isometrygen} 
if the approximation of the primal variable $x$ is regular enough.

\begin{theo}
\label{thm:Gmain_reg}
Let $(x,y) \in D(\A)\times D(\As)$ be the exact solution of \eqref{mixedformulation}. 
Let $\xt \in D(\A)$ be arbitrary and $\yt=\alphat\A\xt \in D(\As)$. 
Then, for the mixed approximation $(\xt,\yt)$ we have
\begin{align}
\label{eq:Gmain_reg1}
\tnorm{(x,y)-(\xt,\yt)}^2=\R(\xt,\yt)
\end{align}
and the normalized counterpart
\begin{align} 
\label{eq:Gmain_reg2}
\frac{\tnorm{(x,y)-(\xt,\yt)}^2}{\tnorm{(x,y)}^2}
=\frac{\R(\xt,\yt)}{\normhioaomo{f}^2}
\end{align}
hold, where
$$\R(\xt,\yt):=\normhioaomo{f-\alphao\xt-\As\yt}^2.$$
\end{theo}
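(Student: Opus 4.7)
The plan is to exploit the isometry property of the solution operator $L$ established in Remark~\ref{rem:isometrygen}. The hypothesis $\yt=\alphat\A\xt\in D(\As)$ is precisely tailored so that the mixed pair $(\xt,\yt)$ is itself an exact solution of \eqref{mixedformulation} for a suitable auxiliary right-hand side.

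More precisely, I would set $\ft:=\alphao\xt+\As\yt\in\hio$. Since $\xt\in D(\A)$, $\yt\in D(\As)$, and $\alphat\A\xt=\yt$ by assumption, the pair $(\xt,\yt)$ satisfies the mixed formulation $\As\yt+\alphao\xt=\ft$, $\alphat\A\xt=\yt$. The uniqueness assertion in Lemma~\ref{laxmilgramA} (together with Lemma~\ref{laxmilgramAs}) therefore gives $(\xt,\yt)=L(\ft)$, where $L\colon\hio\to D(\A)\times D(\As)$ is the solution operator of Remark~\ref{rem:isometrygen}.

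The next step is to use linearity of $L$ to write $(x-\xt,\,y-\yt)=L(f-\ft)$, and then to apply the isometry identity $\tnorm{L(g)}^2=\normhioaomo{g}^2$ from Remark~\ref{rem:isometrygen} with $g:=f-\ft$. This yields
$$\tnorm{(x,y)-(\xt,\yt)}^2=\normhioaomo{f-\ft}^2=\normhioaomo{f-\alphao\xt-\As\yt}^2=\R(\xt,\yt),$$
which is \eqref{eq:Gmain_reg1}. The normalized version \eqref{eq:Gmain_reg2} is then obtained by dividing by $\tnorm{(x,y)}^2=\normhioaomo{f}^2$, again taken from Remark~\ref{rem:isometrygen}.

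I do not foresee any real obstacle: the regularity requirement $\yt=\alphat\A\xt\in D(\As)$ has been engineered precisely so that the residual $f-\alphao\xt-\As\yt$ is an admissible right-hand side driving the error pair, reducing the claim to a one-line application of the isometry. The only minor points to check are that $\ft\in\hio$ (immediate since $\alphao\xt,\As\yt\in\hio$) and that $L$ is linear, which is built into Lemma~\ref{laxmilgramA}.
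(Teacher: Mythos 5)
Your proposal is correct and follows exactly the same route as the paper's own proof: identify $(\xt,\yt)=L(\ft)$ with $\ft:=\alphao\xt+\As\yt$, then apply linearity and the isometry $\tnorm{L(g)}=\normhioaomo{g}$ from Remark~\ref{rem:isometrygen} to $g=f-\ft$, and divide by $\tnorm{(x,y)}^2=\normhioaomo{f}^2$ for the normalized version. No gaps.
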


\begin{proof}
Since $\xt$ is very regular, especially $\yt = \alphat\A\xt \in D(\As)$, 
the pair $(\xt,\yt)$ is the exact solution of the problem
$$\As\yt+\alphao\xt=:\ft,\quad\alphat\A\xt=\yt ,$$
i.e., we have $L(\ft) = (\xt,\yt)$. 
Then \eqref{eq:Gmain_reg1} is given directly by Remark \ref{rem:isometrygen}:
$$\tnorm{(x,y)-(\xt,\yt)}^2 = \tnorm{L(f-\ft)}^2
= \normhioaomo{f-\ft}^2 ,$$
since $L$ is linear. The estimate \eqref{eq:Gmain_reg2} 
follows by Remark \ref{rem:solutionoperatornormgen} as well.
\end{proof}

Satisfying the high regularity property required in Theorem \ref{thm:Gmain_reg} 
may not be convenient for practical calculations. 
The next result, the first main result of the paper, 
holds for less regular approximations.

\begin{theo}
\label{thm:Gmain}
Let $(x,y),(\xt,\yt)\in D(\A)\times D(\As)$ be the exact solution of \eqref{mixedformulation}
and any conforming approximation, respectively. Then
\begin{align} 
\label{eq:Gmain1}
\tnorm{(x,y)-(\xt,\yt)}^2=\M(\xt,\yt)
\end{align}
and the normalized counterpart
\begin{align} 
\label{eq:Gmain2}
\frac{\tnorm{(x,y)-(\xt,\yt)}^2}{\tnorm{(x,y)}^2}
=\frac{\M(\xt,\yt)}{\normhioaomo{f}^2} 
\end{align}
hold, where
\begin{align} 
\label{eq:Gmain}
\M(\xt,\yt):=\normhioaomo{f-\alphao\xt-\As\yt}^2
+\normhitatmo{\yt-\alphat\A\xt}^2.
\end{align}
\end{theo}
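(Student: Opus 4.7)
The plan is to reduce the equality to an elementary Hilbert-space identity by substituting the exact equations into the residual, expanding the weighted norms, and showing that the cross terms cancel.

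First I would introduce the errors $e_x:=x-\xt\in D(\A)$ and $e_y:=y-\yt\in D(\As)$ and rewrite the two residual quantities appearing in $\M(\xt,\yt)$ purely in terms of $e_x$ and $e_y$. Using the mixed system \eqref{mixedformulation}, $f=\As y+\alphao x$ and $y=\alphat\A x$, a direct substitution gives
\begin{align*}
f-\alphao\xt-\As\yt &= \alphao e_x+\As e_y,\\
\yt-\alphat\A\xt &= -e_y+\alphat\A e_x.
\end{align*}
At this point the conformity $(\xt,\yt)\in D(\A)\times D(\As)$ is crucial, since it ensures $\A e_x$ and $\As e_y$ are well defined.

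Next I would expand the two weighted squared norms that define $\M(\xt,\yt)$. For the first,
\begin{align*}
\normhioaomo{\alphao e_x+\As e_y}^2
&= \normhioaomo{\alphao e_x}^2+\normhioaomo{\As e_y}^2+2\Re\scphioaomo{\alphao e_x}{\As e_y}.
\end{align*}
Using that $\alphao$ is self adjoint and positive, $\normhioaomo{\alphao e_x}^2=\normhioao{e_x}^2$, and using the definition of the weighted scalar product together with the adjoint identity \eqref{partint} (valid because $e_x\in D(\A)$ and $e_y\in D(\As)$), the cross term equals $2\Re\scphit{\A e_x}{e_y}$. For the second residual, the analogous computation yields
\[
\normhitatmo{-e_y+\alphat\A e_x}^2
= \normhitatmo{e_y}^2+\normhitat{\A e_x}^2-2\Re\scphit{\A e_x}{e_y},
\]
where again I use that $\alphat$ is a positive self adjoint isomorphism.

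Finally, adding the two expansions, the cross terms $\pm 2\Re\scphit{\A e_x}{e_y}$ cancel exactly, and what remains is precisely
\[
\normhioao{e_x}^2+\normhitat{\A e_x}^2+\normhitatmo{e_y}^2+\normhioaomo{\As e_y}^2
= \tnorm{(e_x,e_y)}^2 = \tnorm{(x,y)-(\xt,\yt)}^2.
\]
This proves \eqref{eq:Gmain1}, and the normalized version \eqref{eq:Gmain2} then follows immediately from the isometry identity $\tnorm{(x,y)}^2=\normhioaomo{f}^2$ established in Remark \ref{rem:isometrygen}. The main obstacle is essentially bookkeeping: one must be careful with the definitions of the weighted inner products (since $\alphao^{-1}$ appears on one side and $\alphao$ on the other) in order to see that the two cross terms indeed produce the same real scalar with opposite sign; everything else is a direct expansion.
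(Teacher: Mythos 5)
Your proposal is correct and follows essentially the same route as the paper's own proof: substitute the exact equations \eqref{eq:Gstrong} and $y=\alphat\A x$ into the two residuals, expand the weighted norms, cancel the cross terms via \eqref{partint}, and invoke Remark \ref{rem:isometrygen} for the normalized version. No gaps.
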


\begin{proof}
By using \eqref{eq:Gstrong} and inserting $0=\alphat\A x-y$ we get by \eqref{partint}
\begin{align*}
\M(\xt,\yt)
&=\normhioaomo{\alphao x-\alphao\xt+\As y-\As\yt}^2
+\normhitatmo{\yt-y+\alphat\A x-\alphat\A\xt}^2\\
&=\normhioao{x-\xt}^2
+\normhioaomo{\As(y-\yt)}^2
+2\Re\scphioaomo{\alphao(x-\xt)}{\As(y-\yt)}\\
&\qquad+\normhitatmo{\yt-y}^2
+\normhitat{\A(x-\xt)}^2
+2\Re\scphitatmo{\yt-y}{\alphat\A(x-\xt)}\\
&=\normdaidaoat{x-\xt}^2+\normdasaomoatmo{y-\yt}^2\\
&\qquad+2\Re\scphio{x-\xt}{\As(y-\yt)}
-2\Re\scphit{\A(x-\xt)}{y-\yt}\\
&=\tnorm{(x,y)-(\xt,\yt)}^2.
\end{align*}
\eqref{eq:Gmain2} follows by the isometry property in Remark \ref{rem:isometrygen}, completing the proof.
\end{proof}

We note that the isometry property, i.e., $\tnorm{(x,y)}=\normhioaomo{f}$,
can be seen by inserting $(\xt,\yt)=(0,0)$ into \eqref{eq:Gmain1} as well.
The result of Theorem \ref{thm:Gmain_reg} can also be seen from Theorem \ref{thm:Gmain}.

\begin{rem}
In the purely real case, where the Hilbert spaces are over $\reals$
and all objects are real valued, Theorem \ref{thm:Gmain} 
can also be deduced as a special case of \cite[(7.2.14)]{NeittaanmakiRepin2004} 
in the book of Neittaam\"aki and Repin. 
The equality for the purely real reaction-diffusion equation ($\A = \nabla, \As = -\div$), 
was found also by Cai and Zhang in \cite[Remark 6.12]{caiestimators}.
\end{rem}

\begin{cor}
\label{cor:maintheocor}
Theorem \ref{thm:Gmain} provides the well known a posteriori error estimates 
for the primal and dual problems. 
\begin{itemize}
\item[\bf(i)]
For any $\xt\in D(\A)$ it holds
$\ds\normdaidaoat{x-\xt}^2
=\min_{\psi\in D(\As)}\M(\xt,\psi)
=\M(\xt,y)$.
\item[\bf(ii)]
For any $\yt\in D(\As)$ it holds
$\ds\normdasaomoatmo{y-\yt}^2
=\min_{\varphi\in D(\A)}\M(\varphi,\yt)
=\M(x,\yt)$.
\end{itemize}
\end{cor}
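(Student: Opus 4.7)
The plan is to read off both statements as immediate consequences of Theorem \ref{thm:Gmain} by exploiting the fact that the identity \eqref{eq:Gmain1} splits the majorant $\M(\tilde x,\tilde y)$ into two non-negative contributions, one depending only on the first slot, the other only on the second slot.

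More precisely, for (i) I would fix $\tilde x\in D(\A)$ and apply Theorem \ref{thm:Gmain} to the pair $(\tilde x,\psi)$ for an arbitrary $\psi\in D(\As)$. This yields
$$\M(\tilde x,\psi)=\tnorm{(x,y)-(\tilde x,\psi)}^2=\normdaidaoat{x-\tilde x}^2+\normdasaomoatmo{y-\psi}^2.$$
The first summand is independent of $\psi$, while the second is non-negative and vanishes precisely for $\psi=y$; since $y\in D(\As)$ by Lemma \ref{laxmilgramAs}, this choice is admissible and realizes the infimum. Consequently
$$\min_{\psi\in D(\As)}\M(\tilde x,\psi)=\M(\tilde x,y)=\normdaidaoat{x-\tilde x}^2,$$
proving (i). Part (ii) is completely symmetric: for fixed $\tilde y\in D(\As)$, Theorem \ref{thm:Gmain} applied to $(\varphi,\tilde y)$ gives the splitting
$$\M(\varphi,\tilde y)=\normdaidaoat{x-\varphi}^2+\normdasaomoatmo{y-\tilde y}^2,$$
and the minimum over $\varphi\in D(\A)$ is realized at $\varphi=x\in D(\A)$ (admissible by Lemma \ref{laxmilgramA}), yielding $\min_{\varphi\in D(\A)}\M(\varphi,\tilde y)=\M(x,\tilde y)=\normdasaomoatmo{y-\tilde y}^2$.

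There is essentially no serious obstacle here; the corollary is a formal rewriting of the equality \eqref{eq:Gmain1}. The only point deserving explicit mention is that the exact solution components $x$ and $y$ are legitimate candidates in the respective minimization problems, which is exactly the content of Lemmas \ref{laxmilgramA} and \ref{laxmilgramAs}. In particular, the infima are attained (i.e., they are genuine minima), and the minimizers are uniquely determined by the strict positivity of the missing norm contribution whenever the minimizing argument deviates from $x$ respectively $y$.
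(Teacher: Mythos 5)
Your proposal is correct and follows essentially the same route as the paper: both exploit the splitting $\tnorm{(x,y)-(\xt,\psi)}^2=\normdaidaoat{x-\xt}^2+\normdasaomoatmo{y-\psi}^2$ from Theorem \ref{thm:Gmain}, observe that the second term is nonnegative and independent of the first slot, and verify that the choice $\psi=y$ (resp.\ $\varphi=x$) annihilates it, so the infimum is attained. The only cosmetic difference is that the paper phrases this as an inequality followed by a verification of equality at the exact solution, whereas you state the additive identity directly; your added remark on uniqueness of the minimizer is a correct bonus not present in the paper.
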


\begin{proof}
We just have to estimate
$$\normdaidaoat{x-\xt}^2
\leq\tnorm{(x,y)-(\xt,\yt)}^2
=\M(\xt,\yt)$$
and note that the left hand side does not depend on $\yt\in D(\As)$.
Setting $\psi:=\yt\in D(\As)$ we get
$$\normdaidaoat{x-\xt}^2
\leq\inf_{\psi\in D(\As)}\M(\xt,\psi).$$
But for $\psi=y\in D(\As)$ we see $\M(\xt,y)=\normdaidaoat{x-\xt}^2$,
which proves (i). Analogously, we estimate
$$\normdasaomoatmo{y-\yt}^2
\leq\tnorm{(x,y)-(\xt,\yt)}^2
=\M(\xt,\yt)$$
and note that the left hand side does not depend on $\xt\in D(\A)$.
Setting $\varphi:=\xt\in D(\A)$ we get
$$\normdasaomoatmo{y-\yt}^2
\leq\inf_{\varphi\in D(\A)}\M(\varphi,\yt).$$
But for $\varphi=x\in D(\A)$ we see $\M(x,\yt)=\normdasaomoatmo{y-\yt}^2$,
which shows (ii).
\end{proof}

\begin{rem}
\label{laxmilgramAsrem}
\mbox{}
\begin{itemize}
\item[\bf(i)]
Since $y\,\bot_{\alphat^{-1}}\,N(\As)$ by \eqref{varAs} we get immediately that $y\in\alphat\ol{R(\A)}$
by the Helmholtz decomposition $\hit=N(\As)\oplus_{\alphat^{-1}}\alphat\ol{R(\A)}$.
\item[\bf(ii)] 
If $\alphao^{-1}f\in D(\A)$ we have $z:=\alphao^{-1}\As y\in D(\A)$ 
and the strong and mixed formulations of \eqref{varAs} read
\begin{align*}
\A\alphao^{-1}\As y+\alphat^{-1}y&=\A\alphao^{-1}f,\\
\A z+\alphat^{-1}y&=\A\alphao^{-1}f,\quad \alphao^{-1}\As y=z.
\end{align*}
Then for all $\varphi\in D(\A)$ we have
\begin{align*}
\scphitat{\A z}{\A\varphi}+\scphioao{z}{\varphi}
&=-\scphit{y}{\A\varphi}+\scphioao{z}{\varphi}+\scphitat{\A\alphao^{-1}f}{\A\varphi}\\
&=\scphitat{\A\alphao^{-1}f}{\A\varphi}
\end{align*}
and hence $z\in\big(D(\A)\cap\alphao^{-1} R(\As)\big)\subset D(\A)$ 
is the unique solution of this variational problem.
Furthermore, $\alphat(\A z-\A\alphao^{-1}f)\in D(\As)$ 
and $\As\alphat(\A z-\A\alphao^{-1}f)=-\alphao z$.
If $\alphat\A\alphao^{-1}f$ belongs to $D(\As)$ this yields 
$\alphat\A z\in D(\As)$ and the strong equation
$$\As\alphat\A z+\alphao z=\As\alphat\A\alphao^{-1}f.$$
\end{itemize}
\end{rem}


\subsection{Case II: Two-Sided Error Estimate for Coefficients $i\omega\alphao$ and $\alphat$}
\label{subsec:Gcase2}

In the following $\omega \in \realsnotz$. 
Using the sub-index notation we define for $\varphi\in D(\A)$ 
and $\psi\in D(\As)$ new weighted norms
on $D(\A)$, $D(\As)$ as well as on the product space $D(\A)\times D(\As)$ by
\begin{align*}
\normdaidomaoat{\varphi}^2
&=\normhioomao{\varphi}^2
+\normhitat{\A\varphi}^2,\\
\normdasomaomoatmo{\psi}^2
&=\normhitatmo{\psi}^2
+\normhioomaomo{\As\psi}^2,\\
\ttnorm{(\varphi,\psi)}^2
&:=\normdaidomaoat{\varphi}^2
+\normdasomaomoatmo{\psi}^2.
\end{align*}

By the Lax-Milgram's lemma we get immediately:

\begin{lem}
\label{laxmilgramA2}
The (primal) variational problem
\begin{align}
\label{varA2}
\forall\,\varphi\in D(\A)\qquad
\scphitat{\A x}{\A\varphi}+i\scphioomao{x}{\varphi}=\scphio{f}{\varphi}
\end{align}
admits a unique solution $x\in D(\A)$ satisfying $\normdaidomaoat{x}\leq\sqrt{2}\normhioomaomo{f}$. 
Moreover, $y_{x}:=\alphat\A x$ belongs to $D(\As)$ and $\As y_{x}=f-i\omega\alphao x$.
Hence, the strong and mixed formulations
\begin{align} 
\label{eq:Gstrong2}
\As\alphat\A x+i\omega\alphao x&=f,\\
\label{mixedformulation2}
\As y_{x}+i\omega\alphao x&=f,\quad\alphat\A x=y_{x}
\end{align}
hold with $(x,y_{x})\in D(\A)\times \big(D(\As)\cap\alphat R(\A)\big)$.
\end{lem}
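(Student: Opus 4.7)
The plan is to follow the same blueprint as Lemma \ref{laxmilgramA}, i.e.\ apply the complex Lax--Milgram lemma to the sesquilinear form
$$B(x,\varphi):=\scphitat{\A x}{\A\varphi}+i\scphioomao{x}{\varphi}$$
on the Hilbert space $D(\A)$ equipped with the weighted graph inner product inducing $\normdaidomaoat{\,\cdot\,}$, and with right hand side $\ell(\varphi):=\scphio{f}{\varphi}$. First I would verify continuity of $B$ by a direct Cauchy--Schwarz estimate, which yields $|B(x,\varphi)|\leq\normdaidomaoat{x}\normdaidomaoat{\varphi}$. Then I would bound $\ell$: writing the pairing as $\scphio{f}{\varphi}=\scphio{(|\omega|\alphao)^{-1}f}{(|\omega|\alphao)\varphi}$ and applying weighted Cauchy--Schwarz gives $|\ell(\varphi)|\leq\normhioomaomo{f}\normhioomao{\varphi}\leq\normhioomaomo{f}\normdaidomaoat{\varphi}$.

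The key technical point is the coercivity of $B$, since $i\omega$ is purely imaginary and $\omega$ may be negative, so $B$ is not coercive in the usual sense, only \emph{sesquilinear-coercive} via $|B(\varphi,\varphi)|$. Using self-adjointness and positivity of $\alphao$ I would compute
$$B(\varphi,\varphi)=\normhitat{\A\varphi}^2+i\omega\normhioao{\varphi}^2,$$
whose real and imaginary parts are both real. Hence
$$|B(\varphi,\varphi)|=\bigl(\normhitat{\A\varphi}^4+\omega^2\normhioao{\varphi}^4\bigr)^{\!\nicefrac{1}{2}}\geq\tfrac{1}{\sqrt{2}}\bigl(\normhitat{\A\varphi}^2+|\omega|\normhioao{\varphi}^2\bigr)=\tfrac{1}{\sqrt{2}}\normdaidomaoat{\varphi}^2,$$
using the elementary inequality $\sqrt{a^2+b^2}\geq(a+b)/\sqrt{2}$ for $a,b\geq0$. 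This is the main obstacle and explains the factor $\sqrt{2}$ appearing in the claimed bound: a complex-valued Lax--Milgram argument (e.g.\ using the Riesz representation for the modified form $B(\varphi,\varphi)/|B(\varphi,\varphi)|\cdot B(\cdot,\cdot)$, or directly a Babu\v{s}ka--Ne\v{c}as inf-sup argument) then yields a unique $x\in D(\A)$ solving \eqref{varA2} with $\normdaidomaoat{x}\leq\sqrt{2}\,\normhioomaomo{f}$.

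Finally, to obtain regularity of the dual variable, I would rewrite the variational identity \eqref{varA2} as
$$\forall\,\varphi\in D(\A)\qquad\scphit{\alphat\A x}{\A\varphi}=\scphio{f-i\omega\alphao x}{\varphi},$$
where the right hand side is a bounded antilinear functional of $\varphi\in\hio$. By definition of the adjoint this is precisely the statement $\alphat\A x\in D(\As)$ together with $\As(\alphat\A x)=f-i\omega\alphao x$. Setting $y_x:=\alphat\A x$ immediately gives the mixed formulation \eqref{mixedformulation2} and, using $\A^{**}=\A$, the strong formulation \eqref{eq:Gstrong2}. The inclusion $y_x\in\alphat R(\A)$ is then built into the construction.
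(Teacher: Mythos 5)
Your proposal is correct and follows the same route as the paper, which simply invokes the (complex) Lax--Milgram lemma without writing out the details; you supply exactly the missing ingredients, in particular the modulus-coercivity $|B(\varphi,\varphi)|\geq\tfrac{1}{\sqrt{2}}\normdaidomaoat{\varphi}^2$ that produces the stability constant $\sqrt{2}$, and the adjoint-definition argument giving $\alphat\A x\in D(\As)$ with $\As(\alphat\A x)=f-i\omega\alphao x$. This is consistent with how the paper later recovers the same bound in Remark \ref{rem:solutionoperatornormgen}.
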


To get the dual problem, we multiply the first equation of \eqref{mixedformulation2}
by $\As\psi$ with $\psi\in D(\As)$ 
taking the right weighted scalar product and use $y_{x}=\alphat\A x\in D(\As)$.
We obtain
$$\scphioomaomo{\As y_{x}}{\As\psi}
+\scphioomaomo{i\omega\alphao x}{\As\psi}
=\scphioomaomo{f}{\As\psi}.$$
Since $x\in D(\A)$
$$\scphioomaomo{i\omega\alphao x}{\As\psi}
=i\scphio{x}{\As\psi}
=i\scphit{\A x}{\psi}
=i\scphitatmo{y_{x}}{\psi}$$
holds, and we get again by the Lax-Milgram's lemma (see Lemma \ref{laxmilgramAs}):

\begin{lem}
\label{laxmilgramAs2}
The (dual) variational problem
\begin{align}
\label{varAs2}
\forall\,\psi\in D(\As)\qquad
\scphioomaomo{\As y}{\As\psi}
+i\scphitatmo{y}{\psi}
=\scphioomaomo{f}{\As\psi}
\end{align}
admits a unique solution $y\in D(\As)$ satisfying $\normdasomaomoatmo{y}\leq\sqrt{2}\normhioomaomo{f}$.
Moreover, $y=y_{x}$ holds and thus $y$ belongs to 
$D(\As)\cap\alphat R(\A)$ with $x$ and $y_{x}$ from Lemma \ref{laxmilgramA2}.
Furthermore, $(\omega\alphao)^{-1}(\As y-f)\in D(\A)$ with
$A(\omega\alphao)^{-1}(\As y-f)=-i\alphat^{-1}y$.
\end{lem}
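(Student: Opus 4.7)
The plan is to mirror the proof of Lemma \ref{laxmilgramAs} from Case I with the necessary modifications to handle the imaginary lower-order coefficient. The proof splits naturally into three steps which I carry out in order: (i) existence, uniqueness, and the a priori bound via Lax-Milgram applied to the dual sesquilinear form; (ii) the identification $y = y_x$ by direct substitution of $y_x = \alphat\A x$; and (iii) reading off the final regularity statement for $(\omega\alphao)^{-1}(\As y - f)$ from the mixed formulation \eqref{mixedformulation2}.

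For step (i), I apply Lax-Milgram to the sesquilinear form
$$b(y,\psi) := \scphioomaomo{\As y}{\As\psi} + i\,\scphitatmo{y}{\psi}$$
on $D(\As)$ equipped with the weighted graph norm $\normdasomaomoatmo{\cdot}$. Continuity is a routine Cauchy-Schwarz estimate. Coercivity needs more care because of the complex coefficient: setting $\psi = y$ yields $b(y,y) = \normhioomaomo{\As y}^2 + i\,\normhitatmo{y}^2$, whose real and imaginary parts are precisely the two nonnegative contributions to $\normdasomaomoatmo{y}^2$. The elementary inequality $\sqrt{a^2+b^2} \geq (a+b)/\sqrt{2}$ for $a,b \geq 0$ then gives $|b(y,y)| \geq \frac{1}{\sqrt{2}}\normdasomaomoatmo{y}^2$. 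Combined with $|b(y,y)| \leq \normhioomaomo{f}\,\normhioomaomo{\As y} \leq \normhioomaomo{f}\,\normdasomaomoatmo{y}$, this simultaneously yields the asserted bound $\normdasomaomoatmo{y} \leq \sqrt{2}\,\normhioomaomo{f}$ and the coercivity needed for Lax-Milgram to supply existence and uniqueness.

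For step (ii), I reproduce the short calculation from the proof of Lemma \ref{laxmilgramAs}, now carrying an extra factor $i$ throughout. Starting from $\As y_x = f - i\omega\alphao x$ (the first equation of \eqref{mixedformulation2}), pairing with $\As\psi$ in $\scphioomaomo{\cdot}{\cdot}$, using $x\in D(\A)$ and $\psi\in D(\As)$ together with \eqref{partint}, and substituting $y_x = \alphat\A x$, I arrive at \eqref{varAs2} with $y$ replaced by $y_x$. Uniqueness from step (i) then forces $y = y_x$, so in particular $y \in D(\As)\cap\alphat R(\A)$. Step (iii) is now immediate: rearranging the mixed formulation gives $(\omega\alphao)^{-1}(\As y - f) = -ix$, which lies in $D(\A)$ since $x\in D(\A)$, and applying $\A$ together with $y = \alphat\A x$ yields $\A(\omega\alphao)^{-1}(\As y - f) = -i\alphat^{-1}y$.

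The main obstacle is step (i), specifically tracking the constant $\sqrt{2}$: unlike in Case I, where the analogous form is positive Hermitian and Remark \ref{rem:isometrygen} even yields an exact isometry, here coercivity must pass through the modulus of a complex number with two positive components, and the factor $1/\sqrt{2}$ is unavoidable. This is ultimately what propagates to the constants $\sqrt{2}/(\sqrt{2}\pm 1)$ in the two-sided Theorem \ref{thm:Gmain2}.
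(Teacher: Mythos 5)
Your proposal is correct and follows essentially the same route as the paper: Lax--Milgram for the complex sesquilinear form (with the coercivity/bound computation via the modulus of $b(y,y)$, which is exactly the calculation the paper records in Remark \ref{rem:solutionoperatornormgen}), identification $y=y_x$ by substituting $y_x=\alphat\A x$ into \eqref{varAs2} as in the proof of Lemma \ref{laxmilgramAs}, and the final regularity read off from \eqref{mixedformulation2}. The only nitpick is that for $\omega<0$ the real part of $b(y,y)$ is $-\normhioomaomo{\As y}^2$ rather than $+\normhioomaomo{\As y}^2$ (since $\scphioomaomo{\cdot}{\cdot}$ carries $\omega$, not $\norm{\omega}$), but this does not affect the modulus and hence not the argument.
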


\begin{rem}
\label{rem:solutionoperatornormgen}
We know
\begin{equation} \label{eq:solutionoperatornormgen_known}
\normdaidomaoat{x}\leq\sqrt{2}\normhioomaomo{f}
\qquad \textrm{and} \qquad
\normdasomaomoatmo{y}\leq\sqrt{2}\normhioomaomo{f}.
\end{equation}
It is indeed notable that
\begin{equation} \label{eq:solutionoperatornormgen1}
\normhioomaomo{f}^2 = \normhioomaomo{\As y}^2 + \normhioomao{x}^2
\end{equation}
and
\begin{equation} \label{eq:solutionoperatornormgen2}
\normhioomaomo{f}\leq\ttnorm{(x,y)}\leq\sqrt{2}\normhioomaomo{f}
\end{equation}
hold\footnote{The following simple example shows that the upper bound in \eqref{eq:solutionoperatornormgen2}
is sharp: Let $\hio=\hit$, $\A:=\As:=\id$, $\omega:=1$ and $\alphao:=\alphat:=1$.
Then $x=y$, $(1+i)x=f$ and $\ttnorm{(x,y)}^2=4\normhio{x}^2=2\normhio{f}^2$.}. 
The identity \eqref{eq:solutionoperatornormgen1} follows immediately by $y=\alphat\A x$ and
\begin{align*}
\normhioomaomo{f}^2
&=\normhioomaomo{\As\alphat\A x+i\omega\alphao x}^2 \\
&=\normhioomaomo{\As y}^2
+\normhioomaomo{i\omega\alphao x}^2
+2\Re\ubr{\scp{\As\alphat\A x}{i\omega\alphao x}_{\hio,(\norm{\omega}\alphao)^{-1}}}_{\ds=-i\sign{\omega}\scphio{\As\alphat\A x}{x}}\\
&=\normhioomaomo{\As y}^2
+\normhioomao{x}^2
- 2\ubr{\Re\left(i\sign{\omega}\scphit{\alphat\A x}{\A x}\right)}_{\ds=0} .
\end{align*}
The lower bound in \eqref{eq:solutionoperatornormgen2} 
follows from \eqref{eq:solutionoperatornormgen1}. 
The upper bound in \eqref{eq:solutionoperatornormgen2} is seen as follows: 
First we take \eqref{varA2} with $\varphi=x$ and \eqref{varAs2} with $\psi=y$, and obtain
\begin{align*}
\normhitat{\A x}^2 + i\omega\normhioao{x}^2 & = \scphio{f}{x} , \\
\omega^{-1}\normhioaomo{\As y}^2 + i\normhitatmo{y}^2 & = \scphioomaomo{f}{\As y} .
\end{align*}
By taking the norm of both sides we obtain
\begin{align*}
\normhitat{\A x}^4 + \norm{\omega}^2\normhioao{x}^4 & = \norm{\scphio{f}{x}}^2 , \\
\norm{\omega}^{-2}\normhioaomo{\As y}^4 + \normhitatmo{y}^4 & = \norm{\scphioomaomo{f}{\As y}}^2 ,
\end{align*}
showing
\begin{align*}
\frac{1}{\sqrt{2}}\normdaidomaoat{x}^2 & \leq \normhioomaomo{f} \normhioomao{x} , \\
\frac{1}{\sqrt{2}}\normdasomaomoatmo{y}^2 & \leq \normhioomaomo{f} \normhioomaomo{\As y} .
\end{align*}
From these we could derive the estimates \eqref{eq:solutionoperatornormgen_known} 
for $x$ and $y$ separately. Moreover, by summing up and \eqref{eq:solutionoperatornormgen1} we get
\begin{align*}
\frac{1}{\sqrt{2}} \ttnorm{(x,y)}^2
& \leq \normhioomaomo{f} \left( \normhioomao{x}+\normhioomaomo{\As y} \right) \\
& \leq \normhioomaomo{f} \sqrt{2} \sqrt{ \normhioomao{x}^2+\normhioomaomo{\As y}^2 }
= \sqrt{2} \normhioomaomo{f}^2 
\end{align*}
and we have the upper bound in \eqref{eq:solutionoperatornormgen2}. 
Thus the norm of the solution operator
$$L_i:\hio\to D(\A)\times D(\As);f\mapsto(x,y)$$
(equipped with the proper weighted norms)
satisfies $1\leq\norm{L_i}\leq\sqrt{2}$.
Hence $L_{i}$ is `almost' an isometry.
\end{rem}

The latter remark motivates the usage of the combined norm also for error estimates.
First we show that a two-sided error estimate follows directly 
from Remark \ref{rem:solutionoperatornormgen}, if the approximation 
of the primal variable $x$ is regular enough.

\begin{theo}
\label{thm:Gmain2_reg}
Let $(x,y) \in D(\A)\times D(\As)$  be the exact solution of \eqref{mixedformulation2}. 
Let $\xt \in D(\A)$ be arbitrary and $\yt=\alphat\A\xt \in D(\As)$. 
Then, for the mixed approximation $(\xt,\yt)$ we have
\begin{align}
\label{eq:Gmain2_reg1}
\Ri(\xt,\yt)
\leq \ttnorm{(x,y)-(\xt,\yt)}^2
\leq 2 \Ri(\xt,\yt)
\end{align}
and the normalized counterpart
\begin{align} 
\label{eq:Gmain2_reg2}
\frac{1}{2} \cdot \frac{\Ri(\xt,\yt)}{\normhioomaomo{f}^2}
\leq \frac{\ttnorm{(x,y)-(\xt,\yt)}^2}{\ttnorm{(x,y)}^2}
\leq 2 \frac{\Ri(\xt,\yt)}{\normhioomaomo{f}^2}
\end{align}
hold, where
$$\Ri(\xt,\yt) := \normhioomaomo{f - i\omega\alphao\xt - \As\yt}^2 .$$
\end{theo}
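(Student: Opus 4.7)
The plan is to exploit the linearity of the solution operator $L_i$ from Remark \ref{rem:solutionoperatornormgen}, exactly as in the proof of Theorem \ref{thm:Gmain_reg}, combined with the two-sided bound $\normhioomaomo{f}\leq\ttnorm{(x,y)}\leq\sqrt{2}\normhioomaomo{f}$ established there.

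First I would observe that since $\xt\in D(\A)$ and $\yt=\alphat\A\xt\in D(\As)$, the pair $(\xt,\yt)$ is itself the exact solution of the mixed problem
$$\As\yt+i\omega\alphao\xt=\ft,\quad\alphat\A\xt=\yt,$$
with the right hand side $\ft:=\As\yt+i\omega\alphao\xt$. In other words $L_i(\ft)=(\xt,\yt)$. By linearity of $L_i$ the error pair $(x-\xt,y-\yt)=L_i(f-\ft)$ is therefore the exact solution corresponding to the right hand side $f-\ft=f-i\omega\alphao\xt-\As\yt$, whose weighted norm is exactly $\sqrt{\Ri(\xt,\yt)}$.

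Second, I would apply the two-sided bound \eqref{eq:solutionoperatornormgen2} from Remark \ref{rem:solutionoperatornormgen} with datum $f-\ft$ in place of $f$. This gives
$$\normhioomaomo{f-\ft}^2\leq\ttnorm{(x-\xt,y-\yt)}^2\leq 2\normhioomaomo{f-\ft}^2,$$
which is precisely \eqref{eq:Gmain2_reg1}. For the normalized estimate \eqref{eq:Gmain2_reg2} I would combine \eqref{eq:Gmain2_reg1} with the same bound \eqref{eq:solutionoperatornormgen2} applied to $(x,y)$, i.e., $\normhioomaomo{f}^2\leq\ttnorm{(x,y)}^2\leq 2\normhioomaomo{f}^2$. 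Dividing, the lower bound $\ttnorm{(x,y)-(\xt,\yt)}^2\geq\Ri(\xt,\yt)$ combined with $\ttnorm{(x,y)}^2\leq 2\normhioomaomo{f}^2$ yields the left inequality with factor $\tfrac12$, while the upper bound $\ttnorm{(x,y)-(\xt,\yt)}^2\leq 2\Ri(\xt,\yt)$ combined with $\ttnorm{(x,y)}^2\geq\normhioomaomo{f}^2$ yields the right inequality with factor $2$.

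There is essentially no obstacle here: the substantive analytical content (the non-isometry bound $1\leq\norm{L_i}\leq\sqrt{2}$) has already been established in Remark \ref{rem:solutionoperatornormgen}. The only thing that needs care is the bookkeeping of the two factors of $\sqrt{2}$ when dividing the error norm by the solution norm, and verifying that the resulting constants match those stated in \eqref{eq:Gmain2_reg2}; both come out as $\tfrac12$ and $2$ as required. The proof is thus a direct linearity-plus-two-sided-bound argument, paralleling the structure of Theorem \ref{thm:Gmain_reg} but yielding a two-sided estimate rather than an equality because $L_i$ is only an `almost' isometry.
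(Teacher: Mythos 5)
Your proposal is correct and follows essentially the same route as the paper: identify $(\xt,\yt)=L_i(\ft)$ with $\ft=\As\yt+i\omega\alphao\xt$, use linearity to write the error pair as $L_i(f-\ft)$, and apply the two-sided bound \eqref{eq:solutionoperatornormgen2} of Remark \ref{rem:solutionoperatornormgen} to both the error and $(x,y)$ itself. The bookkeeping of the constants $\tfrac{1}{2}$ and $2$ in \eqref{eq:Gmain2_reg2} is also exactly as in the paper.
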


\begin{proof}
Since $\xt$ is very regular, especially $\yt = \alphat\A\xt \in D(\As)$, 
the pair $(\xt,\yt)$ is the exact solution of the problem
$$\As\yt + i\omega\alphao\xt=:\ft , \quad \alphat\A\xt=\yt ,$$
i.e., we have $L_i(\ft) = (\xt,\yt)$. 
Then \eqref{eq:Gmain2_reg1} is given directly by Remark \ref{rem:solutionoperatornormgen}:
$$\normhioomaomo{f-\ft}^2
\leq \ttnorm{(x,y)-(\xt,\yt)}^2 = \ttnorm{L_i(f-\ft)}^2
\leq 2\normhioomaomo{f-\ft}^2$$
The estimate \eqref{eq:Gmain2_reg2} 
follows by Remark \ref{rem:solutionoperatornormgen} as well.
\end{proof}

The square root of the ratio of the bounds in \eqref{eq:Gmain2_reg1} 
is always $\sqrt{2} < 1.42$. The square root of the ratio of the bounds 
in \eqref{eq:Gmain2_reg2} is always $2$. 
However, satisfying the high regularity property required in Theorem \ref{thm:Gmain2_reg} 
may not be convenient for practical calculations. The next result, 
the second main result of the paper, holds for less regular approximations.

\begin{theo} 
\label{thm:Gmain2}
Let $(x,y),(\xt,\yt)\in D(\A)\times D(\As)$ be the exact solution of \eqref{mixedformulation2}
and any conforming approximation, respectively. Then
\begin{align} 
\label{eq:Gmain21}
\frac{\sqrt{2}}{\sqrt{2}+1} \Mi(\xt,\yt)
\leq \ttnorm{(x,y)-(\xt,\yt)}^2 \leq
\frac{\sqrt{2}}{\sqrt{2}-1} \Mi(\xt,\yt)
\end{align}
and the normalized counterpart
\begin{align} 
\label{eq:Gmain22}
\frac{\sqrt{2}}{2(\sqrt{2}+1)} \cdot \frac{\Mi(\xt,\yt)}{\normhioomaomo{f}^2} 
\leq \frac{\ttnorm{(x,y)-(\xt,\yt)}^2}{\ttnorm{(x,y)}^2} \leq
\frac{\sqrt{2}}{\sqrt{2}-1} \cdot \frac{\Mi(\xt,\yt)}{\normhioomaomo{f}^2} 
\end{align}
hold, where
\begin{align} 
\label{eq:Gmain2_}
\Mi(\xt,\yt):=\normhioomaomo{f-i\omega\alphao\xt-\As\yt}^2
+\normhitatmo{\yt-\alphat\A\xt}^2.
\end{align}
\end{theo}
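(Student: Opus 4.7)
My plan is to follow the structure of Theorem \ref{thm:Gmain} as closely as possible, but account for the fact that, due to the imaginary factor $i\omega$, the cross terms will no longer cancel exactly. Starting from \eqref{mixedformulation2}, I would substitute $f = \As y + i\omega\alphao x$ and $y = \alphat \A x$ into the two residuals defining $\Mi$, obtaining
$$f - i\omega\alphao\xt - \As\yt = \As(y-\yt) + i\omega\alphao(x-\xt),\qquad \yt-\alphat\A\xt = -(y-\yt) + \alphat\A(x-\xt).$$
Expanding the two weighted squared norms $\normhioomaomo{\,\cdot\,}^2$ and $\normhitatmo{\,\cdot\,}^2$ and using self-adjointness of $\alphao,\alphat$ together with the partial integration \eqref{partint} to simplify the mixed products, I expect to arrive at
$$\Mi(\xt,\yt) = \ttnorm{(x,y)-(\xt,\yt)}^2 + F,\qquad F := 2\sign(\omega)\,\Im z - 2\Re z,$$
where $z := \scphit{y-\yt}{\A(x-\xt)}$. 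The quantity $F$ is the Case II analogue of the cross terms that vanished in Theorem \ref{thm:Gmain}; here the factor $i\omega$ prevents the cancellation, so $F$ must instead be absorbed by a quantitative bound.

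The key step will be to show $|F|\leq \tfrac{1}{\sqrt{2}}\ttnorm{(x,y)-(\xt,\yt)}^2$. I would first use Cauchy–Schwarz in $\reals^2$ applied to the coefficient vector $(-2,2\sign(\omega))$ to get $|F|\leq 2\sqrt{2}\,|z|$. Then I would bound $|z|$ in \emph{two different ways}: once via Cauchy–Schwarz in $\hit$ (using the $\alphat^{\pm 1/2}$ splitting) together with Young's inequality, and once by rewriting $z=\scphio{\As(y-\yt)}{x-\xt}$ via \eqref{partint} and applying the same procedure in $\hio$ with the $(\norm{\omega}\alphao)^{\pm1/2}$ splitting. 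The first yields $|z|\leq \tfrac{1}{2}\bigl(\normhitatmo{y-\yt}^2 + \normhitat{\A(x-\xt)}^2\bigr)$ and the second $|z|\leq \tfrac{1}{2}\bigl(\normhioomaomo{\As(y-\yt)}^2 + \normhioomao{x-\xt}^2\bigr)$. Adding these and dividing by two gives $|z|\leq \tfrac{1}{4}\ttnorm{(x,y)-(\xt,\yt)}^2$, hence $|F|\leq \tfrac{1}{\sqrt{2}}\ttnorm{(x,y)-(\xt,\yt)}^2$. Setting $N:=\ttnorm{(x,y)-(\xt,\yt)}^2$, the identity $\Mi = N+F$ with $|F|\leq N/\sqrt{2}$ immediately produces $\tfrac{\sqrt{2}-1}{\sqrt{2}}N\leq \Mi\leq\tfrac{\sqrt{2}+1}{\sqrt{2}}N$, which rearranges to \eqref{eq:Gmain21}. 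The normalized inequality \eqref{eq:Gmain22} then follows by combining \eqref{eq:Gmain21} with the two-sided bound $\normhioomaomo{f}^2\leq \ttnorm{(x,y)}^2\leq 2\normhioomaomo{f}^2$ from Remark \ref{rem:solutionoperatornormgen}.

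The main obstacle I anticipate is producing the sharp constant $1/\sqrt{2}$ on $|F|$. A single application of Young's inequality, using only one representation of $z$, gives merely $|F|\leq \sqrt{2}\,N$, which yields $N\geq (\sqrt{2}-1)\Mi$ but \emph{no} useful upper bound on $N$ in terms of $\Mi$. Splitting the information between the $\hit$-representation and the $\hio$-representation of $z$ and averaging the two resulting Young bounds is what halves the constant and delivers the required two-sided control.
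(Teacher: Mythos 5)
Your proposal is correct and takes essentially the same route as the paper's own proof: the expansion $\Mi(\xt,\yt)=\ttnorm{(x,y)-(\xt,\yt)}^2+F$ with a cross term $F$ satisfying $|F|\le 2\sqrt{2}\,|z|$ (the paper derives this same bound via $|\Re|+|\Im|\le\sqrt{2}|\cdot|$ applied to the two equal representations of the duality pairing), followed by the decisive step of averaging the Cauchy--Schwarz/Young bounds taken once in $\hio$ and once in $\hit$, which the paper carries out with a Young parameter $\delta$ subsequently set to $1$. The normalized estimate is obtained in both cases from Remark \ref{rem:solutionoperatornormgen} exactly as you indicate.
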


\begin{proof}
Using \eqref{eq:Gstrong2} and inserting $0=\alphat\A x-y$ we get
\begin{align}
\Mi(\xt,\yt)
&=\normhioomaomo{i\omega\alphao x-i\omega\alphao\xt+\As y-\As\yt}^2
+\normhitatmo{\yt-y+\alphat\A x-\alphat\A\xt}^2 \nonumber \\
&=\normhioomao{x-\xt}^2
+\normhioomaomo{\As(y-\yt)}^2
+2\Re\scp{i\omega\alphao(x-\xt)}{\As(y-\yt)}_{\hio,(\norm{\omega}\alphao)^{-1}} \nonumber \\
&\qquad+\normhitatmo{\yt-y}^2
+\normhitat{\A(x-\xt)}^2
+2\Re\scphitatmo{\yt-y}{\alphat\A(x-\xt)} \nonumber \\
&=\normdaidomaoat{x-\xt}^2+\normdasomaomoatmo{y-\yt}^2 \nonumber \\
&\qquad+2\sign{\omega}\,\Re\scphio{i(x-\xt)}{\As(y-\yt)}
-2\Re\scphit{\A(x-\xt)}{y-\yt} \label{eq:Gmain2_1} .
\end{align}
The last two terms in \eqref{eq:Gmain2_1} can be written as 
(for brevity we use the notation $e:=x-\xt$ and $h:=y-\yt$)
\begin{align}
& 2\sign{\omega}\,\Re(i\scphio{e}{\As h}) -2\Re\scphit{\A e}{h} \nonumber \\
& \qquad = - 2\sign{\omega}\,\Im\scphio{e}{\As h} -2\Re\scphit{\A e}{h} \nonumber \\
& \qquad \ge - 2\norm{\Im\scphio{e}{\As h} } - 2 \norm{\Re\scphit{\A e}{h}} \nonumber \\
& \qquad = -( \ubr{\norm{\Im\scphio{e}{\As h}} +
	\norm{\Re\scphio{e}{\As h}}}_{\leq \sqrt{2}\norm{\scphio{e}{\As h}}} +
	\ubr{\norm{\Im\scphit{\A e}{h}} +
	\norm{\Re\scphit{\A e}{h}}}_{\leq \sqrt{2}\norm{\scphit{\A e}{h}}} )\nonumber \\
& \qquad \ge - \sqrt{2} \left( \norm{\scphio{e}{\As h}} + \norm{\scphit{\A e}{h}} \right) \nonumber \\
& \qquad \ge - \sqrt{2} \left( \normhioomao{e} \normhioomaomo{\As h} + 
	\normhitat{\A e} \normhitatmo{h} \right) \nonumber \\
& \qquad \ge - \sqrt{2} \left( \frac{1}{2\delta} \normhioomao{e}^2 + \frac{\delta}{2} \normhioomaomo{\As h}^2 + 
	\frac{1}{2\delta} \normhitat{\A e}^2 + \frac{\delta}{2} \normhitatmo{h}^2 \right) , \label{eq:Gmain2_2}
\end{align}
for all $\delta>0$. One can repeat these calculations by estimating from above, and arrive at
\begin{align}
& 2\sign{\omega}\,\Re(i\scphio{e}{\As h}) -2\Re\scphit{\A e}{h} \nonumber \\
& \qquad \leq \sqrt{2} \left( \frac{1}{2\delta} \normhioomao{e}^2 + \frac{\delta}{2} \normhioomaomo{\As h}^2 + 
	\frac{1}{2\delta} \normhitat{\A e}^2 + \frac{\delta}{2} \normhitatmo{h}^2 \right) . \label{eq:Gmain2_3}
\end{align}
Together \eqref{eq:Gmain2_1}--\eqref{eq:Gmain2_3} give
\begin{align}
\left(1-\sqrt{2}\frac{1}{2\delta}\right) \normdaidomaoat{x-\xt}^2 +
\left(1-\sqrt{2}\frac{\delta}{2}\right) \normdasomaomoatmo{y-\yt}^2 & \leq \Mi(\xt,\yt) , \label{eq:Gmain2_4} \\
\left(1+\sqrt{2}\frac{1}{2\delta}\right) \normdaidomaoat{x-\xt}^2 +
\left(1+\sqrt{2}\frac{\delta}{2}\right) \normdasomaomoatmo{y-\yt}^2 & \ge \Mi(\xt,\yt) . \label{eq:Gmain2_5}
\end{align}
The estimate \eqref{eq:Gmain21} follows by setting $\delta = 1$ 
in \eqref{eq:Gmain2_4} and \eqref{eq:Gmain2_5}. 
The estimate \eqref{eq:Gmain22} follows by the property \eqref{eq:solutionoperatornormgen2} 
in Remark \ref{rem:solutionoperatornormgen}, completing the proof.
\end{proof}

The square root of the ratio of the upper and lower bound in $\eqref{eq:Gmain21}$ 
is always $1+\sqrt{2} < 2.42$. The square root of the ratio 
of the bounds of the normalized counterpart $\eqref{eq:Gmain22}$ 
is always $2+\sqrt{2} < 3.42$. We can conclude that the bounds 
are close to each other and give reliable information 
of the error of a mixed approximation.

\begin{theo} \label{thm:primaldualseparate}
From the proof of Theorem \ref{thm:Gmain2} we can deduce the following 
a posteriori error estimates for the primal and dual problems.
\begin{itemize}
\item[\bf(i)]
For any $\xt\in D(\A)$ it holds
$\ds\normdaidomaoat{x-\xt}^2
\leq2\Mi(\xt,\psi)$ for any $\psi\in D(\As)$.
\item[\bf(ii)]
For any $\yt\in D(\As)$ it holds
$\ds\normdasomaomoatmo{y-\yt}^2
\leq 2 \Mi(\varphi,\yt)$ for any $\varphi\in D(\A)$.
\end{itemize}
\end{theo}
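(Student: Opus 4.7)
The plan is to reuse the lower-bound inequality \eqref{eq:Gmain2_4} that was obtained in the proof of Theorem \ref{thm:Gmain2}, namely
$$\left(1-\frac{\sqrt{2}}{2\delta}\right)\normdaidomaoat{x-\xt}^2 + \left(1-\frac{\sqrt{2}\,\delta}{2}\right)\normdasomaomoatmo{y-\yt}^2 \leq \Mi(\xt,\yt),$$
which is valid for every $\delta>0$ and every conforming pair $(\xt,\yt)\in D(\A)\times D(\As)$. Instead of balancing the two coefficients by taking $\delta=1$ as in Theorem \ref{thm:Gmain2}, the idea here is to choose $\delta$ so that one of the two coefficients vanishes while the other stays strictly positive. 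Then one of the terms on the left can be dropped as nonnegative, and the remaining term directly yields the desired one-sided estimate.

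For part (i), I would set $\delta=\sqrt{2}$, so that $1-\sqrt{2}\delta/2=0$ while $1-\sqrt{2}/(2\delta)=1/2$. The dual contribution drops out and we obtain
$$\tfrac{1}{2}\normdaidomaoat{x-\xt}^2 \leq \Mi(\xt,\yt),$$
hence $\normdaidomaoat{x-\xt}^2\leq 2\Mi(\xt,\yt)$. Since $\yt\in D(\As)$ was arbitrary in the derivation of \eqref{eq:Gmain2_4}, it may be relabeled as any $\psi\in D(\As)$, giving (i). For part (ii), I would symmetrically set $\delta=1/\sqrt{2}$, which makes the primal coefficient vanish and leaves $1-\sqrt{2}\delta/2=1/2$; an identical argument with $\xt$ relabeled as $\varphi\in D(\A)$ yields (ii).

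There is really no main obstacle here, since all the analytical work has been done in Theorem \ref{thm:Gmain2}; the only point requiring care is to verify that the chosen values of $\delta$ are admissible (both positive) and that the remaining coefficient is strictly positive so that the dropped term is indeed $\geq 0$. Both checks are immediate. I would note in passing that the factor $2$ on the right-hand side is the best one obtainable by this method: optimizing $\delta$ to maximize the surviving coefficient subject to the other being nonnegative gives precisely $1/2$ in each case.
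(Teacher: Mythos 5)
Your proposal is correct and coincides with the paper's own proof: both parts follow from \eqref{eq:Gmain2_4} with the choices $\delta=\sqrt{2}$ and $\delta=1/\sqrt{2}$, respectively, which zero out one coefficient and leave the other equal to $1/2$. Nothing further is needed.
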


\begin{proof}
The estimate (i) follows from \eqref{eq:Gmain2_4} by setting $\delta = \sqrt{2}$, 
and (ii) from \eqref{eq:Gmain2_4} by setting $\delta = 1/\sqrt{2}$.
\end{proof}


\begin{rem}
\label{laxmilgramAsrem2}
\mbox{}
\begin{itemize}
\item[\bf(i)]
Since $y\,\bot_{\alphat^{-1}}\,N(\As)$ by \eqref{varAs2} we get immediately that $y\in\alphat\ol{R(\A)}$
by the Helmholtz decomposition $\hit=N(\As)\oplus_{\alphat^{-1}}\alphat\ol{R(\A)}$.
\item[\bf(ii)] 
If $(\omega\alphao)^{-1}f\in D(\A)$ we have $z:=(\omega\alphao)^{-1}\As y\in D(\A)$ 
and the strong and mixed formulations of \eqref{varAs2} read
\begin{align*}
\A(\omega\alphao)^{-1}\As y+i\alphat^{-1}y&=\A(\omega\alphao)^{-1}f,\\
\A z+i\alphat^{-1}y&=\A(\omega\alphao)^{-1}f,\quad (\omega\alphao)^{-1}\As y=z.
\end{align*}
Then for all $\varphi\in D(\A)$ we have
\begin{align*}
\scphitat{\A z}{\A\varphi}+i\scphioomao{z}{\varphi}
&=-i\scphit{y}{\A\varphi}+i\scphioomao{z}{\varphi}+\scphitat{\A(\omega\alphao)^{-1}f}{\A\varphi}\\
&=\scphitat{\A(\omega\alphao)^{-1}f}{\A\varphi}
\end{align*}
and hence $z\in\big(D(\A)\cap\alphao^{-1} R(\As)\big)\subset D(\A)$ 
is the unique solution of this variational problem.
Furthermore, $\alphat(\A z-\A(\omega\alphao)^{-1}f)\in D(\As)$ 
and $\As\alphat(\A z-\A\alphao^{-1}f)=-i\omega\alphao z$.
If $\alphat\A(\omega\alphao)^{-1}f$ belongs to $D(\As)$, this yields 
$\alphat\A z\in D(\As)$ and the strong equation
$$\As\alphat\A z+i\omega\alphao z=\As\alphat\A(\omega\alphao)^{-1}f.$$
\end{itemize}
\end{rem}


\subsection{Error Indication Properties for PDEs}
\label{subsec:ind}

In this section we assume that the underlying problem is a PDE 
such that $\A$ and $\As$ are differential operators and the Hilbert spaces are scalar, 
vector, or tensor valued $\lt$-spaces, i.e., $\hio=\lt(\om)$ and $\hit=\lt(\om)$. 
Here $\om\subset\reals^d$, $d\geq1$, is a domain.

Let $\elems$ denote a discretization of the domain $\om$ 
into a mesh of non-overlapping elements $\elem$. 
Note that we assume $\bigcup_{\elem\in\elems} \ol{\elem} = \ol{\Omega}$, 
i.e., in particular that the boundary of $\Omega$ is exactly represented by the mesh. 
This is necessary in order to have conforming approximations in the first place: 
They must satisfy exactly the imposed boundary conditions.

Aside from golbal error values we are also interested in estimating 
the error distribution in the mesh $\elems$. In the following we use 
the previously derived error equality and error estimate to define error indicators 
and study their properties.

\subsubsection*{Case I}

We define the following error indicator based on the equality of Theorem \ref{thm:Gmain}:
$$\eta_\elem(\xt,\yt) :=
\sqrt{\norm{f-\alphao\xt-\As\yt}_{\lt(\elem),\alphao^{-1}}^2
+\norm{\yt-\alphat\A\xt}_{\lt(\elem),\alphat^{-1}}^2} $$
The error indicator $\eta_\elem$ will indicate the exact error distribution
$$e_\elem(\xt,\yt)
:=\sqrt{ \norm{x-\xt}_{\lt(\elem),\alphao}^2
+\norm{\A(x-\xt)}_{\lt(\elem),\alphat}^2
+\norm{y-\yt}_{\lt(\elem),\alphat^{-1}}^2
+\norm{\As(y-\yt)}_{\lt(\elem),\alphao^{-1}}^2 }.$$
In the following
$$\eta:=\sqrt{\sum_{\elem\in\elems} \eta_\elem^2}
\qquad \textrm{and} \qquad
e:=\sqrt{\sum_{\elem\in\elems} e_\elem^2} .$$
The error indicator $\eta$ should satisfy the following properties:
\begin{enumerate}
\item The indicator $\eta$ must satisfy the global relation
$\ul{c}\,\eta \leq e \leq \ol{c}\,\eta$
with some constants $\ul{c}>0$ and $\ol{c}>0$. 
The constant $\ul{c}$ is often called the global efficiency constant, 
and $\ol{c}$ the global reliability constant. 
If $\ol{c}$, or an upper bound of it is known, 
the indicator can be used to provide a stopping criterion for adaptive computations.
\item The local indicator $\eta_\elem$ must satisfy
$c_\elem \eta_\elem \leq e_{\elem}$
in all elements $\elem$ in $\elems$ with some constants $c_\elem>0$, 
which are often called the local efficiency constants. 
If $c_\elem$ are of the same magnitude, 
the indicator is then appropriate for estimating 
the error distribution in the mesh, and can then be used for adaptive mesh-refinement.
\end{enumerate}
It is desirable that the constants $\ul{c},\ol{c}$ and $c_\elem$ 
are not dependent on the problem data or the mesh. 
If the constants $\ul{c}$ and $\ol{c}$ are known, 
they give a good idea of the quality of the indicator $\eta$ in a global context. 
It is also desirable that the local constants $c_\elem$ are known for all elements $\elem$. 
The closer the values are to $\ul{c}$, the better.

Note that $\eta(\xt,\yt) = \M(\xt,\yt)^{\nicefrac{1}{2}} = e(\xt,\yt)$, 
so according to Theorem \ref{thm:Gmain} the first property 
is satisfied with constants $\ul{c}=\ol{c}=1$.
This is the best case possible.

We show the second property of local efficiency
by using \eqref{eq:Gstrong} and inserting $0=\alphat\A x-y$ into $\eta_\elem$:
\begin{align*}
\eta_\elem(\xt,\yt)^2
&=\norm{\alphao x-\alphao\xt+\As y-\As\yt}_{\lt(\elem),\alphao^{-1}}^2
+\norm{\yt-y+\alphat\A x-\alphat\A\xt}_{\lt(\elem),\alphat^{-1}}^2\\
&\leq 2 \left( \norm{x-\xt}_{\lt(\elem),\alphao}^2
+\norm{\As(y-\yt)}_{\lt(\elem),\alphao^{-1}}^2
+\norm{\yt-y}_{\lt(\elem),\alphat^{-1}}^2
+\norm{\A(x-\xt)}_{\lt(\elem),\alphat}^2 \right),
\end{align*}
which gives us
$$\frac{1}{\sqrt{2}} \, \eta_\elem(\xt,\yt) \leq e_\elem(\xt,\yt).$$
The indicator $\eta$ then satisfies the second property 
with the constant $c_\elem = 1/\sqrt{2} > 0.7$ for all elements $\elem\in\elems$. 
This constant is rather sharp, since $\ul{c} = \ol{c} = 1$. 
This means that $\eta$ provides a good error indicator 
for guiding mesh-adaptive methods for mixed approximations.

\subsubsection*{Case II}

We define the following error indicator based on the estimate of Theorem \ref{thm:Gmain2}:
$$\eta_{i,\elem}(\xt,\yt) :=
\sqrt{\norm{f-i\omega\alphao\xt-\As\yt}_{\lt(\elem),(\norm{\omega}\alphao)^{-1}}^2
+\norm{\yt-\alphat\A\xt}_{\lt(\elem),\alphat^{-1}}^2}$$
The error indicator $\eta_{i,\elem}$ will indicate the exact error distribution
$$e_{i,\elem}(\xt,\yt)
:= \sqrt{ \norm{x-\xt}_{\lt(\elem),\norm{\omega}\alphao}^2
+\norm{\A(x-\xt)}_{\lt(\elem),\alphat}^2 \\
+\norm{y-\yt}_{\lt(\elem),\alphat^{-1}}^2
+\norm{\As(y-\yt)}_{\lt(\elem),(\norm{\omega}\alphao)^{-1}}^2 }.$$
In the following
$$\eta_i:=\sqrt{\sum_{\elem\in\elems} \eta_{i,\elem}^2}
\qquad \textrm{and} \qquad
e_i:=\sqrt{\sum_{\elem\in\elems} e_{i,\elem}^2} .$$
Note that $\eta_i(\xt,\yt) = \Mi(\xt,\yt)^{\nicefrac{1}{2}}$, 
so according to Theorem \ref{thm:Gmain2} the first property is satisfied with constants 
$$	\ul{c}=\sqrt{\frac{\sqrt{2}}{\sqrt{2}+1}} > 0.76
	\qquad \textrm{and} \qquad
	\ol{c}=\sqrt{\frac{\sqrt{2}}{\sqrt{2}-1}} < 1.85,$$
with ratio $1+\sqrt{2} < 2.42$.

We show the second property of local efficiency
by using \eqref{eq:Gstrong2} and inserting $0=\alphat\A x-y$ into $\eta_{i,\elem}$:
\begin{align*}
\eta_{i,\elem}(\xt,\yt)^2
&=\norm{i\omega\alphao x-i\omega\alphao\xt+\As y-\As\yt}_{\lt(\elem),(\norm{\omega}\alphao)^{-1}}^2
+\norm{\yt-y+\alphat\A x-\alphat\A\xt}_{\lt(\elem),\alphat^{-1}}^2\\
&\leq 2 \left( \norm{x-\xt}_{\lt(\elem),\norm{\omega}\alphao}^2
+\norm{\As(y-\yt)}_{\lt(\elem),(\norm{\omega}\alphao)^{-1}}^2
+\norm{\yt-y}_{\lt(\elem),\alphat^{-1}}^2
+\norm{\A(x-\xt)}_{\lt(\elem),\alphat}^2 \right),
\end{align*}
which gives us
$$\frac{1}{\sqrt{2}} \, \eta_{i,\elem}(\xt,\yt) \leq e_{i,\elem}(\xt,\yt) .$$
The indicator $\eta_i$ then satisfies the second property 
with the constant $c_\elem = 1/\sqrt{2} > 0.7$ for all elements $\elem\in\elems$. 
This constant is again rather sharp, since $0.76 < \ul{c} < \ol{c} < 1.85$. 
This means that $\eta_i$ provides a good error indicator 
for guiding mesh-adaptive methods for mixed approximations.


\subsection{Motivation: Error Control for Time Dependent PDEs}
\label{subsec:timediscr}

As mentioned in the introduction, a motivation to study a posteriori error estimation 
for the two classes of problems considered in this paper 
comes from time dependent partial differential equations, more precisely
from their time discretizations.

\subsubsection*{Case I}

A main application of our error equality of Theorem \ref{thm:Gmain} might be that equations of the type
\begin{align}
\label{AsAeqapp}
\As\alphat\A x+\alphao x&=f
\end{align}
naturally occur in many types of time discretizations, 
e.g. for linear parabolic heat type equations or linear hyperbolic wave propagation type equations.

Let us consider the linear parabolic heat type equation
\begin{align}
\label{dtAsA}
(\p_{t}+\As\A)x=f,
\end{align}
generalizing the most prominent example of the heat equation
$$(\p_{t}-\Delta)u
=(\p_{t}-\div\na)u=g$$
with appropriate boundary and initial conditions.
A standard implizit time discretization for \eqref{dtAsA} is e.g. the backward Euler scheme, yielding
$$\delta_{n}^{-1}(x_{n}-x_{n-1})+\As\A x_{n}=f_{n},\quad
\delta_{n}:=t_{n}-t_{n-1},$$
and hence \eqref{AsAeqapp} is recovered by
$$\As\A x_{n}+\delta_{n}^{-1}x_{n}=\tilde{f}_{n}:=f_{n}-\delta_{n}^{-1}x_{n-1}.$$
We note that our arguments extend to `all' practically used time discretizations.
Functional a posteriori error estimates for parabolic equations can be found e.g.
in \cite{repinbookone,NeittaanmakiRepin2004}.

A large class of linear wave propagation models, like electromagnetics or acoustics, have the structure 
\begin{align}
\label{dtM}
(\p_{t}\Lambda^{-1}+\Max)\begin{bmatrix}x\\y\end{bmatrix}
=\begin{bmatrix}g\\h\end{bmatrix},\quad
\Max=\begin{bmatrix}0&-\As\\\A&0\end{bmatrix},\quad
\Lambda=\begin{bmatrix}\lambda_{1}&0\\0&\lambda_{2}\end{bmatrix}
\end{align}
or more explicit
\begin{align}
\label{dteq}
\p_{t}\lambda_{1}^{-1}x-\As y=g,\quad
\p_{t}\lambda_{2}^{-1}y+\A x=h
\end{align}
completed by appropriate initial conditions.
Often the material is assumed to be time-independent, i.e., $\Lambda$ does not depend on time.
In this case $i\Lambda\Max$ is selfadjoint in the proper Hilbert spaces 
and the solution theory follows immediately 
by the spectral theorem (variation in constant formula) or by semi group theory.
We note that formally the second order wave equation 
$$\big(\p_{t}^2-(\Lambda\Max)^2\big)\begin{bmatrix}x\\y\end{bmatrix}
=\begin{bmatrix}\tilde{g}\\\tilde{h}\end{bmatrix}
:=(\p_{t}-\Lambda\Max)\Lambda\begin{bmatrix}g\\h\end{bmatrix},\quad
(\Lambda\Max)^2
=\begin{bmatrix}-\lambda_{1}\As\lambda_{2}\A&0\\0&-\lambda_{2}\A\lambda_{1}\As\end{bmatrix}$$
holds, i.e., component wise 
$$(\p_{t}^2+\lambda_{1}\As\lambda_{2}\A)x=\tilde{g},\quad
(\p_{t}^2+\lambda_{2}\A\lambda_{1}\As)y=\tilde{h}.$$
Hence the linear hyperbolic wave type equation
\begin{align}
\label{dtsqAsA}
(\p_{t}^2+\As\A)x=f
\end{align}
pops up, generalizing the most prominent example of the wave equation
$$(\p_{t}^2-\Delta)u
=(\p_{t}^2-\div\na)u=j$$
with appropriate boundary and initial conditions.
A standard implizit time discretization for \eqref{dteq} is e.g. the backward Euler scheme, i.e.,
$$\delta_{n}^{-1}\lambda_{1}^{-1}(x_{n}-x_{n-1})-\As y_{n}=g_{n},\quad
\delta_{n}^{-1}(y_{n}-y_{n-1})+\lambda_{2}\A x_{n}=\lambda_{2}h_{n}.$$
Hence, we obtain e.g. for $x_{n}$
$$\As\lambda_{2}\A x_{n}+\delta_{n}^{-2}\lambda_{1}^{-1}x_{n}
=f_{n}:=\As(\lambda_{2}h_{n}+\delta_{n}^{-1}y_{n-1})+\delta_{n}^{-2}\lambda_{1}^{-1}x_{n-1} + \delta_n^{-1} g_n$$
provided that $\lambda_{2}h_{n}\in D(\As)$.
Therefore \eqref{AsAeqapp} holds for $x_{n}$ with e.g. 
$\alphao=\delta_{n}^{-2}\lambda_{1}^{-1}$ and $\alphat=\lambda_{2}$.
Of course, a similar equation holds for $y_{n}$ as well.
We note that our arguments extend to `all' practically used time discretizations.
Functional a posteriori error estimates for wave equations can be found 
in \cite{repinapostwave,paulyrepinrossihypmax}.

\subsubsection*{Case II}

A main application of our two-sided error estimate of Theorem \ref{thm:Gmain2} 
might be that equations of the type
\begin{align}
\label{AsAieqapp}
\As\alphat\A x+i\omega\alphao x&=f
\end{align}
naturally occur in time discretizations for the eddy current problem in electromagnetics.
Maxwell's equations are hyperbolic and read
\begin{align*}
\p_{t}D-\rot H&=J=j+\sigma E,&
\div D&=\rho,&
D&=\eps E,\\
\p_{t}B+\rot E&=0,&
\div B&=0,&
B&=\mu H
\end{align*}
with appropriate boundary and initial conditions. These equations
can be written in the style of \eqref{dtM} as
\begin{align*}
\Big(\p_{t}\begin{bmatrix}\eps&0\\0&\mu\end{bmatrix}
+\begin{bmatrix}0&-\rot\\\rot&0\end{bmatrix}
-\begin{bmatrix}\sigma&0\\0&0\end{bmatrix}\Big)
\begin{bmatrix}E\\H\end{bmatrix}
&=\begin{bmatrix}j\\0\end{bmatrix}.
\end{align*}
Let us assume that $\eps$, $\mu$ and $\sigma$ are independent of time. Then, formally, we have
\begin{align*}
\p_{t}^2\eps E
&=\rot\mu^{-1}\p_{t}B+\p_{t}j+\p_{t}\sigma E
=-\rot\mu^{-1}\rot E+\p_{t}J,\\
\p_{t}^2\mu H
&=-\rot\eps^{-1}\p_{t}D
=-\rot\eps^{-1}\rot H-\rot\eps^{-1}J,
\end{align*}
i.e., we get the wave equations
$$(\p_{t}^2
+\eps^{-1}\rot\mu^{-1}\rot)E
=\p_{t}\eps^{-1}J,\quad
(\p_{t}^2
+\mu^{-1}\rot\eps^{-1}\rot)H
=-\mu^{-1}\rot\eps^{-1}J$$
as another example of \eqref{dtsqAsA}.
The eddy current model neglects time variations of the electric field, i.e., assumes $\p_{t}D=\p_{t}\eps E=0$,
and hence leads to the parabolic equation
$$\sigma\p_{t}E
=-\rot\mu^{-1}\p_{t}B-F
=\rot\mu^{-1}\rot E-F,\quad
F:=\p_{t}j,$$
i.e.,
$$-\sigma\p_{t}E+\rot\mu^{-1}\rot E=F.$$
A time-harmonic ansatz leads to 
$$\rot\mu^{-1}\rot\tilde{E}
+i\omega\sigma\tilde{E}
=\tilde{F}$$
as a prominent example of \eqref{AsAieqapp}.


\section{Applications} \label{sec:A}

In this section we will discuss some standard applications.
Let $\om\subset\reals^d$, $d\geq1$, be a bounded Lipschitz domain with boundary $\ga$.
Moreover, let $\gad$ be an open subset of $\ga$ and $\gan:=\ga\setminus\ol{\gad}$
its complement. We will denote by $n$ the outward unit normal of the boundary $\ga$.
We note that our results extend to unbounded domains without any changes.

We denote by $\scplt{\,\cdot\,}{\,\cdot\,}$ and $\normlt{\,\cdot\,}$ the inner product and the norm 
in $\lt$ for scalar-, vector- and matrix-valued functions. 
Throughout this section we will not indicate the dependence on $\om$
in our notations of the functional spaces.

For the first application, the reaction-diffusion problem, 
we repeat all the results of Section \ref{sec:G}. 
For the rest of the applications we will repeat 
only the main results of Theorems \ref{thm:Gmain} 
and \ref{thm:Gmain2} for the sake of brevity.


\subsection{Reaction-Diffusion} \label{subsec:RD}

We define the usual Sobolev spaces\footnote{The space $\d$ is often denoted by $\hilbert(\div)$ in the literature.}
$$\ho:=\set{\varphi\in\lt}{\na\varphi\in\lt},\quad\d:=\set{\psi\in\lt}{\div\psi\in\lt},$$
and the spaces
$$\hogad:=\ol{\cigad}^{\ho},\quad
\dgan:=\ol{\cigan}^{\d},$$
were $\cigad$ resp. $\cigan$ is the space of smooth test functions resp. vector fields 
having supports bounded away from $\gad$ resp. $\gan$.
These are Hilbert spaces equipped with the graph norms denoted by
$\normho{\,\cdot\,}$, $\normd{\,\cdot\,}$, respectively.
The following table shows the relation to the notation of Section \ref{sec:G}:
\begin{center}\begin{tabular}{c|c||c|c||c|c||c|c}
$\alphao$ & $\alphat$ & $\A$ & $\As$ & $\hio$ & $\hit$ & $D(\A)$ & $D(\As)$ \\
\hline
$\rho$ & $\Amat$ & $\na$ & $-\div$ & $\lt$ & $\lt$ & $\hogad$ & $\dgan$
\end{tabular}\end{center}
We note that indeed $D(\As)=\dgan$ holds for Lipschitz domains, see e.g. 
\cite{jochmanncompembmaxmixbc,bauerpaulyschomburgmcpweaklip}. 
The relation \eqref{partint} reads now
$$\forall\,\varphi\in\hogad\quad\forall\,\psi\in\dgan\qquad
\scplt{\na\varphi}{\psi}=-\scplt{\varphi}{\div\psi}.$$

\subsubsection*{Case I}

Find the scalar potential $u\in\ho$, such that
\begin{align}
-\div\Amat\na u+\rho\,u&=f&
\textrm{in }&\om,\nonumber\\
u&=0&
\textrm{on }&\gad,\label{eq:RD}\\
n\cdot\Amat\na u&=0&
\textrm{on }&\gan\nonumber.
\end{align}
The quadratic diffusion matrix $\Amat\in\li$ is symmetric, real valued, and uniformly positive definite. 
The real valued reaction coefficient $\rho\ge\rho_0>0$ belongs to $\li$
and the source $f$ to $\lt$. The dual variable for this problem is the flux $p=\Amat\na u\in\d$.
The mixed formulation of \eqref{eq:RD} reads: Find $(u,p)\in\hogad\times\dgan$ such that
\begin{align}
\label{eq:RDMixed}
-\div p+\rho\,u=f,\quad\Amat\na u=p \qquad \textrm{in } \om.
\end{align}
The primal and dual variational problems are: Find $(u,p)\in\hogad\times\dgan$ such that
\begin{align*}
\forall\,\varphi&\in\hogad&
\scp{\na u}{\na\varphi}_{\lt,\Amat}+\scp{u}{\varphi}_{\lt,\rho}
&=\scplt{f}{\varphi},\\
\forall\,\psi&\in\dgan&
\scp{\div p}{\div\psi}_{\lt,\rho^{-1}}+\scp{p}{\psi}_{\lt,\Amat^{-1}}
&=-\scp{f}{\div\psi}_{\lt,\rho^{-1}}.
\end{align*}
Considering the norms we have
\begin{align*}
\norm{u}_{\ho,\rho,\Amat}^2
&=\norm{u}_{\lt,\rho}^2
+\norm{\na u}_{\lt,\Amat}^2,\\
\norm{p}_{\d,\rho^{-1},\Amat^{-1}}^2
&=\norm{p}_{\lt,\Amat^{-1}}^2
+\norm{\div p}_{\lt,\rho^{-1}}^2,\\
\tnorm{(u,p)}^2
&=\norm{u}_{\ho,\rho,\Amat}^2
+\norm{p}_{\d,\rho^{-1},\Amat^{-1}}^2.
\end{align*}

Now Remark \ref{rem:isometrygen}, Theorem \ref{thm:Gmain_reg}, 
Theorem \ref{thm:Gmain}, and Corollary \ref{cor:maintheocor} read:

\begin{rem}
We note $\norm{u}_{\ho,\rho,\Amat}\leq\norm{f}_{\lt,\rho^{-1}}$ 
and $\norm{p}_{\d,\rho^{-1},\Amat^{-1}}\leq\norm{f}_{\lt,\rho^{-1}}$ and indeed
$$\tnorm{(u,p)}=\norm{f}_{\lt,\rho^{-1}}.$$
The solution operator $L:\lt\to\hogad\times\dgan;f\mapsto(u,p)$ is an isometry, i.e. $\norm{L}=1$.
\end{rem}

\begin{theo}
\label{thm:RD_reg}
Let $(u,p)\in\hogad\times\dgan$ be the exact solution of \eqref{eq:RDMixed}.
Let $\ut \in \hogad$ and $\pt=\alpha\na\ut \in \dgan$.
Then, for the mixed approximation $(\ut,\pt)$ we have
$$\tnorm{(u,p)-(\ut,\pt)}^2
=\Rrd(\ut,\pt),\quad
\frac{\tnorm{(u,p)-(\ut,\pt)}^2}{\tnorm{(u,p)}^2}
=\frac{\Rrd(\ut,\pt)}{\norm{f}_{\lt,\rho^{-1}}^2},$$
where $\Rrd(\ut,\pt)=\norm{f-\rho\,\ut+\div\pt}_{\lt,\rho^{-1}}^2$.
\end{theo}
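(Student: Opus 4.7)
The plan is to apply Theorem \ref{thm:Gmain_reg} directly via the notational dictionary the authors have just tabulated. The entire abstract development of Section \ref{sec:G} was designed precisely so that concrete statements like Theorem \ref{thm:RD_reg} follow without any new work beyond identifying the ingredients.

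First I would verify that the reaction--diffusion data satisfy the abstract hypotheses: the operator $\A=\na$ defined on $\hogad$ into $\lt$ is densely defined and closed with adjoint $\As=-\div$ defined on $\dgan$ (the identification $D(\As)=\dgan$ on Lipschitz domains with mixed boundary pieces being exactly the content cited from \cite{jochmanncompembmaxmixbc,bauerpaulyschomburgmcpweaklip}), while the coefficients $\rho$ and $\Amat$ are, by the $\li$-boundedness and uniform positivity assumptions, self-adjoint, positive, and topologically isomorphic on $\lt$. Thus the identifications
$$\alphao \leftrightarrow \rho,\qquad \alphat \leftrightarrow \Amat,\qquad \hio=\hit \leftrightarrow \lt,\qquad D(\A) \leftrightarrow \hogad,\qquad D(\As) \leftrightarrow \dgan$$
put us exactly in the setting of Section \ref{sec:G}. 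In particular, $(u,p)$ solves \eqref{eq:RDMixed} iff it solves \eqref{mixedformulation}.

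Second, the hypothesis $\pt=\Amat\na\ut\in\dgan$ is nothing but the regularity condition $\yt=\alphat\A\xt\in D(\As)$ from Theorem \ref{thm:Gmain_reg}. Invoking that theorem therefore gives
$$\tnorm{(u,p)-(\ut,\pt)}^2=\normhioaomo{f-\alphao\ut-\As\pt}^2.$$
Translating the right-hand side back to the PDE notation and using $\As=-\div$, so that $-\As\pt=\div\pt$, together with $\normhioaomo{\,\cdot\,}=\norm{\,\cdot\,}_{\lt,\rho^{-1}}$, yields exactly $\Rrd(\ut,\pt)$. This proves the first equality.

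Third, the normalized identity follows immediately by dividing by $\tnorm{(u,p)}^2$ and invoking the isometry property stated in the Remark that precedes Theorem \ref{thm:RD_reg} (itself the translated form of Remark \ref{rem:isometrygen}), which gives $\tnorm{(u,p)}^2=\norm{f}_{\lt,\rho^{-1}}^2$. There is no genuine obstacle here: the only nontrivial point is keeping careful track of the sign conventions (the minus in $\As=-\div$ swaps into a plus in front of $\div\pt$) and of the weights in the translated norms. The whole content of the theorem is already embedded in the abstract framework; the proof is purely a bookkeeping exercise.
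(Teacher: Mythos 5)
Your proposal is correct and is exactly the paper's route: the paper presents Theorem \ref{thm:RD_reg} as the direct instantiation of Theorem \ref{thm:Gmain_reg} via the tabulated dictionary ($\alphao=\rho$, $\alphat=\Amat$, $\A=\na$, $\As=-\div$, $D(\A)=\hogad$, $D(\As)=\dgan$), with the normalized identity coming from the isometry $\tnorm{(u,p)}=\norm{f}_{\lt,\rho^{-1}}$. Your bookkeeping of the sign in $-\As\pt=\div\pt$ and of the weighted norms matches the paper's statement, so nothing further is needed.
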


\begin{theo} 
\label{thm:RD}
Let $(u,p),(\ut,\pt)\in\hogad\times\dgan$ 
be the exact solution of \eqref{eq:RDMixed} 
and any approximation, respectively. Then
$$\tnorm{(u,p)-(\ut,\pt)}^2
=\Mrd(\ut,\pt),\quad
\frac{\tnorm{(u,p)-(\ut,\pt)}^2}{\tnorm{(u,p)}^2}
=\frac{\Mrd(\ut,\pt)}{\norm{f}_{\lt,\rho^{-1}}^2}$$
hold, where $\Mrd(\ut,\pt)=\norm{f-\rho\,\ut+\div\pt}_{\lt,\rho^{-1}}^2
+\norm{\pt-\Amat\na\ut}_{\lt,\Amat^{-1}}^2$.
\end{theo}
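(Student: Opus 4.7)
The plan is to deduce Theorem~\ref{thm:RD} as a direct specialization of the abstract Theorem~\ref{thm:Gmain}. The translation table preceding the statement already tells us exactly which abstract quantities correspond to which PDE objects, so the work reduces to (a) verifying that the abstract hypotheses are met and (b) unwinding the notation so that $\M$ becomes $\Mrd$ and $\tnorm{\,\cdot\,}$ becomes the combined norm displayed in this subsection.

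First I would check the hypotheses. The operator $\na\colon \hogad \subset \lt \to \lt$ is densely defined and closed, and its adjoint is $-\div\colon \dgan \subset \lt \to \lt$; this is exactly the statement noted after the table (valid on bounded Lipschitz domains with mixed boundary parts $\gad,\gan$, see \cite{jochmanncompembmaxmixbc,bauerpaulyschomburgmcpweaklip}). The coefficient $\rho \in \li$ with $\rho \geq \rho_0 > 0$ defines a self-adjoint, positive topological isomorphism on $\lt$ via pointwise multiplication, and the same is true for the symmetric, uniformly positive definite matrix $\Amat \in \li$ acting on vector fields in $\lt$. Thus the abstract setting of Section~\ref{sec:G} with $\alphao = \rho$, $\alphat = \Amat$, $\A = \na$, $\As = -\div$, $\hio = \hit = \lt$, $D(\A) = \hogad$, $D(\As) = \dgan$ is in force, and \eqref{eq:RDMixed} is exactly \eqref{mixedformulation} in this translation.

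Second I would read off the equivalence of the error functionals. Substituting the above identifications into \eqref{eq:Gmain} gives
\begin{align*}
\M(\ut,\pt)
&= \norm{f - \rho\,\ut - (-\div)\pt}_{\lt,\rho^{-1}}^{2}
 + \norm{\pt - \Amat\na\ut}_{\lt,\Amat^{-1}}^{2}\\
&= \norm{f - \rho\,\ut + \div\pt}_{\lt,\rho^{-1}}^{2}
 + \norm{\pt - \Amat\na\ut}_{\lt,\Amat^{-1}}^{2}
= \Mrd(\ut,\pt),
\end{align*}
while the combined norm $\tnorm{(\ut,\pt)}^2$ from Section~\ref{sec:G} unfolds, with the same substitutions, into precisely the sum $\norm{\ut}_{\lt,\rho}^2 + \norm{\na\ut}_{\lt,\Amat}^2 + \norm{\pt}_{\lt,\Amat^{-1}}^2 + \norm{\div\pt}_{\lt,\rho^{-1}}^2$ introduced just above the statement. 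Hence the equality \eqref{eq:Gmain1} becomes the first equality in Theorem~\ref{thm:RD}.

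Finally, the normalized counterpart follows from the PDE version of the isometry property: since $(u,p) = L(f)$ with $L$ the solution operator, the preceding remark gives $\tnorm{(u,p)}^{2} = \norm{f}_{\lt,\rho^{-1}}^{2}$, which after division yields the second equality. There is no genuine obstacle in this argument; the only point requiring care is the mixed-boundary adjointness $(\na)^{*} = -\div$ with domains $\hogad$ and $\dgan$, which is why the references to \cite{jochmanncompembmaxmixbc,bauerpaulyschomburgmcpweaklip} are invoked. Everything else is a bookkeeping exercise of rewriting the abstract weighted norms in the PDE notation.
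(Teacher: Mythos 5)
Your proposal is correct and follows exactly the route the paper intends: Theorem \ref{thm:RD} is presented in the paper as a direct specialization of Theorem \ref{thm:Gmain} via the translation table ($\A=\na$, $\As=-\div$, $\alphao=\rho$, $\alphat=\Amat$, $D(\A)=\hogad$, $D(\As)=\dgan$), with no separate proof given. Your verification of the hypotheses (closedness and adjointness of $\na$ and $-\div$ on the mixed-boundary domains, and that $\rho$, $\Amat$ are self-adjoint positive topological isomorphisms) plus the notational unwinding and the appeal to the isometry property for the normalized form is exactly the bookkeeping the paper leaves implicit.
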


\begin{cor}
Theorem \ref{thm:RD} provides the well known a posteriori error estimates 
for the primal and dual problems. 
\begin{itemize}
\item[\bf(i)]
For any $\ut\in\hogad$ it holds
$\ds\norm{u-\ut}_{\ho,\rho,\Amat}^2
=\min_{\psi\in\dgan}\Mrd(\ut,\psi)
=\Mrd(\ut,p)$.
\item[\bf(ii)]
For any $\pt\in\dgan$ it holds
$\ds\norm{p-\pt}_{\d,\rho^{-1},\Amat^{-1}}^2
=\min_{\varphi\in\hogad}\Mrd(\varphi,\pt)
=\Mrd(u,\pt)$.
\end{itemize}
\end{cor}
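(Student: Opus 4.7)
The plan is to deduce this corollary as a direct specialization of the abstract Corollary \ref{cor:maintheocor} to the reaction-diffusion setting, using the dictionary $\alphao=\rho$, $\alphat=\Amat$, $\A=\na$, $\As=-\div$, $D(\A)=\hogad$, $D(\As)=\dgan$ established in the table above. Since Theorem \ref{thm:RD} has already been stated as the specialization of Theorem \ref{thm:Gmain} to this setting, the argument of Corollary \ref{cor:maintheocor} transfers verbatim.

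More concretely, for part (i) I would first invoke Theorem \ref{thm:RD} with arbitrary $\psi\in\dgan$ in place of $\pt$, giving the equality
\[
\norm{u-\ut}_{\ho,\rho,\Amat}^2+\norm{p-\psi}_{\d,\rho^{-1},\Amat^{-1}}^2 = \tnorm{(u,p)-(\ut,\psi)}^2 = \Mrd(\ut,\psi).
\]
Dropping the non-negative dual term on the left gives $\norm{u-\ut}_{\ho,\rho,\Amat}^2 \leq \Mrd(\ut,\psi)$ for every $\psi\in\dgan$, hence also for the infimum. Then I would specialize $\psi=p$: because $(u,p)$ solves \eqref{eq:RDMixed}, the dual residual $p-\psi = 0$, so the dropped term vanishes and the inequality becomes the equality $\norm{u-\ut}_{\ho,\rho,\Amat}^2 = \Mrd(\ut,p)$. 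This shows simultaneously that the infimum is attained at $\psi=p$ and that the infimum equals $\norm{u-\ut}_{\ho,\rho,\Amat}^2$, establishing the chain of equalities in (i).

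Part (ii) is entirely symmetric: apply Theorem \ref{thm:RD} with arbitrary $\varphi\in\hogad$ in place of $\ut$, drop the non-negative primal term $\norm{u-\varphi}_{\ho,\rho,\Amat}^2$ to obtain $\norm{p-\pt}_{\d,\rho^{-1},\Amat^{-1}}^2 \leq \Mrd(\varphi,\pt)$, and then substitute $\varphi=u$ to achieve equality, so that the infimum is realized at $\varphi=u$ with value $\norm{p-\pt}_{\d,\rho^{-1},\Amat^{-1}}^2$.

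There is really no obstacle here beyond checking the verbatim transfer: the only content is the observation that the combined-norm error splits into two non-negative pieces, one of which can be made to vanish by plugging in the true solution of the other variable. Verifying that the abstract setup applies and that $\Mrd$ is the correct specialization of $\M$ suffices; no new analytic ingredient is required.
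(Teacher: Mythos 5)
Your proposal is correct and follows essentially the same route as the paper: the paper proves the abstract Corollary \ref{cor:maintheocor} by exactly this argument (drop the non-negative term in the combined-norm equality of Theorem \ref{thm:Gmain} to get the inequality for all test functions, then substitute the exact solution of the other variable to attain equality), and the reaction-diffusion version is just the stated specialization via the dictionary $\alphao=\rho$, $\alphat=\Amat$, $\A=\na$, $\As=-\div$. No gaps.
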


Error indication properties of Section \ref{subsec:ind} hold as well:

\begin{rem}
Let $\elems$ denote a discretization of the domain $\om$ 
into a mesh of non-overlapping elements $\elem$ 
such as described in Section \ref{subsec:ind}. 
We define the following error indicator using 
the functional of Theorem \ref{thm:RD}:
$$\eta_\elem(\ut,\pt) :=
\sqrt{\norm{f-\rho\,\ut+\div\pt}_{\lt(\elem),\rho^{-1}}^2
+\norm{\pt-\Amat\na\ut}_{\lt(\elem),\Amat^{-1}}^2},
\qquad
\eta:=\sqrt{\sum_{\elem\in\elems} \eta_\elem^2}$$
The error indicator $\eta$ will indicate the exact error distribution
$$e_\elem(\xt,\yt)
:=\sqrt{ \norm{u-\ut}_{\ho(T),\rho,\Amat}^2
+\norm{p-\pt}_{\d(T),\rho^{-1},\Amat^{-1}}^2 },
\qquad
e:=\sqrt{\sum_{\elem\in\elems} e_\elem^2}.$$
As shown in Section \ref{subsec:ind}, the global reliability constant, 
global efficiency constant, and the local efficiency constants are
$$	\ol{c} = 1, \qquad \ul{c} = 1, \qquad c_\elem = \frac{1}{\sqrt{2}} > 0.7 \quad \forall\,\elem\in\elems,$$
respectively.
\end{rem}

Related results and numerical tests for exterior domains can be found in e.g.
\cite{paulyrepinell,malimuzalevskiypaulyextell}.

\subsubsection*{Case II}

Find the scalar potential $u\in\ho$, such that
\begin{align}
-\div\Amat\na u+i\omega\rho\,u&=f&
\textrm{in }&\om,\nonumber\\
u&=0&
\textrm{on }&\gad,\label{eq:RDi}\\
n\cdot\Amat\na u&=0&
\textrm{on }&\gan\nonumber,
\end{align}
where $\Amat,\rho$, and $f$ are as before, and $\omega \in \realsnotz$.
The dual variable for this problem is the flux $p=\Amat\na u\in\d$.
The mixed formulation of \eqref{eq:RDi} reads: Find $(u,p)\in\hogad\times\dgan$ such that
\begin{equation}
\label{eq:RDiMixed}
-\div p+i\omega\rho\,u=f,\quad\Amat\na u=p \qquad \textrm{in } \om.
\end{equation}
Considering the norms we have
\begin{align*}
\norm{u}_{\ho,\norm{\omega}\rho,\Amat}^2
&=\norm{u}_{\lt,\norm{\omega}\rho}^2
+\norm{\na u}_{\lt,\Amat}^2,\\
\norm{p}_{\d,(\norm{\omega}\rho)^{-1},\Amat^{-1}}^2
&=\norm{p}_{\lt,\Amat^{-1}}^2
+\norm{\div p}_{\lt,(\norm{\omega}\rho)^{-1}}^2,\\
\ttnorm{(u,p)}^2
&=\norm{u}_{\ho,\norm{\omega}\rho,\Amat}^2
+\norm{p}_{\d,(\norm{\omega}\rho)^{-1},\Amat^{-1}}^2.
\end{align*}
The primal and dual variational problems are: Find $(u,p)\in\hogad\times\dgan$ such that
\begin{align*}
\forall\,\varphi&\in\hogad&
\scp{\na u}{\na\varphi}_{\lt,\Amat}+i\scp{u}{\varphi}_{\lt,\omega\rho}
&=\scplt{f}{\varphi},\\
\forall\,\psi&\in\dgan&
\scp{\div p}{\div\psi}_{\lt,(\omega\rho)^{-1}}+i\scp{p}{\psi}_{\lt,\Amat^{-1}}
&=-\scp{f}{\div\psi}_{\lt,(\omega\rho)^{-1}}.
\end{align*}

Now Remark \ref{rem:solutionoperatornormgen}, Theorem \ref{thm:Gmain2_reg}, 
Theorem \ref{thm:Gmain2}, and Theorem \ref{thm:primaldualseparate} read:

\begin{rem}
We note $\norm{u}_{\ho,\norm{\omega}\rho,\Amat}\leq\sqrt{2}\norm{f}_{\lt,(\norm{\omega}\rho)^{-1}}$ 
and $\norm{p}_{\d,(\norm{\omega}\rho)^{-1},\Amat^{-1}}\leq\sqrt{2}\norm{f}_{\lt,(\norm{\omega}\rho)^{-1}}$ 
and indeed
$$\norm{f}_{\lt,(\norm{\omega}\rho)^{-1}}\leq\ttnorm{(u,p)}\leq\sqrt{2}\norm{f}_{\lt,(\norm{\omega}\rho)^{-1}}.$$
The norm of the solution operator $L_i:\lt\to\hogad\times\dgan;f\mapsto(u,p)$ 
then satisfies $1\leq\norm{L_i}\leq\sqrt{2}$.
\end{rem}

\begin{theo}
\label{thm:RDi_reg}
Let $(u,p)\in\hogad\times\dgan$ be the exact solution of \eqref{eq:RDMixed}.
Let $\ut \in \hogad$ and $\pt=\alpha\na\ut \in \dgan$.
Then, for the mixed approximation $(\ut,\pt)$ we have
$$\Rrdi(\ut,\pt)
\leq \ttnorm{(u,p)-(\ut,\pt)}^2
\leq 2 \Rrdi(\ut,\pt)$$
and
$$\frac{1}{2} \cdot \frac{\Rrdi(\ut,\pt)}{\norm{f}_{\lt,(\norm{\omega}\rho)^{-1}}^2}	
\leq \frac{\ttnorm{(u,p)-(\ut,\pt)}^2}{\ttnorm{(u,p)}^2}
\leq 2 \frac{\Rrdi(\ut,\pt)}{\norm{f}_{\lt,(\norm{\omega}\rho)^{-1}}^2} ,	$$
where $\Rrdi(\ut,\pt)=\norm{f-i\omega\rho\,\ut+\div\pt}_{\lt,(\norm{\omega}\rho)^{-1}}^2$.
\end{theo}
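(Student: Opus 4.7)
The plan is to obtain Theorem \ref{thm:RDi_reg} as a direct specialization of the abstract Theorem \ref{thm:Gmain2_reg} by instantiating the dictionary
\[
\alphao=\rho,\qquad \alphat=\Amat,\qquad \A=\na,\qquad \As=-\div,\qquad \hio=\hit=\lt,\qquad D(\A)=\hogad,\qquad D(\As)=\dgan.
\]
Under this identification, the abstract mixed problem \eqref{mixedformulation2} becomes exactly \eqref{eq:RDiMixed}, since $\As y_x+i\omega\alphao x=f$ translates into $-\div p+i\omega\rho\,u=f$ and $\alphat\A x=y_x$ into $\Amat\na u=p$. The weighted norms of Section \ref{sec:G} are then the concrete norms $\normhioomao{\,\cdot\,}=\norm{\cdot}_{\lt,\norm{\omega}\rho}$, $\normhitat{\,\cdot\,}=\norm{\cdot}_{\lt,\Amat}$, and so on, so $\ttnorm{(x,y)-(\xt,\yt)}^2$ collapses to the left-hand side of the claimed inequalities.

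Next, I would verify that the regularity hypothesis of Theorem \ref{thm:Gmain2_reg} is met: the assumption $\ut\in\hogad$ together with $\pt=\Amat\na\ut\in\dgan$ is precisely the translation of $\xt\in D(\A)$ with $\yt=\alphat\A\xt\in D(\As)$. Once this is in place, a purely symbolic rewriting gives
\[
\Ri(\ut,\pt)=\normhioomaomo{f-i\omega\alphao\ut-\As\pt}^{2}=\norm{f-i\omega\rho\,\ut+\div\pt}_{\lt,(\norm{\omega}\rho)^{-1}}^{2}=\Rrdi(\ut,\pt),
\]
using that $\As\pt=-\div\pt$. Substituting into \eqref{eq:Gmain2_reg1} yields the first pair of inequalities of the theorem immediately.

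For the normalized counterpart one uses Remark \ref{rem:solutionoperatornormgen}, whose concrete form is stated just above Theorem \ref{thm:RDi_reg}: one has
\[
\norm{f}_{\lt,(\norm{\omega}\rho)^{-1}}\leq\ttnorm{(u,p)}\leq\sqrt{2}\,\norm{f}_{\lt,(\norm{\omega}\rho)^{-1}}.
\]
Dividing the first set of inequalities through by $\ttnorm{(u,p)}^{2}$ and then inserting these two-sided bounds on the denominator produces exactly the factor $\nicefrac{1}{2}$ on the left and $2$ on the right, as in \eqref{eq:Gmain2_reg2}. This completes the reduction.

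I do not anticipate a real obstacle here: the entire substantive content — the two-sided estimate and the `almost isometry' $\norm{L_{i}}\in[1,\sqrt{2}]$ — has already been established in the abstract setting. The only thing to be careful about is bookkeeping of the sign $\As=-\div$ (which changes $-\As\yt$ into $+\div\pt$ inside the residual) and the identification of the weighted $\lt$-norms with the abstract $\hio,\hit$-norms. All other ingredients are then a direct transcription.
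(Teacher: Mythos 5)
Your proposal is correct and is exactly how the paper handles this result: Theorem \ref{thm:RDi_reg} is presented there without a separate proof, as the direct transcription of the abstract Theorem \ref{thm:Gmain2_reg} under the dictionary $\alphao=\rho$, $\alphat=\Amat$, $\A=\na$, $\As=-\div$, $D(\A)=\hogad$, $D(\As)=\dgan$, with the normalized bounds coming from Remark \ref{rem:solutionoperatornormgen}. Your bookkeeping of the sign in $\As\pt=-\div\pt$ and of the weighted norms matches the paper's conventions.
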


\begin{theo} 
\label{thm:RDi}
Let $(u,p),(\ut,\pt)\in\hogad\times\dgan$ 
be the exact solution of \eqref{eq:RDiMixed} 
and any approximation, respectively. Then
$$\frac{\sqrt{2}}{\sqrt{2}+1} \Mrdi(\ut,\pt)
\leq \ttnorm{(u,p)-(\ut,\pt)}^2
\leq \frac{\sqrt{2}}{\sqrt{2}-1} \Mrdi(\ut,\pt)$$
and
$$\frac{\sqrt{2}}{2(\sqrt{2}+1)} \cdot \frac{\Mrdi(\ut,\pt)}{\norm{f}_{\lt,(\norm{\omega}\rho)^{-1}}^2}	
\leq \frac{\ttnorm{(u,p)-(\ut,\pt)}^2}{\ttnorm{(u,p)}^2}
\leq \frac{\sqrt{2}}{\sqrt{2}-1} \cdot \frac{\Mrdi(\ut,\pt)}{\norm{f}_{\lt,(\norm{\omega}\rho)^{-1}}^2}	$$
hold, where $\Mrdi(\ut,\pt)=\norm{f-i\omega\rho\,\ut+\div\pt}_{\lt,(\norm{\omega}\rho)^{-1}}^2
+\norm{\pt-\Amat\na\ut}_{\lt,\Amat^{-1}}^2$.
\end{theo}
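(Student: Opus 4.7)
The plan is to obtain Theorem \ref{thm:RDi} as a direct specialization of the abstract Theorem \ref{thm:Gmain2}. The first step is to verify that the reaction-diffusion setting fits into the general framework of Section \ref{sec:G} with the identifications $\hio=\hit=\lt$, $\A=\na$, $\As=-\div$, $\alphao=\rho$, $\alphat=\Amat$, $D(\A)=\hogad$, and $D(\As)=\dgan$. Here $\na$ with domain $\hogad$ is densely defined and closed in $\lt$, and its $\lt$-adjoint is $-\div$ with domain $\dgan$; this is justified for bounded Lipschitz domains by the references already cited (\cite{jochmanncompembmaxmixbc,bauerpaulyschomburgmcpweaklip}) and the integration-by-parts identity stated just after the coefficient table. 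The coefficients $\rho\in\li$ with $\rho\ge\rho_0>0$ and the symmetric uniformly positive definite $\Amat\in\li$ are self-adjoint positive topological isomorphisms on $\lt$, so the hypotheses on $\alphao,\alphat$ are met.

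Once the dictionary is in place, I would simply substitute into the abstract quantities. The weighted inner products collapse to the $\lt$-norms weighted by $\rho$, $\norm{\omega}\rho$, $\Amat$, and their inverses, so $\normhioomao{\cdot}$, $\normhitat{\cdot}$, $\normhitatmo{\cdot}$, $\normhioomaomo{\cdot}$ become $\norm{\cdot}_{\lt,\norm{\omega}\rho}$, $\norm{\cdot}_{\lt,\Amat}$, $\norm{\cdot}_{\lt,\Amat^{-1}}$, $\norm{\cdot}_{\lt,(\norm{\omega}\rho)^{-1}}$, respectively. The combined norm $\ttnorm{(x,y)-(\xt,\yt)}$ thus coincides with $\ttnorm{(u,p)-(\ut,\pt)}$ as defined in the reaction-diffusion subsection, and the residual $\Mi(\xt,\yt)$ becomes
\begin{equation*}
\Mrdi(\ut,\pt)=\norm{f-i\omega\rho\,\ut+\div\pt}_{\lt,(\norm{\omega}\rho)^{-1}}^2+\norm{\pt-\Amat\na\ut}_{\lt,\Amat^{-1}}^2,
\end{equation*}
noting that $\As\yt=-\div\pt$ produces the plus sign in front of $\div\pt$. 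Theorem \ref{thm:Gmain2} then yields the non-normalized two-sided estimate
\begin{equation*}
\frac{\sqrt{2}}{\sqrt{2}+1}\Mrdi(\ut,\pt)\leq\ttnorm{(u,p)-(\ut,\pt)}^2\leq\frac{\sqrt{2}}{\sqrt{2}-1}\Mrdi(\ut,\pt).
\end{equation*}

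For the normalized counterpart I would use the two-sided bound $\norm{f}_{\lt,(\norm{\omega}\rho)^{-1}}\leq\ttnorm{(u,p)}\leq\sqrt{2}\norm{f}_{\lt,(\norm{\omega}\rho)^{-1}}$, which is precisely the reaction-diffusion restatement of \eqref{eq:solutionoperatornormgen2} from Remark \ref{rem:solutionoperatornormgen}. Dividing the upper bound of $\ttnorm{(u,p)-(\ut,\pt)}^2$ by the lower bound $\norm{f}_{\lt,(\norm{\omega}\rho)^{-1}}^2$ of $\ttnorm{(u,p)}^2$ gives the stated upper inequality, and dividing the lower bound by the upper bound $2\norm{f}_{\lt,(\norm{\omega}\rho)^{-1}}^2$ gives the stated lower inequality with the extra factor $1/2$ — matching the constants $\sqrt{2}/(2(\sqrt{2}+1))$ and $\sqrt{2}/(\sqrt{2}-1)$ in the claim.

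There is essentially no obstacle; the whole proof is bookkeeping. The only point where I would pay attention is to track the sign coming from $\As=-\div$ so that $\As\yt$ produces $+\div\pt$ in $\Mrdi$, and to keep track of $\omega$ versus $\norm{\omega}$ in the weights (the inner products use $\omega$, while the norms use $\norm{\omega}$), but this is automatic from the conventions set up in Section \ref{subsec:Gcase2}.
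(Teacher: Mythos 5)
Your proposal is correct and follows exactly the route the paper intends: Theorem \ref{thm:RDi} is stated in Section \ref{subsec:RD} purely as the specialization of the abstract Theorem \ref{thm:Gmain2} (together with Remark \ref{rem:solutionoperatornormgen} for the normalized bounds) under the dictionary $\A=\na$, $\As=-\div$, $\alphao=\rho$, $\alphat=\Amat$, $D(\A)=\hogad$, $D(\As)=\dgan$, with no separate proof given. Your bookkeeping of the sign from $\As=-\div$, the $\omega$ versus $\norm{\omega}$ weights, and the division by the two-sided bound on $\ttnorm{(u,p)}$ reproduces the stated constants exactly.
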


\begin{theo}
We have the following a posteriori error estimates for the primal and dual problems.
\begin{itemize}
\item[\bf(i)]
For any $\ut\in\hogad$ it holds
$\ds\norm{u-\ut}_{\ho,\norm{\omega}\rho,\Amat}^2
\leq2\Mrdi(\ut,\psi)$ for any $\psi \in \dgan$.
\item[\bf(ii)]
For any $\pt\in\dgan$ it holds
$\ds\norm{p-\pt}_{\d,(\norm{\omega}\rho)^{-1},\Amat^{-1}}^2
\leq2\Mrdi(\varphi,\pt)$ for any $\varphi \in \hogad$.
\end{itemize}
\end{theo}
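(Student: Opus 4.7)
The plan is to specialize the abstract Theorem \ref{thm:primaldualseparate} using the dictionary spelled out in the table at the start of Section \ref{subsec:RD}. First I would verify that the abstract hypotheses hold in the PDE setting: namely, $\na \colon \hogad \subset \lt \to \lt$ is densely defined and closed with Hilbert-space adjoint $-\div \colon \dgan \subset \lt \to \lt$ (the identification $D(\As)=\dgan$ being where Lipschitz regularity of $\om$ with mixed boundary parts enters, via the cited results \cite{jochmanncompembmaxmixbc,bauerpaulyschomburgmcpweaklip}), and that the coefficients $\rho$ and $\Amat$ are uniformly positive, bounded, self-adjoint topological isomorphisms on $\lt$, which is immediate from the standing assumptions on $\rho\ge\rho_0>0$ in $\li$ and $\Amat\in\li$ symmetric and uniformly positive definite.

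Next I would translate the abstract objects. Under the substitutions $\alphao\mapsto\rho$, $\alphat\mapsto\Amat$, $\A\mapsto\na$, $\As\mapsto-\div$, the weighted graph norms $\normdaidomaoat{\,\cdot\,}$ and $\normdasomaomoatmo{\,\cdot\,}$ specialize to exactly $\norm{\,\cdot\,}_{\ho,\norm{\omega}\rho,\Amat}$ and $\norm{\,\cdot\,}_{\d,(\norm{\omega}\rho)^{-1},\Amat^{-1}}$. Moreover $\Mi(\xt,\yt)$ specializes to $\Mrdi(\ut,\pt)$, since $\As\yt=-\div\pt$ turns $f-i\omega\alphao\xt-\As\yt$ into $f-i\omega\rho\,\ut+\div\pt$, and $\yt-\alphat\A\xt$ into $\pt-\Amat\na\ut$. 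With the dictionary in place, assertion (i) is Theorem \ref{thm:primaldualseparate}(i) applied with $x=u$, $\xt=\ut$ and arbitrary $\psi\in\dgan$, while (ii) is Theorem \ref{thm:primaldualseparate}(ii) applied with $y=p$, $\yt=\pt$ and arbitrary $\varphi\in\hogad$. The factor $2$ on the right comes out of the choices $\delta=\sqrt{2}$ and $\delta=1/\sqrt{2}$ in \eqref{eq:Gmain2_4}, which were already made in the proof of Theorem \ref{thm:primaldualseparate}.

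The only non-routine step is the preparatory identification of $D(-\div)$ with $\dgan$ for bounded Lipschitz domains carrying mixed boundary conditions; this is black-boxed by citing the literature, and everything downstream is a direct transcription of the abstract result. Consequently I would expect the write-up to be only a few lines long, consisting of the verification of the dictionary and a single invocation of Theorem \ref{thm:primaldualseparate}.
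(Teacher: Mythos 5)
Your proposal is correct and is exactly how the paper obtains this result: the theorem is stated as the specialization of the abstract Theorem \ref{thm:primaldualseparate} under the dictionary $\alphao\mapsto\rho$, $\alphat\mapsto\Amat$, $\A\mapsto\na$, $\As\mapsto-\div$, $D(\A)=\hogad$, $D(\As)=\dgan$, with the identification $D(\As)=\dgan$ taken from the cited literature. The paper gives no separate proof beyond this invocation, so your write-up matches its route precisely.
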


Error indication properties of Section \ref{subsec:ind} hold as well:

\begin{rem}
Let $\elems$ denote a discretization of the domain $\om$ 
into a mesh of non-overlapping elements $\elem$ such as described in Section \ref{subsec:ind}. 
We define the following error indicator using the functional of Theorem \ref{thm:RDi}:
$$\eta_{i,\elem}(\ut,\pt) :=
\sqrt{\norm{f-i\omega\rho\,\ut+\div\pt}_{\lt(\elem),(\norm{\omega}\rho)^{-1}}^2
+\norm{\pt-\Amat\na\ut}_{\lt(\elem),\Amat^{-1}}^2},
\qquad
\eta_i:=\sqrt{\sum_{\elem\in\elems} \eta_{i,\elem}^2}$$
The error indicator $\eta_i$ will indicate the exact error distribution
$$e_{i,\elem}(\xt,\yt)
:= \sqrt{ \norm{u-\ut}_{\ho(T),\norm{\omega}\rho,\Amat}^2
+\norm{p-\pt}_{\d(T),(\norm{\omega}\rho)^{-1},\Amat^{-1}}^2 },
\qquad
e_i:=\sqrt{\sum_{\elem\in\elems} e_{i,\elem}^2}.$$
As shown in Section \ref{subsec:ind}, the global reliability constant, 
global efficiency constant, and the local efficiency constants are
$$\ol{c} = \sqrt{\frac{\sqrt{2}}{\sqrt{2}-1}} < 1.85 , \qquad
\ul{c} = \sqrt{\frac{\sqrt{2}}{\sqrt{2}+1}} > 0.76 , \qquad
c_\elem = \frac{1}{\sqrt{2}} > 0.7 \quad \forall\,\elem\in\elems,$$
respectively.
\end{rem}


\subsection{Electro-Magnetic Problems (3D)} \label{subsec:EC}

Let $d=3$. We need the Sobolev spaces\footnote{The space $\r$ 
is often denoted by $\hilbert(\rot)$ or $\hilbert(\textrm{curl})$ in the literature.}
$$\r:=\set{\Phi\in\lt}{\rot\Phi\in\lt},\quad
\rgad:=\ol{\cigad}^{\r},\quad
\rgan:=\ol{\cigan}^{\r} .$$
The following table shows the relation to the notation of Section \ref{sec:G}:
\begin{center}\begin{tabular}{c|c||c|c||c|c||c|c}
$\alphao$ & $\alphat$ & $\A$ & $\As$ & $\hio$ & $\hit$ & $D(\A)$ & $D(\As)$\\
\hline
$\eps,\sigma$ & $\mu^{-1}$ & $\rot$ & $\rot$ & $\lt$ & $\lt$ & $\rgad$ & $\rgan$
\end{tabular}\end{center}
We note that indeed $D(\As)=\rgan$ holds for Lipschitz domains, see e.g. 
\cite{jochmanncompembmaxmixbc,bauerpaulyschomburgmcpweaklip}. 
The relation \eqref{partint} reads now
$$\forall\,\Phi\in\rgad\quad\forall\,\Psi\in\rgan\qquad
\scplt{\rot\Phi}{\Psi}=\scplt{\Phi}{\rot\Psi}.$$

\subsubsection*{Case I: a Maxwell Type Problem}

The problem reads: Find the electric field $E\in\r$ such that
\begin{align}
\rot\mu^{-1}\rot E+\eps E&=J&
\textrm{in }&\om,\nonumber\\
n\times E&=0&
\textrm{on }&\gad,\label{eq:EC}\\
n\times\mu^{-1}\rot E&=0&
\textrm{on }&\gan.\nonumber
\end{align}
We assume that the magnetic permeability $\mu$ and the electric permittivity $\eps$ 
are symmetric, real valued, and uniformly positive definite matrices from $\li$.
The electric current $J$ belongs to $\lt$. 
The dual variable for this problem is the magnetic field $H=\mu^{-1}\rot E\in\r$.
The mixed formulation of \eqref{eq:EC} reads as follows: Find $(E,H)\in\rgad\times\rgan$ such that
\begin{equation} \label{eq:ECMixed}
\rot H+\eps E=J,\quad\mu^{-1}\rot E=H \qquad \textrm{in } \om.	
\end{equation}
Considering the norms we have
\begin{align*}
\norm{E}_{\r,\eps,\mu^{-1}}^2
&=\norm{E}_{\lt,\eps}^2
+\norm{\rot E}_{\lt,\mu^{-1}}^2,\\
\norm{H}_{\r,\eps^{-1},\mu}^2
&=\norm{H}_{\lt,\mu}^2
+\norm{\rot H}_{\lt,\eps^{-1}}^2,\\
\tnorm{(E,H)}^2
&=\norm{E}_{\r,\eps,\mu^{-1}}^2
+\norm{H}_{\r,\eps^{-1},\mu}^2.
\end{align*}
Now Theorem \ref{thm:Gmain} reads:
\begin{theo} 
\label{thm:EC}
Let $(E,H),(\Et,\Ht)\in\rgad\times\rgan$ 
be the exact solution of \eqref{eq:ECMixed} 
and any approximation, respectively. Then
$$\tnorm{(E,H)-(\Et,\Ht)}^2
=\Mec(\Et,\Ht),\quad
\frac{\tnorm{(E,H)-(\Et,\Ht)}^2}{\tnorm{(E,H)}^2}
=\frac{\Mec(\Et,\Ht)}{\norm{J}_{\lt,\eps^{-1}}^2}$$
hold, where $\Mec(\Et,\Ht)=\norm{J-\eps\Et-\rot\Ht}_{\lt,\eps^{-1}}^2
+\norm{\Ht-\mu^{-1}\rot\Et}_{\lt,\mu}^2$.
\end{theo}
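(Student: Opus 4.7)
The proof is a direct specialization of Theorem \ref{thm:Gmain} to the Maxwell setting, so my plan is to verify that the hypotheses of the abstract framework are satisfied under the obvious identifications and then read off the conclusion. Concretely, I would take $\hio=\hit=\lt(\om)$, $\alphao:=\eps$, $\alphat:=\mu^{-1}$, $\A:=\rot$ with $D(\A):=\rgad$, and $\As:=\rot$ with $D(\As):=\rgan$, as already summarized in the table preceding the theorem.

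First I would check the coefficient hypotheses. Because $\eps$ and $\mu$ are symmetric, real valued, and uniformly positive definite elements of $\li$, multiplication by $\eps$ and by $\mu^{-1}$ defines self-adjoint, positive, topological isomorphisms on $\lt(\om)$. This is exactly what the abstract theory requires of $\alphao$ and $\alphat$, and in particular the weighted inner products and norms $\scplt{\,\cdot\,}{\,\cdot\,}_{\eps}$, $\scplt{\,\cdot\,}{\,\cdot\,}_{\mu^{-1}}$ and their inverse-weighted counterparts are well defined and equivalent to the usual $\lt$-structure.

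The only nontrivial step is the verification that $\rot:\rgad\subset\lt\to\lt$ is densely defined and closed with adjoint $\rot:\rgan\subset\lt\to\lt$. This identification of the adjoint under mixed tangential boundary conditions is the main technical point, but on a bounded Lipschitz domain it is known from the cited works \cite{jochmanncompembmaxmixbc,bauerpaulyschomburgmcpweaklip}, so I would simply invoke it. The resulting partial integration identity $\scplt{\rot\Phi}{\Psi}=\scplt{\Phi}{\rot\Psi}$ for $\Phi\in\rgad$, $\Psi\in\rgan$ is precisely the instance of \eqref{partint} needed here, and the mixed system \eqref{eq:ECMixed} becomes an instance of \eqref{mixedformulation}.

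Finally, I would apply Theorem \ref{thm:Gmain} and translate notation. Under the identifications above the abstract majorant $\M(\xt,\yt)$ specializes to $\norm{J-\eps\Et-\rot\Ht}_{\lt,\eps^{-1}}^2+\norm{\Ht-\mu^{-1}\rot\Et}_{\lt,\mu}^2$, which is precisely $\Mec(\Et,\Ht)$ after using $(\mu^{-1})^{-1}=\mu$; the combined norm $\tnorm{(E,H)-(\Et,\Ht)}^2$ unfolds to the sum of the four weighted terms $\norm{E-\Et}_{\lt,\eps}^2+\norm{\rot(E-\Et)}_{\lt,\mu^{-1}}^2+\norm{H-\Ht}_{\lt,\mu}^2+\norm{\rot(H-\Ht)}_{\lt,\eps^{-1}}^2$; and the normalizing quantity $\normhioaomo{f}^2$ from Remark \ref{rem:isometrygen} becomes $\norm{J}_{\lt,\eps^{-1}}^2$. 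Both asserted equalities then follow without further computation.
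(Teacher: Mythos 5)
Your proposal is correct and follows exactly the route the paper intends: Theorem \ref{thm:EC} is stated as a direct instance of Theorem \ref{thm:Gmain} under the identifications $\alphao=\eps$, $\alphat=\mu^{-1}$, $\A=\rot$ on $\rgad$, $\As=\rot$ on $\rgan$, with the adjoint identification $D(\As)=\rgan$ justified by the same cited references. The translation of the majorant, the combined norm, and the normalizing quantity $\norm{J}_{\lt,\eps^{-1}}^2$ all check out, so nothing further is needed.
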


Earlier results for eddy current and static Maxwell problems can be found in 
\cite{anjammalimuzalevskiyneittaanmakirepinmaxtype,paulyrepinmaxst}.

\subsubsection*{Case II: Eddy-Current}

The problem reads: Find the electric field $E\in\r$ such that
\begin{align}
\rot\mu^{-1}\rot E+i\omega\sigma E&=J&
\textrm{in }&\om,\nonumber\\
n\times E&=0&
\textrm{on }&\gad,\label{eq:ECi}\\
n\times\mu^{-1}\rot E&=0&
\textrm{on }&\gan,\nonumber
\end{align}
where $\mu$ and $J$ are as before, the conductivity $\sigma$ 
is a symmetric, real valued, and uniformly positive definite matrix from $\li$,
and $\omega \in \realsnotz$.
The dual variable for this problem is the magnetic field $H=\mu^{-1}\rot E\in\r$.
The mixed formulation of \eqref{eq:ECi} reads: Find $(E,H)\in\rgad\times\rgan$ such that
\begin{equation} \label{eq:ECiMixed}
\rot H+i\omega\sigma E=J,\quad\mu^{-1}\rot E=H \qquad \textrm{in } \om.
\end{equation}
Considering the norms we have
\begin{align*}
\norm{E}_{\r,\norm{\omega}\sigma,\mu^{-1}}^2
&=\norm{E}_{\lt,\norm{\omega}\sigma}^2
+\norm{\rot E}_{\lt,\mu^{-1}}^2,\\
\norm{H}_{\r,(\norm{\omega}\sigma)^{-1},\mu}^2
&=\norm{H}_{\lt,\mu}^2
+\norm{\rot H}_{\lt,(\norm{\omega}\sigma)^{-1}}^2,\\
\ttnorm{(E,H)}^2
&=\norm{E}_{\r,\norm{\omega}\sigma,\mu^{-1}}^2
+\norm{H}_{\r,(\norm{\omega}\sigma)^{-1},\mu}^2.
\end{align*}
Now Theorem \ref{thm:Gmain2} reads:
\begin{theo} 
\label{thm:ECi}
Let $(E,H),(\Et,\Ht)\in\rgad\times\rgan$ 
be the exact solution of \eqref{eq:ECiMixed} 
and any approximation, respectively. Then
$$\frac{\sqrt{2}}{\sqrt{2}+1} \Meci(\Et,\Ht)
\leq
\ttnorm{(E,H)-(\Et,\Ht)}^2
\leq
\frac{\sqrt{2}}{\sqrt{2}-1} \Meci(\Et,\Ht)$$
and
$$\frac{\sqrt{2}}{2(\sqrt{2}+1)} \cdot \frac{\Meci(\Et,\Ht)}{\norm{J}_{\lt,(\norm{\omega}\sigma)^{-1}}^2}
\leq
\frac{\ttnorm{(E,H)-(\Et,\Ht)}^2}{\ttnorm{(E,H)}^2}
\leq
\frac{\sqrt{2}}{\sqrt{2}-1} \cdot \frac{\Meci(\Et,\Ht)}{\norm{J}_{\lt,(\norm{\omega}\sigma)^{-1}}^2}$$
hold, where $\Meci(\Et,\Ht)=\norm{J-i\omega\sigma\Et-\rot\Ht}_{\lt,(\norm{\omega}\sigma)^{-1}}^2
+\norm{\Ht-\mu^{-1}\rot\Et}_{\lt,\mu}^2$.
\end{theo}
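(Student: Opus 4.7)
The plan is to recognize Theorem \ref{thm:ECi} as a direct specialization of the abstract Theorem \ref{thm:Gmain2} under the dictionary
\[
\hio=\hit=\lt(\om),\quad \A=\rot\colon\rgad\to\lt,\quad \As=\rot\colon\rgan\to\lt,\quad \alphao=\sigma,\quad \alphat=\mu^{-1},\quad f=J,
\]
with $x=E$ and $y=H$. No fresh computation is required beyond verifying that this dictionary actually fits the hypotheses of Section \ref{sec:G}.

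First I would check the operator-theoretic assumptions. The operator $\rot$ with domain $\rgad$ is linear, densely defined and closed on $\lt(\om)$, and its Hilbert space adjoint is precisely $\rot$ with domain $\rgan$ (this is the identification $D(\As)=\rgan$ for bounded Lipschitz domains cited earlier via \cite{jochmanncompembmaxmixbc,bauerpaulyschomburgmcpweaklip}), so \eqref{partint} reduces to the integration-by-parts formula displayed in Section \ref{subsec:EC}. The hypotheses on $\alphao,\alphat$ being self adjoint positive topological isomorphisms translate to the assumed symmetry, real-valuedness and uniform positive definiteness of $\sigma,\mu^{-1}\in\li$; in particular the weighted scalar products $\scplt{\sigma\,\cdot\,}{\,\cdot\,}$ and $\scplt{\mu^{-1}\,\cdot\,}{\,\cdot\,}$ define equivalent inner products on $\lt$, so all the weighted norms appearing in the statement are well defined.

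Next, I would observe that \eqref{eq:ECiMixed} is literally \eqref{mixedformulation2} under the above substitution, with $f=J\in\lt=\hio$ and $\omega\in\realsnotz$. Therefore Lemma \ref{laxmilgramA2} (and its dual counterpart Lemma \ref{laxmilgramAs2}) yield existence and uniqueness of the solution pair $(E,H)\in\rgad\times\rgan$, and the combined norm $\ttnorm{(E,H)}$ in the application coincides with the abstract $\ttnorm{(x,y)}$. Applying Theorem \ref{thm:Gmain2} to the approximation pair $(\Et,\Ht)\in\rgad\times\rgan$ gives, with $\Mi(\xt,\yt)$ read off as
\[
\Meci(\Et,\Ht)=\normlt{J-i\omega\sigma\Et-\rot\Ht}_{\lt,(\norm{\omega}\sigma)^{-1}}^{2}+\normlt{\Ht-\mu^{-1}\rot\Et}_{\lt,\mu}^{2},
\]
the non-normalized two-sided estimate of the theorem directly from \eqref{eq:Gmain21}.

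Finally, for the normalized counterpart I would invoke the `almost isometry' bound from Remark \ref{rem:solutionoperatornormgen}, which in this setting reads $\normlt{J}_{\lt,(\norm{\omega}\sigma)^{-1}}\le\ttnorm{(E,H)}\le\sqrt{2}\,\normlt{J}_{\lt,(\norm{\omega}\sigma)^{-1}}$; dividing the abstract estimate \eqref{eq:Gmain22} through and inserting these bounds produces exactly the claimed normalized inequalities. The only mildly delicate step is the identification $D(\As)=\rgan$, but this has already been established in the cited literature, so the proof reduces to a short bookkeeping argument. I would write it up essentially as: \emph{apply Theorem \ref{thm:Gmain2} with the identifications above; the claims follow.}
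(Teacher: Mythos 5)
Your proposal is correct and matches the paper's own treatment: the paper states Theorem \ref{thm:ECi} as a direct instance of Theorem \ref{thm:Gmain2} under exactly the dictionary $\A=\rot$ on $\rgad$, $\As=\rot$ on $\rgan$, $\alphao=\sigma$, $\alphat=\mu^{-1}$, $f=J$, relying on the cited identification $D(\As)=\rgan$ for Lipschitz domains. Your verification of the hypotheses and the use of \eqref{eq:Gmain21}--\eqref{eq:Gmain22} (with the almost-isometry bound for the normalized version) is precisely the intended bookkeeping argument.
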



\subsection{Electro-Magnetic Problems (2D)} \label{subsec:EC2D}

Let $d=2$. In the following we simply indicate the changes compared to the previous section.
First, we have to understand the double $\rot$ as $\na^\bot\rot$, where
$$\rot E:=\div {\rm Q} E=\p_1E_2-\p_2E_1,\quad
\na^\bot H:={\rm Q}\na H=\begin{bmatrix}\p_2H\\-\p_1H\end{bmatrix},\quad
{\rm Q}:=\begin{bmatrix}0&1\\-1&0\end{bmatrix},$$
and $E\in\r$ is a vector field and $H\in\ho$ a scalar function. 
In the literature, the operator $\na^\bot$ is often called 
co-gradient or vector rotation $\vec{\rot}$ as well. 
Also $\mu$ is scalar.
The following table shows the relation to the notation of Section \ref{sec:G}:
\begin{center}\begin{tabular}{c|c||c|c||c|c||c|c}
$\alphao$ & $\alphat$ & $\A$ & $\As$ & $\hio$ & $\hit$ & $D(\A)$ & $D(\As)$\\
\hline
$\eps,\sigma$ & $\mu^{-1}$ & $\rot$ & $\na^{\bot}$ & $\lt$ & $\lt$ & $\rgad$ & $\hogan$
\end{tabular}\end{center}
The relation \eqref{partint} reads now
$$\forall\,\Phi\in\rgad\quad\forall\,\psi\in\hogan\qquad
\scplt{\rot\Phi}{\psi}=\scplt{\Phi}{\na^{\bot}\psi}.$$

\subsubsection*{Case I: a Maxwell Type Problem}

Now \eqref{eq:EC} reads: Find the electric field $E\in\r$ such that
\begin{align*}
\na^{\bot}\mu^{-1}\rot E+\eps E&=J&
\textrm{in }&\om,\\
n\times E&=0&
\textrm{on }&\gad,\\
\mu^{-1}\rot E&=0&
\textrm{on }&\gan.
\end{align*}
The mixed formulation of the problem is: Find $(E,H)\in\rgad\times\hogan$ such that
\begin{equation} \label{eq:EC2DMixed}
\na^{\bot}H+\eps E=J,\quad\mu^{-1}\rot E=H \qquad \textrm{in } \om.
\end{equation}
The norm for $H$ is
$$\norm{H}_{\ho,\eps^{-1},\mu}^2
=\norm{H}_{\lt,\mu}^2
+\norm{\na^{\bot}H}_{\lt,\eps^{-1}}^2.$$
Now Theorem \ref{thm:EC} (and thus Theorem \ref{thm:Gmain}) reads:
\begin{theo} 
\label{thm:EC2D}
Let $(E,H),(\Et,\Ht)\in\rgad\times\hogan$ 
be the exact solution of \eqref{eq:EC2DMixed} 
and any approximation, respectively. Then
$$\tnorm{(E,H)-(\Et,\Ht)}^2
=\Mec(\Et,\Ht),\quad
\frac{\tnorm{(E,H)-(\Et,\Ht)}^2}{\tnorm{(E,H)}^2}
=\frac{\Mec(\Et,\Ht)}{\norm{J}_{\lt,\eps^{-1}}^2}$$
hold, where $\Mec(\Et,\Ht)=\norm{J-\eps\Et-\na^{\bot}\Ht}_{\lt,\eps^{-1}}^2
+\norm{\Ht-\mu^{-1}\rot\Et}_{\lt,\mu}^2$.
\end{theo}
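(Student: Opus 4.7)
The plan is to deduce Theorem \ref{thm:EC2D} as a direct specialization of the abstract error equality in Theorem \ref{thm:Gmain}. All that is needed is to identify the abstract data $(\A,\As,\hio,\hit,\alphao,\alphat)$ with the 2D Maxwell data and to verify the hypotheses required by Theorem \ref{thm:Gmain} in this setting.

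First, I would take $\hio=\hit=\lt(\om)$ (vector-valued and scalar-valued, respectively), $\alphao=\eps$, $\alphat=\mu^{-1}$, $\A=\rot$ with $D(\A)=\rgad$, and $\As=\na^\bot$ with $D(\As)=\hogan$. Since $\eps$ and $\mu^{-1}$ are multiplication by uniformly positive, essentially bounded, real valued matrix- resp. scalar-valued functions, they are self adjoint and positive topological isomorphisms on the corresponding $\lt$-spaces. The operator $\rot$ on $\rgad$ is densely defined (as $\cigad$ is dense in $\rgad$ by the very definition of $\rgad=\ol{\cigad}^{\r}$) and closed (its graph carries precisely the $\r$-graph norm).

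Second, I would verify the duality pairing \eqref{partint}: $\scplt{\rot\Phi}{\psi}=\scplt{\Phi}{\na^{\bot}\psi}$ for all $\Phi\in\rgad$ and $\psi\in\hogan$. For smooth $\Phi\in\cigad$ and $\psi\in\cigan$ this follows by integration by parts, the boundary integral vanishing because $\gad$ and $\gan$ together cover $\ga$ while the supports of $\Phi$ and $\psi$ avoid $\gad$ and $\gan$, respectively. The general identity extends by density in $\r$ and $\ho$. For Lipschitz domains this also yields the reverse inclusion that identifies $D(\As)$ with $\hogan$ (rather than a larger space), exactly as in the analogous 3D situation cited in Section \ref{subsec:EC}.

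Once these abstract prerequisites are in place, Theorem \ref{thm:Gmain} applies verbatim: the general functional $\M(\xt,\yt)$ of \eqref{eq:Gmain} becomes
$$\M(\Et,\Ht)=\norm{J-\eps\Et-\na^{\bot}\Ht}_{\lt,\eps^{-1}}^2+\norm{\Ht-\mu^{-1}\rot\Et}_{\lt,\mu}^2=\Mec(\Et,\Ht),$$
the combined norm $\tnorm{(\varphi,\psi)}$ unfolds into the norms defined above for $\rgad\times\hogan$, and the normalized counterpart follows from the isometry identity $\tnorm{(E,H)}^2=\norm{J}_{\lt,\eps^{-1}}^2$ provided by Remark \ref{rem:isometrygen}. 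The only real obstacle is the characterization $D(\na^{\bot}_{\gan})=\hogan$ for Lipschitz domains, but this is precisely the kind of statement already used in Sections \ref{subsec:RD} and \ref{subsec:EC} and documented in the references \cite{jochmanncompembmaxmixbc,bauerpaulyschomburgmcpweaklip}, so no new analytic work is required.
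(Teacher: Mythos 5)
Your proposal is correct and follows exactly the route the paper takes: the paper proves Theorem \ref{thm:EC2D} simply by specializing Theorem \ref{thm:Gmain} via the dictionary $\alphao=\eps$, $\alphat=\mu^{-1}$, $\A=\rot$ on $\rgad$, $\As=\na^{\bot}$ on $\hogan$, with the adjointness characterization $D(\As)=\hogan$ delegated to the cited references, just as you do. Your write-up merely makes explicit the verification of the abstract hypotheses that the paper leaves implicit in its table and in the remark preceding the theorem.
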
 

\subsubsection*{Case II: Eddy-Current}

Now \eqref{eq:ECi} reads: Find the electric field $E\in\r$ such that
\begin{align*}
\na^{\bot}\mu^{-1}\rot E+i\omega\sigma E&=J&
\textrm{in }&\om,\\
n\times E&=0&
\textrm{on }&\gad,\\
\mu^{-1}\rot E&=0&
\textrm{on }&\gan.
\end{align*}
The mixed formulation of the problem is: Find $(E,H)\in\rgad\times\hogan$ such that
\begin{equation} \label{eq:ECi2DMixed}
\na^{\bot}H+i\omega\sigma E=J,\quad\mu^{-1}\rot E=H \qquad \textrm{in } \om.
\end{equation}
The norm for $H$ is
$$\norm{H}_{\ho,(\norm{\omega}\sigma)^{-1},\mu}^2
=\norm{H}_{\lt,\mu}^2
+\norm{\na^{\bot}H}_{\lt,(\norm{\omega}\sigma)^{-1}}^2.$$
Now Theorem \ref{thm:ECi} (and thus Theorem \ref{thm:Gmain2}) reads:
\begin{theo} 
\label{thm:ECi2D}
Let $(E,H),(\Et,\Ht)\in\rgad\times\hogan$ 
be the exact solution of \eqref{eq:ECi2DMixed}
and any approximation, respectively. Then
$$\frac{\sqrt{2}}{\sqrt{2}+1} \Meci(\Et,\Ht)
\leq
\ttnorm{(E,H)-(\Et,\Ht)}^2
\leq
\frac{\sqrt{2}}{\sqrt{2}-1} \Meci(\Et,\Ht)$$
and
$$\frac{\sqrt{2}}{2(\sqrt{2}+1)} \cdot \frac{\Meci(\Et,\Ht)}{\norm{J}_{\lt,(\norm{\omega}\sigma)^{-1}}^2}
\leq
\frac{\ttnorm{(E,H)-(\Et,\Ht)}^2}{\ttnorm{(E,H)}^2}
\leq
\frac{\sqrt{2}}{\sqrt{2}-1} \cdot \frac{\Meci(\Et,\Ht)}{\norm{J}_{\lt,(\norm{\omega}\sigma)^{-1}}^2}$$
hold, where $\Meci(\Et,\Ht)=\norm{J-i\omega\sigma\Et-\na^{\bot}\Ht}_{\lt,(\norm{\omega}\sigma)^{-1}}^2
+\norm{\Ht-\mu^{-1}\rot\Et}_{\lt,\mu}^2$.
\end{theo}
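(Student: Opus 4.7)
The plan is to derive Theorem \ref{thm:ECi2D} directly from the abstract result Theorem \ref{thm:Gmain2} by verifying that the 2D eddy-current setting fits into the general Hilbert space framework of Section \ref{sec:G}. To this end, I would first make explicit the dictionary already indicated in the table of Section \ref{subsec:EC2D}: choose $\hio=\hit=\lt(\om)$, set $\A:=\rot$ with domain $D(\A)=\rgad\subset\lt(\om)$, set $\As:=\na^{\bot}$ with domain $D(\As)=\hogan\subset\lt(\om)$, and take $\alphao:=\sigma$, $\alphat:=\mu^{-1}$. The frequency parameter $\omega\in\realsnotz$ carries over unchanged.

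The key step is to check the hypotheses of Theorem \ref{thm:Gmain2}. The operators $\sigma$ and $\mu^{-1}$ are multiplications by symmetric, real valued, uniformly positive definite matrix resp.\ scalar functions in $\li(\om)$, so they induce linear, self adjoint, positive topological isomorphisms on $\lt(\om)$. The operator $\rot:\rgad\subset\lt(\om)\to\lt(\om)$ is densely defined and closed, and for Lipschitz domains its adjoint is precisely $\na^{\bot}$ on $\hogan$, as noted in Section \ref{subsec:EC2D} via the references to \cite{jochmanncompembmaxmixbc,bauerpaulyschomburgmcpweaklip}; the integration by parts identity \eqref{partint} specializes to $\scplt{\rot\Phi}{\psi}=\scplt{\Phi}{\na^{\bot}\psi}$ for all $\Phi\in\rgad$, $\psi\in\hogan$. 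With this identification, the abstract mixed system \eqref{mixedformulation2} becomes exactly \eqref{eq:ECi2DMixed}.

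Having set up the correspondence, I would then simply translate the norms. The weighted norms of Section \ref{subsec:Gcase2} become the concrete norms displayed right before the theorem: $\normdaidomaoat{E}^2=\norm{E}_{\r,\norm{\omega}\sigma,\mu^{-1}}^2$ and $\normdasomaomoatmo{H}^2=\norm{H}_{\ho,(\norm{\omega}\sigma)^{-1},\mu}^2$, so that $\ttnorm{(E,H)}^2$ coincides in the two formulations. Likewise, the abstract residual functional $\Mi$ from \eqref{eq:Gmain2_} becomes $\Meci(\Et,\Ht)=\norm{J-i\omega\sigma\Et-\na^{\bot}\Ht}_{\lt,(\norm{\omega}\sigma)^{-1}}^2+\norm{\Ht-\mu^{-1}\rot\Et}_{\lt,\mu}^2$, and the right hand side norm $\normhioomaomo{f}$ specializes to $\norm{J}_{\lt,(\norm{\omega}\sigma)^{-1}}$. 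Applying the two inequalities \eqref{eq:Gmain21} and \eqref{eq:Gmain22} of Theorem \ref{thm:Gmain2} termwise yields both claimed two-sided estimates.

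There is essentially no obstacle in the argument; the only nontrivial input is the identification $D(\rot^{*})=\hogan$ for Lipschitz domains with mixed boundary parts $\gad,\gan$, which is a known result cited in the paper. Since every ingredient of Theorem \ref{thm:Gmain2} is verified under this translation, the proof reduces to a one-line invocation of that theorem, exactly parallel to the way Theorem \ref{thm:ECi} is obtained in the 3D case.
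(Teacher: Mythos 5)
Your proposal is correct and matches the paper's approach exactly: the paper obtains Theorem \ref{thm:ECi2D} by the same dictionary ($\A=\rot$, $\As=\na^{\bot}$, $\alphao=\sigma$, $\alphat=\mu^{-1}$, $D(\A)=\rgad$, $D(\As)=\hogan$) and a direct invocation of Theorem \ref{thm:Gmain2}, with no further argument given. The only substantive hypothesis, the identification of the adjoint domain for Lipschitz domains with mixed boundary parts, is handled by you just as the paper does, via the cited references.
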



\subsection{Linear Elasticity} \label{subsec:LE}

We will need $\nas$, which is the symmetric part of the gradient\footnote{Here, 
as usual in elasticity the gradient $\na u$
is to be understood as the Jacobian of the vector field $u$.}
$$\nas u:=\sym\na u=\frac{1}{2}\big(\na u+(\na u)^\top\big),$$
where ${}^\top$ denotes the transpose. 
$\nas u$, often denoted by $\eps(u)$, is also called the infinitesimal strain tensor.
For a tensor $\sigma$ the notation $\sigma\in\d$ and the application of $\Div$ to $\sigma$ 
is to be understood row-wise as the usual divergence $\div$. Moreover, we define
$$\Divs\sigma:=\Div\sym\sigma.$$
The following table shows the relation to the notation of Section \ref{sec:G}.
\begin{center}\begin{tabular}{c|c||c|c||c|c||c|c}
$\alphao$ & $\alphat$ & $\A$ & $\As$ & $\hio$ & $\hit$ & $D(\A)$ & $D(\As)$\\
\hline
$\rho$ & $\L$ & $\nas$ & $-\Divs$ & $\lt$ & $\lt$ & $\hogad$ & $\sym^{-1}\dgan$
\end{tabular}\end{center}
The notation $\sigma\in\sym^{-1}\dgan$ means $\sym\sigma\in\dgan$.
More precisely, $\psi\in D(\As)$ if and only if
$$\forall\,\varphi\in D(\A)=\hogad\qquad
\scplt{\nas\varphi}{\psi}=\scplt{\varphi}{\As\psi}.$$
Since $\scplt{\nas\varphi}{\psi}=\scplt{\na\varphi}{\sym\psi}$
we see that this holds if and only if $\sym\psi\in\dgan$ and $\As\psi=-\Div\sym\psi$.
Equation \eqref{partint} turns into
$$\forall\,\varphi\in\hogad\quad\forall\,\psi\in\sym^{-1}\dgan
\qquad\scplt{\nas\varphi}{\psi}=-\scplt{\varphi}{\Divs\psi}.$$

\subsubsection*{Case I}

Find the displacement vector field $u\in\ho$ such that
\begin{align}
-\Div\L\nas u+\rho\,u&=f&
\textrm{in }&\om,\nonumber\\
u&=0&
\textrm{on }&\gad,\label{eq:LE}\\
\L\nas u \cdot n&=0&
\textrm{on }&\gan.\nonumber
\end{align}
The fourth order stiffness tensor of elastic moduli $\L\in\li$,
mapping symmetric matrices to symmetric matrices point-wise,
and the second order tensor (quadratic matrix) of reaction $\rho$ 
are assumed to be symmetric, real valued, and uniformly positive definite.
The vector field $f$ (body force) belongs to $\lt$ and
the dual variable for this problem is the Cauchy stress tensor $\sigma=\L\nas u\in\d$.
Note that $\sigma$ is indeed symmetric.
We note that the first equation in \eqref{eq:LE} can also be written as
$$-\Divs\L\nas u+\rho\,u=f.$$
The mixed formulation of \eqref{eq:LE} reads: Find $(u,\sigma)\in\hogad\times\dgan$ such that
\begin{equation} \label{eq:LEMixed}
-\Div\sigma+\rho\,u=f,\quad\L\nas u=\sigma \qquad \textrm{in } \om.
\end{equation}
For the norms we have
\begin{align*}
\norm{u}_{\ho,\rho,\L}^2
&=\norm{u}_{\lt,\rho}^2
+\norm{\nas u}_{\lt,\L}^2,\\
\norm{\sigma}_{\sym^{-1}\d,\rho^{-1},\L^{-1}}^2
&=\norm{\sigma}_{\lt,\L^{-1}}^2
+\norm{\Divs\sigma}_{\lt,\rho^{-1}}^2,\\
\tnorm{(u,\sigma)}^2
&=\norm{u}_{\ho,\rho,\L}^2
+\norm{\sigma}_{\sym^{-1}\d,\rho^{-1},\L^{-1}}^2.
\end{align*}
Now Theorem \ref{thm:Gmain} reads:

\begin{theo} 
\label{thm:LE}
Let $(u,\sigma),(\ut,\tilde{\sigma})\in\hogad\times\sym^{-1}\dgan$ 
be the exact solution of \eqref{eq:LEMixed}
and any approximation, respectively. Then
\begin{align*}
\tnorm{(u,\sigma)-(\ut,\tilde{\sigma})}^2
=\Mle(\ut,\tilde{\sigma}),\quad
\frac{\tnorm{(u,\sigma)-(\ut,\tilde{\sigma})}^2}{\tnorm{(u,\sigma)}^2}
=\frac{\Mle(\ut,\tilde{\sigma})}{\norm{f}_{\lt,\rho^{-1}}^2}
\end{align*}
hold, where $\Mle(\ut,\tilde{\sigma})=\norm{f-\rho\,\ut+\Divs\tilde{\sigma}}_{\lt,\rho^{-1}}^2
+\norm{\tilde{\sigma}-\L\nas\ut}_{\lt,\L^{-1}}^2$.
Moreover, since the tensor $\sigma$ is symmetric the above results hold for all pairs
$(\ut,\tilde{\sigma})\in\hogad\times\dgan$ with symmetric tensor $\tilde{\sigma}$,
and the functional simplifies to
$\Mle(\ut,\tilde{\sigma})=\norm{f-\rho\,\ut+\Div\tilde{\sigma}}_{\lt,\rho^{-1}}^2
+\norm{\tilde{\sigma}-\L\nas\ut}_{\lt,\L^{-1}}^2$.
\end{theo}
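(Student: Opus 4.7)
The plan is to read off Theorem \ref{thm:LE} as a direct instance of the abstract Theorem \ref{thm:Gmain}, using the dictionary in the table preceding the statement. So the first step is to verify that the concrete data of linear elasticity fit the abstract hypotheses of Section \ref{sec:G}. Specifically, I would check that $\nas : \hogad \subset \lt \to \lt$ is densely defined and closed with adjoint $-\Divs$ on the domain $\sym^{-1}\dgan$; this is exactly the characterization spelled out just above the theorem via the identity $\scplt{\nas\varphi}{\psi} = \scplt{\na\varphi}{\sym\psi}$, which shows $\psi \in D(\As)$ iff $\sym\psi \in \dgan$, with $\As\psi = -\Divs\psi = -\Div\sym\psi$. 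The coefficient tensors $\rho$ and $\L$ are assumed symmetric, real valued, and uniformly positive definite in $\li$, so they are self adjoint positive topological isomorphisms of $\lt$, $\L$ interpreted on symmetric tensors and extended trivially so that the abstract setting applies.

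Once this identification is made, the first assertion of the theorem, namely the equality
\[
\tnorm{(u,\sigma)-(\ut,\tilde{\sigma})}^2 = \Mle(\ut,\tilde{\sigma})
\]
with $\Mle(\ut,\tilde{\sigma}) = \norm{f - \rho\,\ut + \Divs\tilde{\sigma}}_{\lt,\rho^{-1}}^2 + \norm{\tilde{\sigma} - \L\nas\ut}_{\lt,\L^{-1}}^2$, is immediate from \eqref{eq:Gmain1} and \eqref{eq:Gmain} applied to $(x,y) = (u,\sigma)$ and $(\xt,\yt) = (\ut,\tilde{\sigma})$. The normalized counterpart follows similarly from \eqref{eq:Gmain2} together with the isometry property $\tnorm{(u,\sigma)} = \norm{f}_{\lt,\rho^{-1}}$ inherited from Remark \ref{rem:isometrygen}.

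The only step that requires a small additional argument beyond invoking Theorem \ref{thm:Gmain} is the final sentence, where the functional is allowed to be evaluated at $(\ut,\tilde{\sigma}) \in \hogad \times \dgan$ provided $\tilde\sigma$ is symmetric, and is shown to simplify by replacing $\Divs\tilde{\sigma}$ with $\Div\tilde{\sigma}$. For this I would simply note that if $\tilde{\sigma}$ is symmetric then $\sym\tilde{\sigma} = \tilde{\sigma}$, so $\tilde{\sigma} \in \sym^{-1}\dgan$ is equivalent to $\tilde{\sigma} \in \dgan$, and in that case $\Divs\tilde{\sigma} = \Div\sym\tilde{\sigma} = \Div\tilde{\sigma}$. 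Thus the enlarged admissible set already lies in $D(\As)$, and the two versions of the functional coincide.

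The main (minor) obstacle is really the bookkeeping around the symmetric-divergence operator: one must be careful that $\As = -\Divs$ (and not $-\Div$) is the correct adjoint of $\nas$ on the full space $\lt$ of (not necessarily symmetric) matrix fields, and that the exact solution $\sigma = \L\nas u$ is automatically symmetric since $\L$ preserves symmetric matrices. Once this is set up, no further calculation is needed: both the equality \eqref{eq:Gmain1} and the normalized version \eqref{eq:Gmain2} transfer verbatim, so the proof is essentially a one-line reduction to Theorem \ref{thm:Gmain} plus the symmetry observation for the last claim.
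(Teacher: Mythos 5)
Your proposal is correct and follows exactly the route the paper intends: the applications section states Theorem \ref{thm:LE} as a direct instantiation of Theorem \ref{thm:Gmain} via the dictionary $\A=\nas$, $\As=-\Divs$, $D(\As)=\sym^{-1}\dgan$, $\alphao=\rho$, $\alphat=\L$, with the normalized identity coming from Remark \ref{rem:isometrygen}. Your added care about the adjoint identification $\scplt{\nas\varphi}{\psi}=\scplt{\na\varphi}{\sym\psi}$ and the symmetry argument for the final simplification ($\sym\tilde{\sigma}=\tilde{\sigma}$ gives $\Divs\tilde{\sigma}=\Div\tilde{\sigma}$) is exactly the bookkeeping the paper leaves implicit.
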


\subsubsection*{Case II}

Find the displacement vector field $u\in\ho$ such that
\begin{align}
-\Div\L\nas u+i\omega\rho\,u&=f&
\textrm{in }&\om,\nonumber\\
u&=0&
\textrm{on }&\gad,\label{eq:LEi}\\
\L\nas u \cdot n&=0&
\textrm{on }&\gan,\nonumber
\end{align}
where $\L,\rho$, and $f$ are as before, and $\omega \in \realsnotz$.
The dual variable for this problem is the Cauchy stress tensor $\sigma=\L\nas u\in\d$.
We note again that $\sigma$ is symmetric,
and that the first equation of \eqref{eq:LEi} can also be written as
$$-\Divs\L\nas u+i\omega\rho\,u=f.$$
The mixed formulation of \eqref{eq:LEi} reads: Find $(u,\sigma)\in\hogad\times\dgan$ such that
\begin{equation} \label{eq:LEiMixed}
-\Div\sigma+i\omega\rho\,u=f,\quad\L\nas u=\sigma \qquad \textrm{in } \om.
\end{equation}
For the norms we have
\begin{align*}
\norm{u}_{\ho,\norm{\omega}\rho,\L}^2
&=\norm{u}_{\lt,\norm{\omega}\rho}^2
+\norm{\nas u}_{\lt,\L}^2,\\
\norm{\sigma}_{\sym^{-1}\d,(\norm{\omega}\rho)^{-1},\L^{-1}}^2
&=\norm{\sigma}_{\lt,\L^{-1}}^2
+\norm{\Divs\sigma}_{\lt,(\norm{\omega}\rho)^{-1}}^2,\\
\ttnorm{(u,\sigma)}^2
&=\norm{u}_{\ho,\norm{\omega}\rho,\L}^2
+\norm{\sigma}_{\sym^{-1}\d,(\norm{\omega}\rho)^{-1},\L^{-1}}^2.
\end{align*}
Now Theorem \ref{thm:Gmain2} reads:

\begin{theo} 
\label{thm:LEi}
Let $(u,\sigma),(\ut,\tilde{\sigma})\in\hogad\times\sym^{-1}\dgan$ 
be the exact solution of \eqref{eq:LEiMixed}
and any approximation, respectively. Then
$$\frac{\sqrt{2}}{\sqrt{2}+1} \Mlei(\ut,\tilde{\sigma})
\leq
\ttnorm{(u,\sigma)-(\ut,\tilde{\sigma})}^2
\leq
\frac{\sqrt{2}}{\sqrt{2}-1} \Mlei(\ut,\tilde{\sigma})$$
and
$$\frac{\sqrt{2}}{2(\sqrt{2}+1)} \cdot \frac{\Mlei(\ut,\tilde{\sigma})}{\norm{f}_{\lt,(\norm{\omega}\rho)^{-1}}^2}
\leq
\frac{\ttnorm{(u,\sigma)-(\ut,\tilde{\sigma})}^2}{\ttnorm{(u,\sigma)}^2}
\leq
\frac{\sqrt{2}}{\sqrt{2}-1} \cdot \frac{\Mlei(\ut,\tilde{\sigma})}{\norm{f}_{\lt,(\norm{\omega}\rho)^{-1}}^2}$$
hold, where $\Mlei(\ut,\tilde{\sigma})=\norm{f-i\omega\rho\,\ut+\Divs\tilde{\sigma}}_{\lt,(\norm{\omega}\rho)^{-1}}^2
+\norm{\tilde{\sigma}-\L\nas\ut}_{\lt,\L^{-1}}^2$.
Moreover, since the tensor $\sigma$ is symmetric the above results hold for all pairs
$(\ut,\tilde{\sigma})\in\hogad\times\dgan$ with symmetric $\tilde{\sigma}$,
and the functional simplifies to
$\Mlei(\ut,\tilde{\sigma})=\norm{f-i\omega\rho\,\ut+\Div\tilde{\sigma}}_{\lt,(\norm{\omega}\rho)^{-1}}^2
+\norm{\tilde{\sigma}-\L\nas\ut}_{\lt,\L^{-1}}^2$.
\end{theo}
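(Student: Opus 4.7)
The plan is to recognize Theorem \ref{thm:LEi} as a direct specialization of the abstract two-sided estimate Theorem \ref{thm:Gmain2}, using the dictionary provided in the table above the statement, namely
$\alphao = \rho$, $\alphat = \L$, $\A = \nas$, $\As = -\Divs$, $\hio = \hit = \lt$, and $D(\A) = \hogad$, $D(\As) = \sym^{-1}\dgan$. First I would verify that this dictionary is consistent: symmetry, positivity and the $\li$-isomorphism property of $\rho$ and $\L$ give the required hypotheses on $\alphao,\alphat$; $\nas:\hogad\subset\lt\to\lt$ is densely defined and closed; and the integration-by-parts identity \eqref{partint} was already checked in the paragraph above the statement via $\scplt{\nas\varphi}{\psi} = \scplt{\na\varphi}{\sym\psi} = -\scplt{\varphi}{\Div\sym\psi} = -\scplt{\varphi}{\Divs\psi}$ for $\varphi\in\hogad$ and $\psi\in\sym^{-1}\dgan$.

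With this dictionary in place, the mixed system \eqref{eq:LEiMixed} is exactly \eqref{mixedformulation2}, and the weighted norms $\norm{\,\cdot\,}_{\ho,\norm{\omega}\rho,\L}$, $\norm{\,\cdot\,}_{\sym^{-1}\d,(\norm{\omega}\rho)^{-1},\L^{-1}}$, and $\ttnorm{(\,\cdot\,,\,\cdot\,)}$ defined above Theorem \ref{thm:LEi} coincide with $\normdaidomaoat{\,\cdot\,}$, $\normdasomaomoatmo{\,\cdot\,}$, and $\ttnorm{(\,\cdot\,,\,\cdot\,)}$ from Section \ref{subsec:Gcase2}. Therefore Theorem \ref{thm:Gmain2} applied with these choices yields, for any $(\ut,\tilde{\sigma})\in\hogad\times\sym^{-1}\dgan$, exactly the two inequalities
$$\frac{\sqrt{2}}{\sqrt{2}+1}\Mlei(\ut,\tilde{\sigma})\leq\ttnorm{(u,\sigma)-(\ut,\tilde{\sigma})}^2\leq\frac{\sqrt{2}}{\sqrt{2}-1}\Mlei(\ut,\tilde{\sigma})$$
with $\Mlei(\ut,\tilde{\sigma}) = \normhioomaomo{f-i\omega\rho\,\ut-\As\tilde{\sigma}}^2 + \normhitatmo{\tilde{\sigma}-\L\nas\ut}^2 = \norm{f-i\omega\rho\,\ut+\Divs\tilde{\sigma}}_{\lt,(\norm{\omega}\rho)^{-1}}^2 + \norm{\tilde{\sigma}-\L\nas\ut}_{\lt,\L^{-1}}^2$. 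The normalized counterpart follows from the second line of Theorem \ref{thm:Gmain2}.

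The one genuine point to address — and the only place where this statement is not a pure re-labeling — is the final claim that the estimates remain valid for arbitrary $(\ut,\tilde{\sigma})\in\hogad\times\dgan$ whose tensor component $\tilde{\sigma}$ is symmetric, with the functional simplifying so that $\Divs\tilde{\sigma}$ is replaced by $\Div\tilde{\sigma}$. For this I would argue as follows: if $\tilde{\sigma}\in\dgan$ is symmetric, then $\sym\tilde{\sigma} = \tilde{\sigma}\in\dgan$, so by definition $\tilde{\sigma}\in\sym^{-1}\dgan = D(\As)$, and $\Divs\tilde{\sigma} = \Div\sym\tilde{\sigma} = \Div\tilde{\sigma}$. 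Hence such pairs are admissible in the first part of the theorem and the two residual terms take the stated simplified form. I do not foresee a genuine obstacle here; the bulk of the work was already done in the abstract setting of Theorem \ref{thm:Gmain2}, and the only care required is the bookkeeping for the domain $\sym^{-1}\dgan$ of $\As = -\Divs$ and the observation that symmetry of $\tilde{\sigma}$ automatically places it in this domain whenever $\tilde{\sigma}\in\dgan$.
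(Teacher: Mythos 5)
Your proposal is correct and follows exactly the route the paper intends: Theorem \ref{thm:LEi} is a direct instantiation of the abstract Theorem \ref{thm:Gmain2} via the dictionary $\alphao=\rho$, $\alphat=\L$, $\A=\nas$, $\As=-\Divs$, $D(\A)=\hogad$, $D(\As)=\sym^{-1}\dgan$, with the integration-by-parts identity already verified in the text preceding the statement. Your handling of the final claim --- that a symmetric $\tilde{\sigma}\in\dgan$ satisfies $\sym\tilde{\sigma}=\tilde{\sigma}\in\dgan$, hence lies in $\sym^{-1}\dgan$ with $\Divs\tilde{\sigma}=\Div\tilde{\sigma}$ --- is precisely the observation the paper relies on.
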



\subsection{Different Boundary Conditions and Other Problems}

We note that the (non-normalized) error equalities 
and error estimates hold without change with non-homogenous boundary conditions. 
Also Robin boundary conditions can be treated (see Appendix \ref{app:inhomobc}).

It is clear that the list of applications of our theory is much longer. For example:
\begin{itemize}
\item 
generalized reaction-diffusion, linear accoustics and electromagnetics 
on Riemannian manifolds\footnote{Here $\ed$ and $\delta$ denote the exterior and co-derivative, respectively.}
$$-\delta\ed+1,\quad
-\delta\ed+i$$
\item 
the fourth order problem 
$$\div\Div\na\na+1,\quad
\div\Div\na\na+i$$
\item 
the biharmonic problem 
$$\Delta\Delta+1,\quad
\Delta\Delta+i$$
\item 
certain generalized Stokes and Oseen type problems
\end{itemize}



\bibliographystyle{plain} 
\bibliography{/Users/paule/Library/texmf/tex/TeXinput/bibtex/paule}


\appendix

\section{Inhomogeneous and More Boundary Conditions}
\label{app:inhomobc}

We will demonstrate that our results also hold 
for Robin type boundary conditions, which means that our results are true
for many commonly used boundary conditions. Moreover, we emphasize that we can also handle
inhomogeneous boundary conditions.
Since it is clear that this method works in the general setting for both Cases I and II, 
we will demonstrate it here just for a simple reaction-diffusion type model problem
belonging to the class of Case I.
Let $\om$ be as in the latter section and now the boundary $\ga$ 
be decomposed into three disjoint parts $\gad$, $\gan$ and $\gar$.

The model problem is:
Find the scalar potential $u\in\ho$ such that
\begin{align*}
-\div\na u+u&=f&\textrm{in }&\om,\\
u&=g_1&\textrm{on }&\gad,\\
n\cdot\na u&=g_2&\textrm{on }&\gan,\\
n\cdot\na u+\gamma u&=g_3&\textrm{on }&\gar
\end{align*}
hold. Hence, on $\gad, \gan$ and $\gar$ we impose Dirichlet, Neumann 
and Robin type boundary conditions, respectively. 
In the Robin boundary condition, we assume that the coefficient $\gamma\ge\gamma_0>0$ 
belongs to $\li$. The dual variable for this problem 
is the flux $p:=\na u\in\d$.
Furthermore, as long as $\gar\neq\emptyset$ and
to avoid tricky discussions about traces 
and the corresponding $\hmoh$-spaces of $\ga$, $\gad, \gan$, and $\gar$,
which can be quite complicated, we assume for simplicity that
$u\in\htwo$. Then, $p\in\ho$ and all $g_{i}$ belong to $\lt$ even to $\hoh$ of $\ga$.
For the norms we simply have
$$\tnorm{(u,p)}^2
=\normho{u}^2
+\normd{p}^2.$$

\begin{theo}
\label{thm:dnr}
For any approximation pair $(\ut,\pt)\in\htwo\times\ho$ with
$u-\ut\in\hogad$ and $p-\pt\in\dgan$ as well as
$n\cdot(p-\pt)+\gamma(u-\ut)=0$ on $\gar$
$$\tnorm{(u,p)-(\ut,\pt)}^2
+2\norm{u-\ut}_{\lt(\gar),\gamma}^2
=\M(\ut,\pt)$$
holds with $\M(\ut,\pt) := \normlt{f-\ut+\div \pt}^2 + \normlt{\pt-\na \ut}^2$.
Moreover, $\norm{u-\ut}_{\lt(\gar),\gamma}=\norm{n\cdot(p-\pt)}_{\lt(\gar),\gamma^{-1}}$.
\end{theo}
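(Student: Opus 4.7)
The plan is to mimic the algebraic manipulation used in the proof of Theorem \ref{thm:Gmain}, but to keep track of the boundary terms arising from integration by parts, which will not vanish on $\gar$. Concretely, I would start by expanding $\M(\ut,\pt)$ after inserting the exact solution: using the strong equation $f = u - \div p$ and the constitutive law $p = \na u$, one rewrites
\begin{align*}
f - \ut + \div\pt &= (u-\ut) - \div(p-\pt), \\
\pt - \na\ut &= -(p-\pt) + \na(u-\ut).
\end{align*}
Squaring both, summing, and separating the pure square terms yields the combined graph norm $\tnorm{(u,p)-(\ut,\pt)}^2 = \normho{u-\ut}^2 + \normd{p-\pt}^2$, plus two cross terms of the form $-2\Re\scplt{u-\ut}{\div(p-\pt)} - 2\Re\scplt{p-\pt}{\na(u-\ut)}$.

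Next I would apply Green's identity to these cross terms. Since $u - \ut \in \htwo$ and $p-\pt \in \ho$, the traces of $u-\ut$ on $\ga$ and of $n\cdot(p-\pt)$ on $\ga$ exist in a strong sense, and one obtains
\[
\scplt{p-\pt}{\na(u-\ut)} + \scplt{u-\ut}{\div(p-\pt)} = \int_{\ga}(u-\ut)\,\overline{n\cdot(p-\pt)}.
\]
The assumption $u-\ut \in \hogad$ kills the boundary integral over $\gad$, and $p-\pt \in \dgan$ kills the contribution over $\gan$. On $\gar$ the compatibility condition $n\cdot(p-\pt) = -\gamma(u-\ut)$ gives
\[
\int_{\gar}(u-\ut)\,\overline{n\cdot(p-\pt)} = -\int_{\gar}\gamma\,|u-\ut|^2 = -\norm{u-\ut}_{\lt(\gar),\gamma}^2,
\]
using that $\gamma$ is real. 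Hence the cross terms contribute $+2\norm{u-\ut}_{\lt(\gar),\gamma}^2$ to $\M(\ut,\pt)$, which yields the claimed equality after rearrangement.

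The second identity follows directly: squaring $n\cdot(p-\pt) = -\gamma(u-\ut)$ on $\gar$ and weighting with $\gamma^{-1}$ gives
\[
\norm{n\cdot(p-\pt)}_{\lt(\gar),\gamma^{-1}}^2 = \int_{\gar}\gamma^{-1}\gamma^2\,|u-\ut|^2 = \norm{u-\ut}_{\lt(\gar),\gamma}^2.
\]
The only delicate point is justifying the integration by parts on $\gar$, i.e.\ making sense of the boundary pairing; this is precisely why the regularity hypothesis $u\in\htwo$, $p\in\ho$ (and the corresponding regularity of $\ut,\pt$) has been imposed, so that the traces on the individual boundary pieces are honest $\lt(\gar)$ functions rather than only $\hmoh$ distributions. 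Everything else is routine algebra.
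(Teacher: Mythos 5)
Your proposal is correct and follows essentially the same route as the paper's proof: expand $\M(\ut,\pt)$ using the strong equations, integrate the cross terms by parts, kill the $\gad$ and $\gan$ contributions via the conformity assumptions, and evaluate the remaining $\gar$ term with the Robin compatibility condition $n\cdot(p-\pt)=-\gamma(u-\ut)$. The sign bookkeeping works out as you state (the conjugate-symmetry issue in the Green identity is harmless since only real parts enter), and the final identity between the two weighted boundary norms is obtained exactly as in the paper.
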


\begin{proof}
Following the proof of Theorem \ref{thm:Gmain} we have
\begin{align*}
\M(\ut,\pt)
&=\ubr{\normho{u-\ut}^2
+\normd{p-\pt}^2}_{\ds=\tnorm{(u,p)-(\ut,\pt)}^2}
+2\Re\scplt{u-\ut}{\div(\pt-p)}
+2\Re\scplt{\na(u-\ut)}{\pt-p}.
\end{align*}
Moreover, since $n\cdot(\pt-p)$ and $u-\ut$ belong to $\lt(\ga)$ we have 
\begin{align}
&\scplt{\na(u-\ut)}{\pt-p}
+\scplt{u-\ut}{\div(\pt-p)} \nonumber \\
&=\scp{n\cdot(\pt-p)}{u-\ut}_{\lt(\ga)}
=\scp{n\cdot(\pt-p)}{u-\ut}_{\lt(\gar)}
=\scp{\gamma(u-\ut)}{u-\ut}_{\lt(\gar)}. \label{eq:partintrobin}
\end{align}
As $\scp{\gamma(u-\ut)}{u-\ut}_{\lt(\gar)}=\scp{\gamma^{-1}n\cdot(p-\pt)}{n\cdot(p-\pt)}_{\lt(\gar)}$
we get the assertion.
\end{proof}

\begin{rem}
If all $g_{i}=0$, we can set $(\ut,\pt)=(0,0)$ and get
$$\tnorm{(u,p)}^2
+2\norm{u}_{\lt(\gar),\gamma}^2
=\normlt{f}^2,$$
which follows also by
\begin{align*}
\normlt{f}^2
&=\normlt{\div p}^2
+\normlt{u}^2
-2\Re\scplt{\div\na u}{u}\\
&=\normlt{\div p}^2
+\normlt{u}^2
+2\normlt{\na u}
-2\Re\scp{n\cdot\na u}{u}_{\lt(\ga)}\\
&=\normlt{\div p}^2
+\normlt{u}^2
+2\normlt{\na u}
-2\Re\ubr{\scp{n\cdot\na u}{u}_{\lt(\gar)}}_{\ds=-\norm{u}_{\lt(\gar),\gamma}^2}.
\end{align*}
Thus, in this case the assertion of Theorem \ref{thm:dnr} has a normalized counterpart as well.
\end{rem}

If $\gar=\emptyset$ we have a pure mixed Dirichlet and Neumann boundary.

\begin{theo} 
\label{thm:dnrnorobin}
Let $\gar=\emptyset$.
For any approximation $(\ut,\pt)\in\ho\times\d$ with
$u-\ut\in\hogad$ and $p-\pt\in\dgan$ we have
$$\tnorm{(u,p)-(\ut,\pt)}^2
=\M(\ut,\pt).$$
\end{theo}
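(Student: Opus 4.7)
The plan is to follow the proof of Theorem~\ref{thm:Gmain} in the concrete setting $\A=\na$, $\As=-\div$, being careful to use \eqref{partint} only on the error, which is where the hypotheses on $u-\ut$ and $p-\pt$ come in. First, I would substitute the strong equations $f=u-\div p$ and $p=\na u$ into
\begin{equation*}
\M(\ut,\pt)=\normlt{f-\ut+\div\pt}^2+\normlt{\pt-\na\ut}^2
\end{equation*}
to rewrite it as $\normlt{(u-\ut)+\div(\pt-p)}^2+\normlt{(\pt-p)+\na(u-\ut)}^2$. Expanding the two squared norms produces $\normho{u-\ut}^2+\normd{p-\pt}^2=\tnorm{(u,p)-(\ut,\pt)}^2$ together with the cross terms
\begin{equation*}
2\Re\scplt{u-\ut}{\div(\pt-p)}+2\Re\scplt{\na(u-\ut)}{\pt-p}.
\end{equation*}

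The key step is then to cancel these cross terms. Since by hypothesis $u-\ut\in\hogad=D(\A)$ and $\pt-p\in\dgan=D(\As)$, the partial integration identity \eqref{partint} gives
\begin{equation*}
\scplt{\na(u-\ut)}{\pt-p}=-\scplt{u-\ut}{\div(\pt-p)},
\end{equation*}
so the two cross terms annihilate each other and the identity $\M(\ut,\pt)=\tnorm{(u,p)-(\ut,\pt)}^2$ follows.

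The main subtle point, which is essentially the only substantive step, is that although neither $\ut$ nor $\pt$ individually need lie in $\hogad$ or $\dgan$ (indeed they realize the inhomogeneous Dirichlet/Neumann data of the original problem), the error pair $(u-\ut,p-\pt)$ does lie in $\hogad\times\dgan$ by assumption, and this is precisely what is required to eliminate the boundary contribution coming from integration by parts. By contrast, in the Robin setting of Theorem~\ref{thm:dnr} the boundary integral on $\gar$ survived as the additional term $\scp{\gamma(u-\ut)}{u-\ut}_{\lt(\gar)}$ in \eqref{eq:partintrobin}; here with $\gar=\emptyset$ that contribution is absent, yielding a pure equality without any boundary remainder.
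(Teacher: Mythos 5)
Your proposal is correct and follows essentially the same route as the paper: the paper obtains Theorem~\ref{thm:dnrnorobin} as the $\gar=\emptyset$ case of the computation in the proof of Theorem~\ref{thm:dnr}, which is exactly the expansion of $\M(\ut,\pt)$ via the strong equations followed by cancellation of the cross terms through \eqref{partint} applied to the error pair $(u-\ut,\pt-p)\in\hogad\times\dgan$. Your emphasis on the fact that only the \emph{error} needs to satisfy the homogeneous boundary conditions is precisely the point the paper is making in this appendix.
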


\begin{cor}
Let $\gar=\emptyset$.
Theorem \ref{thm:dnrnorobin} provides the well known a posteriori error estimates 
for the primal and dual problems. 
\begin{itemize}
\item[\bf(i)]
For any $\ut\in\ho$ with $u-\ut\in\hogad$ it holds
$\ds\normho{u-\ut}^2
=\min_{\substack{\psi\in\d\\p-\psi\in\dgan}}\M(\ut,\psi)
=\M(\ut,p)$.
\item[\bf(ii)]
For any $\pt\in\d$ with $p-\pt\in\dgan$ it holds
$\ds\normd{p-\pt}^2
=\min_{\substack{\varphi\in\ho\\u-\varphi\in\hogad}}\M(\varphi,\pt)
=\M(u,\pt)$.
\end{itemize}
\end{cor}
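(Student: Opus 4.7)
The plan is to mirror the proof of Corollary \ref{cor:maintheocor}, exploiting the clean splitting $\tnorm{(u,p)-(\ut,\pt)}^2 = \normho{u-\ut}^2 + \normd{p-\pt}^2$ available in the pure Dirichlet/Neumann case $\gar=\emptyset$, together with the error equality of Theorem \ref{thm:dnrnorobin}.

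For part (i), I would start by dropping the nonnegative dual contribution: for every admissible $\psi\in\d$ with $p-\psi\in\dgan$, Theorem \ref{thm:dnrnorobin} applied to the pair $(\ut,\psi)$ yields
$$\normho{u-\ut}^2 \le \normho{u-\ut}^2 + \normd{p-\psi}^2 = \M(\ut,\psi).$$
Since the left hand side does not depend on $\psi$, the choice $\psi\in\{\varphi\in\d : p-\varphi\in\dgan\}$ is free and one gets $\normho{u-\ut}^2\le\inf_\psi \M(\ut,\psi)$. To show that this infimum is in fact a minimum attained at $\psi=p$ (which is admissible since $p-p=0\in\dgan$), I would substitute the strong form $-\div p+u=f$, $p=\na u$ into $\M(\ut,p)=\normlt{f-\ut+\div p}^2+\normlt{p-\na\ut}^2$. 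This gives $f-\ut+\div p=u-\ut$ and $p-\na\ut=\na(u-\ut)$, hence $\M(\ut,p)=\normlt{u-\ut}^2+\normlt{\na(u-\ut)}^2=\normho{u-\ut}^2$, closing the chain of inequalities into an equality.

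Part (ii) is completely symmetric. Dropping the primal term in the error equality gives $\normd{p-\pt}^2\le \M(\varphi,\pt)$ for every admissible $\varphi\in\ho$ with $u-\varphi\in\hogad$, so $\normd{p-\pt}^2\le\inf_\varphi \M(\varphi,\pt)$. Choosing $\varphi=u$ (admissible since $u-u=0\in\hogad$) and using once more the strong form, one obtains $f-u+\div\pt=\div(\pt-p)$ and $\pt-\na u=\pt-p$, so $\M(u,\pt)=\normlt{\div(\pt-p)}^2+\normlt{\pt-p}^2=\normd{p-\pt}^2$, and the minimum is realized.

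I do not foresee any real obstacle here; the argument is essentially bookkeeping on top of Theorem \ref{thm:dnrnorobin}. The only points to verify carefully are that $\psi=p$ and $\varphi=u$ lie in the respective admissible sets (both trivially so, as the relevant differences vanish) and that the strong form of the PDE can be used to evaluate $\M$ at the exact solution components, which is standard.
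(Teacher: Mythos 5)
Your proposal is correct and follows essentially the same route as the paper's proof of the analogous general Corollary \ref{cor:maintheocor}: drop the nonnegative term in the error equality, observe the left side is independent of the dropped variable, and verify attainment at the exact solution component via the strong form. The extra care you take to check that $\psi=p$ and $\varphi=u$ lie in the affine admissible sets is a sensible addition for the inhomogeneous setting but does not change the argument.
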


\end{document}